\newtheorem{theorem}{Theorem}
\newtheorem{lemma}{Lemma}
\newtheorem{corollary}{Corollary}
\newtheorem{conjecture}{Conjecture}
\newtheorem{remark}{Remark} 
\newenvironment{proof}[1][Proof]{\begin{trivlist}
\item[\hskip \labelsep {\bfseries #1}]}{\end{trivlist}}
\newcommand{\qed}{\nobreak \ifvmode \relax \else
      \ifdim\lastskip<1.5em \hskip-\lastskip
      \hskip1.5em plus0em minus0.5em \fi \nobreak
      \vrule height0.75em width0.5em depth0.25em\fi}
\renewcommand{\Re}{\operatorname{Re}}
\begin{document}

\title{A Theory of Intermittency Renormalization of Gaussian Multiplicative Chaos Measures}
\author{Dmitry Ostrovsky}

\date{}

\maketitle
\noindent

\begin{abstract}
\noindent
A theory of intermittency differentiation is developed for a general class of Gaussian Multiplicative Chaos measures
including the measure of Bacry and Muzy on the interval and circle as special cases. 
An exact, non-local functional equation is derived for the derivative of a general functional of the total mass of the measure with respect to
intermittency. 
The formal solution is given 
in the form of an intermittency expansion
and proved to be a renormalized expansion in the centered moments of the total mass of the measure. The full intermittency expansion
of the Mellin transform of the total mass is computed in terms of the corresponding expansion of log-moments.
The theory is shown to extend to the dependence structure of the measure. For application, the intermittency expansion of 
the Bacry-Muzy measure on the circle is computed exactly, and the Morris integral probability distribution is shown
to reproduce the moments of the total mass and the intermittency expansion, resulting in the conjecture that it is
the distribution of the total mass. It is conjectured in general that the intermittency expansion 
captures the distribution of the total mass uniquely. 
\end{abstract}
{\bf Keywords:} Gaussian multiplicative chaos, high temperature expansion, intermittency differentiation, Mellin transform, Bacry-Muzy measure.

\section{Introduction}
\noindent The theory of Gaussian Multiplicative Chaos (GMC) measures has greatly advanced since its inception in 1972 by Mandelbrot \cite{secondface}, 
who introduced the key ingredients of what is now known as GMC under the name of the limit lognormal measure, cf. also his review \cite{Lan}.
The mathematical foundation of the subject was laid down by Kahane \cite{K2}, who created a comprehensive, mathematically rigorous theory of multiplicative 
chaos measures based on his theory of convergence of a particular class of positive martingales. The theory was advanced further
around 2000 with the introduction of the conical set construction by Barral and Mandelbrot \cite{Pulses} and Schmitt and Marsan \cite{Schmitt} and assumed its modern form with
the theory of infinitely divisible multiplicative chaos measures of Bacry and Muzy \cite{BM1}, \cite{BM} that is based on a spectral representation of
infinitely divisible processes of Rajput and Rosinski \cite{RajRos}. The theory of Bacry and Muzy was limited to multiplicative chaos on a finite interval.
It has since been extended  in the gaussian case to multiple dimensions by Robert and Vargas \cite{RV}, who also relaxed Kahane's $\sigma-$positivity 
condition and proved universality, to other geometric shapes such as the circle by Fyodorov and Bouchaud \cite{FyoBou} and Astala \emph{et. al} \cite{Jones}, as well as to critical multiplicative chaos by Duplantier \emph{et. al} \cite{dupluntieratal} and Barral \emph{et. al.} \cite{barraletal}, and most recently to super-critical multiplicative chaos by Madaule \emph{et. al.} \cite{Mad}. Most recently, Berestycki \cite{B}, Junnila and Saksman \cite{JS}, and Shamov \cite{Shamov} found new 
re-formulations and further extended the existing theory. 
In the general  infinitely divisible case the theory of Bacry and Muzy was further advanced by Rhodes and Vargas \cite{RV3} and Barral and Jin \cite{barralID},
and we derived key invariance properties of the underlying infinitely divisible field in \cite{Me5} and a formula for the moments of the total mass in \cite{MeLMP}. 

The interest in multiplicative chaos derives from its remarkable property of multifractality, from complexity of
mathematical problems that it poses such as understanding its stochastic dependence structure, and from the many applications in 
theoretical and statistical physics, in which it naturally appears.  Without aiming for comprehension, we can mention applications to
conformal field theory and quantum gravity \cite{BenSch}, \cite{DS}, \cite{RV1}, statistical mechanics of disordered energy landscapes and extrema of the 2D gaussian free field \cite{FyoBou}, \cite{FLDR},  \cite{FLDR2}, \cite{FyodSimm}, \cite{Madmax}, \cite{Me16}, a theory of conformal weldings \cite{Jones}, and even conjectured \cite{YK}, \cite{Menon} and some rigorous \cite{SW} applications to the behavior of the Riemann zeta function on the critical line. 

A fundamental open problem in the theory of GMC is to calculate the distribution of the total mass of the chaos measure and, more generally,
understand its stochastic dependence structure, \emph{i.e.} the joint distribution of the measure of several subsets of the set, on which it is defined. 
The contribution of this paper is to advance this problem 
for the class of multi-dimensional GMC measures introduced by Robert and Vargas \cite{RV}. 
These measures have the property that the positive integer moments of its total mass are known in the form of a multiple integral of Selberg type. 
For example, the moments of the total mass of the Bacry-Muzy GMC measure are given by the classical Selberg integral \cite{MRW} on the interval 
and by the Morris integral on the circle \cite{FyoBou}, \cite{Me16}. 
The primary challenge of 
recovering the distribution from the moments is that the moments become infinite at any level of intermittency (also referred to as the inverse temperature 
in the statistical physics literature). Hence, the problem of recovering the distribution from the moments is that of renormalization, \emph{i.e.} of removing infinity 
from the moments and re-summing them so as to reconstruct the distribution. In the special case of the Bacry-Muzy GMC measure on the interval we 
developed in a series of papers \cite{Me2}--\cite{MeIMRN}  the theory of intermittency differentiation that allowed us to compute the full high-temperature (low intermittency) expansion of the Mellin transform of the total mass and effectively reconstruct the Mellin transform by summing the intermittency expansion. We then
checked that the resulting expression is the Mellin transform of a valid probability distribution, known as the Selberg integral probability distribution, 
having the properties that its positive integer moments are given by the Selberg integral and that the asymptotic expansion of its Mellin transform coincides 
with the intermittency expansion. Thus, we constructed a good candidate for the distribution of the total mass in the sub-critical regime and then calculated 
the critical distribution as a simple limiting case, cf. \cite{Me14}, \cite{Me16}. 

It is worth emphasizing that the main achievement of the intermittency differentiation approach is that it provides an exact mechanism for renormalization.
It is possible in some cases to guess the Mellin transform, in the sense of a function of a complex variable whose restriction to the finite interval of positive integers, where the moments are finite, coincides with the moments, cf. \cite{CRS}, \cite{FyoBou}, \cite{FLDR}, \cite{FLD}, \cite{Me16}. While this method 
produces the same formulas for the Mellin transform of the total mass of the Bacry-Muzy GMC measure on the interval and circle as ours, 
it does not capture the distribution uniquely because it operates on the moments directly and the moment problem is not determinate. 

The contribution of this paper is threefold. 
Our main contribution is to extend the theory of intermittency renormalization to a general multi-dimensional GMC measure.
The GMC measure is defined as the exponential functional of a regularized gaussian process with logarithmic covariance in the limit of zero 
regularization. The theory that is developed in this paper is applicable to the exponential functional of a properly normalized singular gaussian process
in general and so is not limited to GMC measures \emph{per se}. However, logarithmically-correlated gaussian processes as considered by
Bacry-Muzy \cite{MRW} on the interval, Fyodorov and Bouchaud \cite{FyoBou} and Astala \emph{et. al.} \cite{Jones} and on the circle, and Robert and Vargas \cite{RV} on $\mathbb{R}^d$ provide the main example of such singular gaussian processes that have a well-developed theory of exponentiation
based on regularization. Hence, for practical purposes, we limit ourselves to GMC measures. We show that our theory of intermittency differentiation
and renormalization extends to general GMC measures. In particular, we prove that the intermittency expansion is an exactly renormalized expansion in the centered moments of the total mass and compute it explicitly in terms of derivatives of log-moments at zero intermittency thereby giving a formal solution to the problem of renormalization in the high temperature phase. 
As an application, we treat the case
of the Bacry-Muzy GMC on the circle and compute the Mellin transform of the total mass exactly, recovering the results of \cite{FyoBou} and \cite{Me16}
that were obtained heuristically. The second contribution is to give a comprehensive
exposition of our approach. Originally, the derivations of the intermittency differentiation rule, of intermittency expansions, and of the Mellin transform
were presented in different publications with varying degrees of generality. The goal of this paper is to collect them all in one place so as to
emphasize their generality and make our approach accessible to a wider audience. Finally, the third contribution is to compile a list of conjectures
that might lead to further advances in the future, the primary of which is that the intermittency expansion is convergent for a class of smooth test
functions and therefore captures the distribution of the total mass of the GMC measure uniquely. 

Our paper is limited to the problem of the distribution of the total mass of GMC measures and to the derivation of the intermittency expansion of its Mellin transform. We will not attempt to review the theory of the Selberg and Morris integral probability distributions, which are conjectured to be the distributions 
of the total mass of the Bacry-Muzy GMC measure on the interval and circle, respectively, as it would lead us to the subject of Barnes beta probability 
distributions \cite{Me13}, \cite{Me14} that is outside the scope of this paper and that we recently reviewed in \cite{Me16}. We will also not review the theory of multiplicative chaos measures \emph{per se} and refer the interested reader to \cite{RV2} for a general review in the gaussian case, to \cite{MeIMRN} and \cite{Menon} for detailed reviews of the gaussian case on the interval and to \cite{Me5} and \cite{MeLMP} for the infinitely divisible case on the interval. 

Our results and derivations are all exact in the sense of equality of formal power series but not mathematically rigorous with the exception
of results in Section 5. 

The plan of the paper is as follows. In Section 2 we briefly review the general GMC construction following the works of Bacry and Muzy 
and Robert and Vargas and state the formula for
the moments of the total mass. In Section 3 we state our main results: the intermittency differentiation rule, properties of the 
intermittency expansion, high temperature (low intermittency) expansion of the Mellin transform, and extensions of our approach to multiple subsets, \emph{i.e.} the dependence structure of the GMC measure.
In Section 4 we give formal derivations of the key results from the first principles. In Section 5
we calculate the distribution of the total mass of the Bacry-Muzy GMC measure on the circle and relate it to the Morris integral probability distribution. 
In Section 6 we list a number of key conjectures and open problems. Conclusions are given in Section 7. The two appendices present the general 1D GMC measure  on the interval as a deformation of the Bacry-Muzy construction and a second derivation of the differentiation rule, respectively.

\section{A Brief Review of Gaussian Multiplicative Chaos}
\noindent Consider a stationary  
gaussian process on $\mathbb{R}^d$ having the general logarithmic covariance of the form
\begin{align}
{\bf{Cov}}\left[\omega_{\mu,\varepsilon}(x), \,\omega_{\mu, \varepsilon}
(y)\right]  & = \mu \bigl(\theta_\varepsilon\star g\bigr)(x-y), \\
& = \mu \int_{\mathbb{R}^d} g(x-y-\varepsilon z)\,\theta(z)\,dz,
\end{align}
and the mean
\begin{equation}\label{mean}
{\bf E} \left[ \omega_{\mu,\varepsilon}(x) \right] = -\frac{1}{2} {\bf Var}  \left[ \omega_{\mu,\varepsilon}(x) \right],
\end{equation}
where $\theta(z)$ is a positive-definite bump function decaying sufficiently fast at infinity, $\theta_\varepsilon(z)=\theta(z/\varepsilon)/\varepsilon^d,$
\begin{equation}\label{bump}
\int_{\mathbb{R}^d}  \theta(z)\,dz =1,
\end{equation}
and $g(x)$ is a positive-definite function of the form
\begin{equation}\label{glog}
g(x) = \log^+ \frac{1}{|x|} + h(x).
\end{equation}
Here $h(x)$ is bounded and continuous and
\begin{equation}
\log^+ (z) \triangleq 
 \begin{cases}
 \log(z), \, z\geq 1, \\
 0, \, 0<z<1.
 \end{cases}
\end{equation}
The parameter $\mu>0$ is known as intermittency and is often written in the form $\mu=2\beta^2,$ in which case
$\beta$ is referred to as the inverse temperature. 
This construction is due to Robert and Vargas \cite{RV}. In the special case of $d=1,$ letting $g(x)=-\log(x),$ 
$h(x)=0,$ and $\theta(x)=1$ one recovers the 
Bacry-Muzy construction \cite{BM} on the unit interval. The primary example of the $g(x)$ function is afforded by a $d-$dimensional generalization of the
conical construction of Bacry-Muzy that is due to Chainais \cite{Chainais}. Let the $d-$dimensional cone be defined by
\begin{equation}
C(x) = \big\{(y,t)\in \mathbb{R}^d\times \mathbb{R}_+ \, \Big|\, |y-x|\leq \min(t, 1)/2\big\}.
\end{equation}
Then, the function $g(x)$, defined by
\begin{equation}
g(x)\triangleq \int_{C(x)\cap C(0)} \frac{dydt}{t^{d+1}},
\end{equation}
is positive-definite and satisfies Eq. \eqref{glog} for some bounded and continuous $h(x),$ cf. \cite{Chainais}.

Now, fix a ball $\mathcal{D}\subset\mathbb{R}^d$ and consider the associated random measure (also known as the partition function in the physics literature),
\begin{equation}\label{Meps}
M_{\mu, \varepsilon}[\varphi](\mathcal{D})\triangleq \int_\mathcal{D} \varphi(x)\,e^{ \omega_{\mu,\varepsilon}(x)} \, dx.
\end{equation}
We will assume for simplicity that $\varphi(x)>0.$ 
The measure is normalized so that
\begin{equation}
{\bf E}\bigl[M_{\mu, \varepsilon}[\varphi](\mathcal{D})\bigr] = \int_\mathcal{D} \varphi(x)\,dx,
\end{equation}
due to
\begin{equation}
{\bf E}\Bigl[e^{ \omega_{\mu,\varepsilon}(x)}\Bigr] = 1,
\end{equation}
which follows from the condition in Eq. \eqref{mean}.

The theories of Kahane \cite{K2} and Robert-Vargas \cite{RV} imply that the limit is a universal (independent of the choice of $\theta(z)$) non-trivial random measure $M_\mu(dx)$ for a range of $\mu\in[0, \mu_c)$ so that
\begin{equation}
\lim\limits_{\varepsilon\rightarrow 0} M_{\mu, \varepsilon}[\varphi](\mathcal{D})=\int_\mathcal{D} \varphi(x)\,M_\mu(dx).
\end{equation}
The positive integer moments of the total mass\footnote{By a slight abuse of terminology, we refer to any integral of the form $\int_\mathcal{D} \varphi(x)\,M_\mu(dx)$ as the total mass.}  can be calculated up to some critical value. Denote
\begin{equation}
S_n[\varphi](\mu) = {\bf E}\Bigl[\Bigl(\int_\mathcal{D} \varphi(x)\,M_\mu(dx)\Bigr)^n\Bigr].
\end{equation}
Then, the standard gaussian calculation shows that the $n$th moment is
\begin{equation}\label{momentsint}
S_n[\varphi](\mu) =  \int\limits_{\mathcal{D}^n} \prod_{i=1}^n \varphi(x_i)\,
\exp\Bigl(\mu \sum_{i<j}^n g(x_i-x_j)\Bigr) dx_1\cdots dx_n, \; n<2d/\mu,
\end{equation}
and is infinite otherwise. In fact, it is easy to see that the contribution of the region
where the integrand is large, \emph{i.e.} where the points are within
$\varepsilon$ apart, is of the order
\begin{equation}\label{regioncontrib}
O\bigl(\varepsilon^{d(n-1)} \varepsilon^{ - \mu n(n-1)/2}\bigr).
\end{equation}
The condition for the existence of the integral is then 
\begin{equation}\label{nc}
n< 2d/\mu,
\end{equation}
so that the exponent in Eq. \eqref{regioncontrib} is positive.
We also note that the moments scale quadratically. Let $t$ denote the radius of $\mathcal{D}(t).$ Then, 
\begin{equation}
{\bf E}\Bigl[\Bigl(\int_{\mathcal{D}(t)} M_\mu(dx)\Bigr)^n\Bigr] \thicksim const\, t^{nd-\mu n(n-1)/2}, \; t\rightarrow 0.
\end{equation}
This means that the multifractal spectrum of the measure is
\begin{equation}\label{mspec}
\zeta(q)=qd-\frac{1}{2}\mu q (q-1).
\end{equation}
Relying on the theory of Bacry and Muzy \cite{BM} and Robert and Vargas \cite{RV}, the measure is non-degenerate provided
\begin{equation}
\zeta'(1)>0,
\end{equation}
and the positive moments for $q>1$ are finite if
\begin{equation}
\zeta(q)>d.
\end{equation}
The first condition gives us 
\begin{equation}\label{bc}
\mu_c = 2d,
\end{equation}
and the second recovers Eq. \eqref{nc}.
These conditions are well-known in the case of the Bacry-Muzy GMC measure on the interval and circle.
Throughout this paper it is tacitly assumed that $\mu<2d,$ \emph{i.e.} we are in the sub-critical regime.

In the special case of the GMC measure in dimension $d=1$ we will consider a slightly different construction that is appropriate
for defining the GMC measure on the circle. 
Let 
\begin{align}\label{covr}
{\bf{Cov}}\left[\omega_{\mu,\varepsilon}(s), \,\omega_{\mu, \varepsilon}
(t)\right] &  =
\begin{cases}
- \, \mu\log r(s-t), \, \varepsilon\leq|s-t|< 1, \\
\mu\left(-\log r(\varepsilon) + \bigl(1-\frac{|s-t|}{\varepsilon}\bigr) \varepsilon\frac{d}{d\varepsilon} \log r(\varepsilon)\right),\, |s-t|\leq \varepsilon.    
\end{cases}
\end{align}
The process $\omega_{\mu, \varepsilon}(t)$ is defined on $t\in (0,1).$ Its mean is defined as in Eq. \eqref{mean}. 
The function $r(t)$ is assumed to
have the following properties.\footnote{Positive definiteness in the interval case follows from the other conditions provided
$d^2/ dt^2 \log r(t)<0$ and $d /dt|_{t=1} \log r(t)>0,$ 
cf. Appendix A. Also, in the circular case, the condition $\varepsilon\leq|s-t| < 1$ is replaced with
$\varepsilon\leq|s-t|\leq 1-\varepsilon.$ }
\begin{gather}
r(t) \; \text{is smooth and even} \; \text{on} \;(-1,\,0) \cup (0,\,1),  \\
\lim\limits_{t\rightarrow 0}\;t\frac{d}{dt} \log r(t)=1, \label{derivbound}\\
(s, t)\rightarrow -\log r(s-t) \; \text{is positive definite}.
\end{gather}
We will also impose one of the two boundary conditions,
\begin{subequations}
\begin{align}
r(1)&=1, \;\text{or} \label{boundline}\\
r(t)&=r(1-t). \label{boundcircle}
\end{align}
\end{subequations}
corresponding to the process being defined on the interval or circle, respectively. 
The two main examples are 
\begin{align}
r(t) = & |t|, \label{int}\\
r(t) = &  |1-e^{2\pi it}|.\label{cir}
\end{align}
The first is the Bacry-Muzy process \cite{BM} and the second is its circular version first considered heuristically in \cite{FyoBou} and rigorously in \cite{Jones}. 
It is shown in Appendix A that the gaussian process defined in Eq. \eqref{covr} can be constructed by properly generalizing the construction of Bacry and Muzy. Finally, one defines the limit measure to be the exponential functional of $\omega_{\mu,\varepsilon}(s)$ as above, with
$g(t)=-\log r(t).$ The formula for the
moments then takes on the form
\begin{equation}\label{momentsint1d}
S_n[\varphi](\mu) =  \int\limits_{[0,\,1]^n} \prod_{i=1}^n \varphi(s_i)\,
\prod\limits_{i<j}^n r(s_i-s_j)^{-\mu} ds_1\cdots ds_n, \; n<2/\mu.
\end{equation}

\section{Results}
\noindent
In this section we will present a formal, \emph{i.e.} exact at the level of formal power series, theory of intermittency differentiation and renormalization for a general class of random measures that
are defined to be (the $\varepsilon\rightarrow 0$ limit of) the exponential functional of a (regularized) gaussian process. The derivations of the main results in this section will be given in Section 4.

Consider a centered gaussian process $X(y)$ on $\mathbb{R}^d$ with covariance $g(x,\,y),$ 
possibly depending on $\varepsilon,$ which we drop from the list of arguments for brevity.
Define 
\begin{equation}\label{omegaX}
\omega_\mu(y)\triangleq \sqrt{\mu} X(y) - \frac{\mu}{2} {\bf Var} X(y).
\end{equation}
The corresponding random measure is defined by (the $\varepsilon\rightarrow 0$ limit of)
\begin{equation}
M_\mu(dy)= e^{\omega_\mu(y)} \, dy.
\end{equation}
The moments of the total mass of some compact region $\mathcal{D}$ are then given formally by the multiple integrals
\begin{equation}\label{momentsintgen}
S_n[\varphi](\mu) =  \int\limits_{\mathcal{D}^n} \prod_{i=1}^n \varphi(x_i)\,
\exp\Bigl(\mu \sum_{i<j}^n g(x_i, \,x_j)\Bigr) dx_1\cdots dx_n, 
\end{equation}
cf. Eq. \eqref{momentsint} above, up to some critical moment, beyond which they become infinite.
For example, the GMC construction of the previous section corresponds to
\begin{equation}\label{gtheta}
g(x, \,y) =  \bigl(\theta_\varepsilon\star g\bigr)(x-y),
\end{equation}
where $g(x)$ is defined in Eq. \eqref{glog}.  The main problem that we wish to tackle is how to characterize the mass of the limit random measure
from its divergent moments, \emph{i.e.} that of renormalization. 

Consider the general functional of the total mass of the form
\begin{equation}\label{thefunctional}
v(\mu,\,f,\,F) \triangleq {\bf E}\Bigl[F\Bigl(\int_\mathcal{D} e^{\mu
f(x)}\varphi(x) \,M_\mu(dx)\Bigr)\Bigr],
\end{equation}
where $f(x)$ are $F(x)$ are sufficiently smooth but otherwise arbitrary.\footnote{$\varphi(s)$ is fixed and dropped from the list of arguments for brevity.}
It is understood that the integration with
respect to $M_\mu(dx)$ is in the sense of
$\varepsilon\rightarrow 0$ limit. We will be primarily interested in the special case of
\begin{equation}
f_n(x) = \sum_{j=1}^n g(x, x_j)
\end{equation}
given some distinct $x_j,$ $j=1\cdots n.$ Define the corresponding functional
\begin{equation}\label{generfunctional}
v(\mu, F, x_1\cdots x_n)\triangleq  {\bf E}\Bigl[F\Bigl(\int_\mathcal{D} \varphi(x) \, M_{\mu}(dx)\Bigr)  
e^{ \omega_{\mu}(x_1)+\cdots+\omega_{\mu}(x_n)}\Bigr].
\end{equation}
Due to the Girsanov theorem identity, cf. Lemma \ref{girsan} below, the two functionals are related by
\begin{align}
v(\mu, F, x_1\cdots x_n) = \exp\Bigl(\mu\sum\limits_{i<j}^n 
g(x_i,\,x_j)\Bigr) \, v(\mu,\,f_n,\,F). \label{GirsanovGaussianLim}
\end{align}
Our first result is the rule of intermittency differentiation extending the corresponding result for the Bacry-Muzy GMC measure on the interval, cf. \cite{Me2}--\cite{MeIMRN},
in the form of a functional Feynman-Kac equation, in which the intermittency plays the role of time.
\begin{theorem}[Rule of Intermittency Differentiation]\label{theoremdiff}
The expectation $v(\mu,\, f,\,F)$ is invariant under
intermittency differentiation and satisfies
\begin{align} \frac{\partial}{\partial \mu} v(\mu,\,
f,\,F) & = \int\limits_{\mathcal{D}} v\bigl(\mu,\,f+g(\cdot,
x),\,F^{(1)}\bigr) e^{\mu f(x)}f(x)\varphi(x)\,dx+ \nonumber \\
& +\frac{1}{2}\int\limits_{\mathcal{D}^2} v\bigl(\mu,
f+g(\cdot,x_1)+g(\cdot,x_2),F^{(2)}\bigr)
e^{\mu\bigl(f(x_1)+f(x_2)+g(x_1,x_2)\bigr)} \times \nonumber \\ & \times g(x_1,
x_2)\varphi(x_1)\varphi(x_2)\,dx_1\,dx_2.\label{therule}
\end{align}
The expectation $v(\mu, F, x_1\cdots x_n)$ is also invariant under
intermittency differentiation and satisfies
\begin{align}
\frac{\partial}{\partial \mu} v(\mu,\,F, \,x_1\cdots x_n)  & =  v(\mu,\,F, \,x_1\cdots x_n) \, \sum\limits_{i<j}^n g(x_i,\ x_j)  + \nonumber \\ & + 
\int\limits_{\mathcal{D}} v\bigl(\mu,\,F^{(1)}, \,x_1\cdots x_{n+1}\bigr) \sum_{j=1}^n g(x_j, x_{n+1})\varphi(x_{n+1})\,dx_{n+1}+ \nonumber \\
 & + 
\frac{1}{2}\int\limits_{\mathcal{D}^2}  v\bigl(\mu,\,F^{(2)}, \,x_1\cdots x_{n+2}\bigr) \,g(x_{n+1}, x_{n+2})\times \nonumber \\ & \times \varphi(x_{n+1}) \varphi(x_{n+2})\, dx_{n+1}\,dx_{n+2}. \label{therulen}
\end{align}
\end{theorem}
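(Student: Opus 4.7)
The plan is to exploit the linearity of the Gaussian mean and covariance in the intermittency parameter and reduce the $\mu$-derivative to a first-order expansion in an independent Gaussian increment. Since both $-\frac{\mu}{2}{\bf Var}\,X(y)$ and the covariance $\mu\,g(y,z)$ of $\omega_\mu$ are linear in $\mu$, the field $\omega_{\mu+\delta}$ has the same distribution as $\omega_\mu+\omega'_\delta$, where $\omega'_\delta$ is an independent copy of the Gaussian field at intermittency $\delta$. Substituting this decomposition into $v(\mu+\delta,f,F)$ gives
\begin{equation*}
v(\mu+\delta,f,F)={\bf E}\Big[F\Big(I(0)+\int_\mathcal{D} e^{\mu f(x)}\varphi(x)\,e^{\omega_\mu(x)}(e^{\delta f(x)+\omega'_\delta(x)}-1)\,dx\Big)\Big],
\end{equation*}
with $I(0)=\int_\mathcal{D} e^{\mu f(x)}\varphi(x)\,e^{\omega_\mu(x)}dx$. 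I then Taylor-expand $F$ around $I(0)$ in powers of the increment and integrate against the independent field $\omega'_\delta$ term-by-term.

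The heart of the computation is the Gaussian identity ${\bf E}_{\omega'}\big[\prod_{i=1}^{k}e^{\omega'_\delta(x_i)}\big]=\exp\big(\delta\sum_{i<j}^{k}g(x_i,x_j)\big)$ combined with $e^{\delta f(x_i)}=1+\delta f(x_i)+O(\delta^2)$. A short inclusion-exclusion, based on the vanishing of $\sum_{S\supseteq T,\,S\subseteq\{1,\ldots,k\}}(-1)^{k-|S|}$ whenever $|T|\leq k-1$, shows that at order $\delta$ only $k=1$ and $k=2$ survive, producing the factors $\delta f(x)$ and $\delta g(x_1,x_2)$ respectively; all contributions with $k\geq 3$ cancel. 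What remains inside the expectation over $\omega_\mu$ is the factor $e^{\omega_\mu(x)}$ (resp.\ $e^{\omega_\mu(x_1)+\omega_\mu(x_2)}$) multiplying $F^{(1)}(I(0))$ (resp.\ $F^{(2)}(I(0))$). Applying the Girsanov identity (Lemma \ref{girsan}) shifts $\omega_\mu(y)\mapsto\omega_\mu(y)+\mu\sum_{j}g(y,x_j)$ inside $F^{(k)}$ and generates a normalization equal to $1$ in the single case and $e^{\mu g(x_1,x_2)}$ in the double case; the shifted integrand is precisely the one defining $v\big(\mu,f+\sum_{j}g(\cdot,x_j),F^{(k)}\big)$, so that collecting the surviving prefactors yields \eqref{therule}.

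The second identity \eqref{therulen} follows from \eqref{therule} together with the Girsanov relation \eqref{GirsanovGaussianLim}. Writing $v(\mu,F,x_1\cdots x_n)=\exp\big(\mu\sum_{i<j}^{n}g(x_i,x_j)\big)\,v(\mu,f_n,F)$ with $f_n(x)=\sum_{j=1}^{n}g(x,x_j)$ and differentiating by the product rule, the derivative of the exponential prefactor supplies the first term of \eqref{therulen}; applying \eqref{therule} with $f=f_n$ to the remaining factor yields two integrals whose exponents, combined with the prefactor via the identity $f_n(x_{n+1})=\sum_{j=1}^{n}g(x_j,x_{n+1})$ (and, in the double case, the extra $e^{\mu g(x_{n+1},x_{n+2})}$ produced by \eqref{therule}), telescope to the full exponents $\mu\sum_{i<j}^{n+1}g(x_i,x_j)$ and $\mu\sum_{i<j}^{n+2}g(x_i,x_j)$, reassembling $v(\mu,F^{(1)},x_1\cdots x_{n+1})$ and $v(\mu,F^{(2)},x_1\cdots x_{n+2})$. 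The principal obstacle is conceptual rather than computational: the moment integrals \eqref{momentsintgen} diverge beyond the critical moment, so every step above must be read as an identity of formal power series in $\mu$, with the $\varepsilon\to 0$ limit handled term-by-term in the expansion.
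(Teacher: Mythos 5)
Your argument is correct, but it follows a genuinely different route from the paper's main-text proof of \eqref{therule}. The paper differentiates $v(\mu,f,F)$ directly in $\mu$, which produces an insertion of $\tfrac{1}{2\sqrt{\mu}}X(y)-\tfrac12\mathbf{Var}\,X(y)+f(y)$ under the expectation; the $X(y)$ insertion is then converted into a second functional derivative by the Gaussian integration-by-parts identity \eqref{gir2}, and the two $\mathbf{Var}\,X(y)$ contributions cancel before \eqref{gir1} is applied with $n=1$ and $n=2$. You instead exploit the linearity of the mean and covariance in $\mu$ to write $\omega_{\mu+\delta}\overset{d}{=}\omega_\mu+\omega'_\delta$ with $\omega'_\delta$ independent, Taylor-expand $F$ in the increment, and use the inclusion--exclusion cancellation to show that only the $k=1$ and $k=2$ terms survive at order $\delta$, finishing with Lemma \ref{girsan}. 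This is essentially the strategy of the paper's Appendix B (intermittency invariance, Lemma \ref{keyinvlem}, plus the combinatorial Lemma \ref{lemmaexpec}), though in a cleaner form: you dispense with the auxiliary parameter $L$ and the compensating Brownian motion $B(\delta)$, which the paper carries along only because that version of the argument generalizes to the infinitely divisible case, and unlike Appendix B you do invoke Girsanov at the last step to reassemble $v(\mu,f+\sum_j g(\cdot,x_j),F^{(k)})$. Your derivation of \eqref{therulen} from \eqref{therule} via the product rule and \eqref{GirsanovGaussianLim} is exactly the reduction the paper indicates in the remark following the theorem, and the telescoping of the exponents checks out. The trade-off: the paper's main proof buys brevity at the cost of the slightly delicate $X(y)/\sqrt{\mu}$ manipulation, while your route makes the order-$\delta$ bookkeeping and the origin of the $\tfrac12 g(x_1,x_2)$ kernel completely transparent; both are, as you note, identities of formal power series only.
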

The mathematical content of Theorem \ref{theoremdiff} is that differentiation with
respect to the intermittency parameter $\mu$ is equivalent to a
combination of two functional shifts induced by the $g$ function. It
is clear that the terms on the right-hand side of both Eqs. \eqref{therule} and \eqref{therulen}
are of the same functional form
as the corresponding functional on the left-hand side so that Theorem \ref{theoremdiff} allows us
to compute derivatives of all orders. We note that Eq. \eqref{therulen} is a special case
of Eq. \eqref{therule}, cf. Eq. \eqref{GirsanovGaussianLim} above. 
\begin{remark}
The interest in Eq. \eqref{therulen}
is that the functional $v(\mu,\,F, \,x_1\cdots x_n)$ is the ``minimal'' functional which is invariant under differentiation
and that Eq. \eqref{therulen}, unlike Eq. \eqref{therule}, is local. We conjecture that a hierarchy $\{v(\mu,\,F, \,x_1\cdots x_n)\}_n$ 
of functions that satisfies Eq. \eqref{therulen} captures the distribution uniquely. We also note that the three-term recurrences in Eqs. \eqref{therulen} and
\eqref{three_term} point to a possible connection with continuous fraction theory, which plays an important role in the classical moment problem, suggesting
that the non-classical moment problem of GMC might be analyzed by means of these recurrences. 
\end{remark}
As an immediate corollary of 
Theorem \ref{theoremdiff} we obtain an explicit formula for the $n$th intermittency derivative by a simple induction argument.
\begin{corollary}
\begin{equation}
\frac{\partial^n}{\partial \mu^n} v(\mu,\,f,\,F) =  \sum_{k=1}^{2n}
\int\limits_{\mathcal{D}^k} v\bigl(\mu,\,f+\sum\limits_{i=1}^k
g(\cdot,x_i), \,F^{(k)}\bigr) e^{\mu\bigl(\sum\limits_{i<j}^k
g(x_i,x_j)+\sum\limits_{i=1}^k
f(x_i)\bigr)} h_{n,k}(x)\,dx 
\end{equation}
for some functions $h_{n,k}(x)\equiv h_{n,k}(x_1,\cdots,x_k),$
$k=1\cdots 2n,$ that
are computed iteratively {\it via} the following three-term
recurrence\footnote{Empty sums and $h_{n,k}(x)$ for the values
of $k$ outside of $k=1\cdots 2n$ are understood to mean zero.}
\begin{gather}
h_{n+1,k}(x_1\cdots x_k)=\frac{1}{2} h_{n,k-2}(x_1\cdots x_{k-2})g(x_{k-1},x_k)\varphi(x_{k-1})\varphi(x_{k})+\nonumber \\ +
h_{n,k-1}(x_1\cdots x_{k-1}) \Bigl(\sum_{i=1}^{k-1} g(x_i,x_k)+f(x_k)\Bigr) \varphi(x_{k})+\nonumber \\ +
h_{n,k}(x_1\cdots x_k) \Bigl(\sum_{i<j}^k
g(x_i,x_j)+\sum_{i=1}^k f(x_i)\Bigr) , \label{three_term}
\end{gather}
for $k=1\cdots 2n+2,$ starting with
\begin{equation}
h_{1,1}(x)=f(x)\varphi(x),\,\, h_{1,2}(x_1,x_2)=\frac{1}{2}g(x_1,x_2)\varphi(x_1)\varphi(x_2).
\end{equation}
If $f\equiv 0,$ then the range of $k$ is changed to $2\cdots 2n.$ In particular, 
\begin{equation}
\frac{\partial^n}{\partial \mu^n}\Big\vert_{\mu=0}  {\bf E}\Bigl[F\Bigl(\int_\mathcal{D} \varphi(x) \,M_\mu(dx)\Bigr)\Bigr] =
\sum_{k=2}^{2n} \,
\int\limits_{\mathcal{D}^k} F^{(k)}\Bigl(\int_\mathcal{D} \varphi(z) dz\Bigr) h_{n,k}(x)\,dx. 
\end{equation}
\end{corollary}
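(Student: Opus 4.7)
The plan is to prove the formula by induction on $n$, with Theorem \ref{theoremdiff} as the sole ingredient. For the base case $n=1$, I rewrite Eq.~\eqref{therule} by absorbing all explicit $\mu$-exponentials into the unified form $\exp\bigl(\mu(\sum_{i<j}^k g(x_i,x_j)+\sum_{i=1}^k f(x_i))\bigr)$: trivially for $k=1$ (the double sum is empty), and for $k=2$ by direct inspection, since $\mu(f(x_1)+f(x_2)+g(x_1,x_2))$ is already of the required shape. Reading off the remaining factors gives $h_{1,1}(x)=f(x)\varphi(x)$ and $h_{1,2}(x_1,x_2)=\tfrac12 g(x_1,x_2)\varphi(x_1)\varphi(x_2)$, matching the stated initial conditions.

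For the inductive step, I assume the formula at level $n$ and differentiate both sides in $\mu$, pushing $\partial_\mu$ inside each $k$-fold integral. Three mechanisms contribute. First, differentiating the explicit exponential $\exp\bigl(\mu(\sum_{i<j}^k g(x_i,x_j)+\sum_{i=1}^k f(x_i))\bigr)$ produces the multiplicative factor $\sum_{i<j}^k g(x_i,x_j)+\sum_{i=1}^k f(x_i)$ and leaves $k$ unchanged; this yields the third term of the recurrence for $h_{n+1,k}$. Second, applying Eq.~\eqref{therule} to $v\bigl(\mu,f+\sum_{i=1}^k g(\cdot,x_i),F^{(k)}\bigr)$ generates a new integration over $x_{k+1}$ with multiplier $\bigl(f(x_{k+1})+\sum_{i=1}^k g(x_i,x_{k+1})\bigr)\varphi(x_{k+1})$ and $F^{(k+1)}$; the exponentials recombine via $\sum_{i<j}^k g(x_i,x_j)+\sum_{i=1}^k g(x_i,x_{k+1})=\sum_{i<j}^{k+1} g(x_i,x_j)$, and after the relabeling $k+1\mapsto k$ in the overall sum this produces the middle term of the recurrence. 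Third, the double integral in Eq.~\eqref{therule} contributes two new integrations over $x_{k+1},x_{k+2}$ weighted by $\tfrac12 g(x_{k+1},x_{k+2})\varphi(x_{k+1})\varphi(x_{k+2})$ with $F^{(k+2)}$; after the analogous exponential repackaging and the relabeling $k+2\mapsto k$ this gives the first term of the recurrence. Collecting contributions of a fixed new $k$ yields the claimed three-term relation, and the range extends from $1\le k\le 2n$ to $1\le k\le 2(n+1)$.

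The special case $f\equiv 0$ follows at once: $h_{1,1}\equiv 0$, and the recurrence preserves the property that $h_{n,k}$ vanishes outside $2\le k\le 2n$, since at $k=1$ the only a priori nonvanishing piece $h_{n,1}\cdot\sum g$ is zero by induction while the $f(x_k)$-factors drop. The final identity is obtained by specializing to $f\equiv 0$ and setting $\mu=0$, noting that $\omega_0\equiv 0$ implies $M_0(dy)=dy$ and hence $v\bigl(0,\sum_{i=1}^k g(\cdot,x_i),F^{(k)}\bigr)=F^{(k)}\bigl(\int_\mathcal{D}\varphi(z)\,dz\bigr)$, a deterministic constant that factors out of the integral.

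The only step requiring care — and the nearest thing to an obstacle — is the interchange of $\partial_\mu$ with the implicit $\varepsilon\to 0$ limit in the definition of $v$; since the results of the paper are understood at the level of formal power series in $\mu$, this is automatic, and the induction reduces to careful bookkeeping of which of the three mechanisms above produces which piece of the three-term recurrence.
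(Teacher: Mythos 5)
Your proposal is correct and is precisely the "simple induction argument" the paper invokes: the base case reads off $h_{1,1}$ and $h_{1,2}$ from Eq.~\eqref{therule}, and the inductive step applies Eq.~\eqref{therule} to each $v\bigl(\mu,f+\sum_{i=1}^k g(\cdot,x_i),F^{(k)}\bigr)$ together with differentiation of the explicit exponential, the three resulting contributions matching the three terms of the recurrence after relabeling. The bookkeeping of the exponential recombination and the $f\equiv 0$ and $\mu=0$ specializations are all handled as the paper intends.
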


The structure of the intermittency differentiation rule implies that one can represent the solution in the form of an expansion in $\mu,$ at least
formally. As an immediate corollary of Theorem \ref{theoremdiff} we obtain the following expansion (cf. \cite{Me3}  for the special case of the Bacry-Muzy GMC measure on the interval). Let
\begin{align}
\bar{\varphi} & \triangleq \int\limits_\mathcal{D} \varphi(x) \,dx, \\
H_{n,k}(\varphi) & \triangleq \int_{\mathcal{D}^k} h_{n,k}(x)\,dx,
\end{align}
and recall the formula for positive integer moments in Eq. \eqref{momentsintgen}.
\begin{theorem}[Intermittency Expansion]\label{ExpansionTh}
The total mass  ${\bf E}\Bigl[F\bigl(\int_\mathcal{D}\varphi(x)
\,M_\mu(dx)\bigr)\Bigr]$ has the formal intermittency expansion
\begin{equation}\label{theexpansion}
{\bf E}\Bigl[F\Bigl(\int_\mathcal{D} \varphi(x)
\,M_\mu(dx)\Bigr)\Bigr]=F(\bar{\varphi})+ \sum\limits_{n=1}^\infty
\frac{\mu^n}{n!}\Bigl[ \sum_{k=2}^{2n} F^{(k)}(\bar{\varphi})
H_{n,k}(\varphi)\Bigr].
\end{equation}
The expansion coefficients $H_{n, k}(\varphi)$ 
are given by the binomial transform of the derivatives of the
positive integer moments at zero intermittency,
\begin{equation}\label{HnkS}
H_{n, k}(\varphi) = \frac{(-1)^k}{k!} \sum\limits_{l=2}^k (-1)^l
\binom{k}{l}\,\bar{\varphi}^{k-l}\,\frac{\partial^n S_l}{\partial
\mu^n}[\varphi]\Big\vert_{\mu=0},
\end{equation}
\end{theorem}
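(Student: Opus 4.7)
The plan is to recognize \eqref{theexpansion} as the formal Maclaurin series of $v(\mu,0,F) = {\bf E}\bigl[F\bigl(\int_{\mathcal{D}}\varphi(x)\,M_\mu(dx)\bigr)\bigr]$ in $\mu$, and to read off its coefficients directly from the Corollary following Theorem \ref{theoremdiff}. Concretely, I would first note that at $\mu=0$ the covariance $\mu g$ vanishes identically, the process $\omega_0$ degenerates to zero, $M_0(dx)=dx$ as a deterministic measure, and $\int_{\mathcal{D}}\varphi\,M_0=\bar{\varphi}$; this produces the constant term $F(\bar{\varphi})$. Applying the Corollary with $f\equiv 0$ and then evaluating at $\mu=0$ — at which point every exponential factor $e^{\mu(\cdots)}$ becomes $1$ and each integrand $v(0,\sum_i g(\cdot,x_i),F^{(k)})$ collapses to $F^{(k)}(\bar{\varphi})$ — gives
\begin{equation*}
\left.\frac{\partial^n}{\partial\mu^n}\right|_{\mu=0}{\bf E}\Bigl[F\Bigl(\int_{\mathcal{D}}\varphi(x)\,M_\mu(dx)\Bigr)\Bigr]=\sum_{k=2}^{2n}F^{(k)}(\bar{\varphi})\,H_{n,k}(\varphi),
\end{equation*}
from which \eqref{theexpansion} follows by dividing by $n!$ and summing. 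At this stage the work is pure bookkeeping, since the hard content is already packaged into Theorem \ref{theoremdiff}.

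To obtain \eqref{HnkS}, I would then specialize the expansion just established to the monomial $F(x)=x^l$ for positive integer $l$, for which by definition ${\bf E}[F(\int\varphi\,M_\mu)]=S_l[\varphi](\mu)$. Since $F^{(k)}(\bar{\varphi})=\binom{l}{k}\,k!\,\bar{\varphi}^{l-k}$ for $k\le l$ and vanishes otherwise, extracting the coefficient of $\mu^n/n!$ yields the triangular system
\begin{equation*}
\left.\frac{\partial^n S_l}{\partial\mu^n}\right|_{\mu=0}=\sum_{k=2}^{\min(l,2n)}\binom{l}{k}\,k!\,\bar{\varphi}^{l-k}\,H_{n,k}(\varphi),\qquad l=2,3,\ldots,2n,
\end{equation*}
in the unknowns $\{H_{n,k}(\varphi)\}_{k=2}^{2n}$. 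The matrix $\bigl(\binom{l}{k}\,k!\,\bar{\varphi}^{l-k}\bigr)_{l,k}$ is upper-triangular with nonzero diagonal entries $k!$, hence invertible. Multiplying the $l$th equation by $(-1)^l\binom{k}{l}\bar{\varphi}^{-l}$, summing over $l$ from $2$ to $k$, and interchanging the order of summation reduces the inversion to the combinatorial identity
\begin{equation*}
\sum_{l=m}^{k}(-1)^l\binom{k}{l}\binom{l}{m}=(-1)^k\,\delta_{k,m},
\end{equation*}
which follows in one line from $\binom{k}{l}\binom{l}{m}=\binom{k}{m}\binom{k-m}{l-m}$ together with $\sum_{j=0}^{k-m}(-1)^j\binom{k-m}{j}=\delta_{k,m}$. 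The double sum collapses to the single term $m=k$, which, after collecting the factor $(-1)^k/k!$, reproduces \eqref{HnkS} exactly.

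The one conceptual subtlety worth flagging, rather than a real obstacle, is that $S_l[\varphi](\mu)$ is genuinely divergent as soon as $l>2d/\mu$, so the substitution $F(x)=x^l$ into an expansion derived for smooth $F$ is to be interpreted as an equality of formal power series in $\mu$ — consistent with the paper's declared level of rigor. If one wanted to avoid even this formal sleight of hand, an alternative route would be to differentiate the moment integral \eqref{momentsintgen} under the integral sign $n$ times at $\mu=0$ and match the resulting polynomial expressions in $g(x_i,x_j)$ against the iterates of $h_{n,k}$ produced by the recurrence \eqref{three_term}; but the monomial specialization is far shorter and makes the binomial-transform structure appear naturally from the Leibniz rule, so I would present that as the proof and relegate the alternative to a remark.
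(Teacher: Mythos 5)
Your proposal is correct and follows essentially the same route the paper intends: Theorem \ref{ExpansionTh} is presented there as an immediate corollary of Theorem \ref{theoremdiff} (Taylor expansion in $\mu$ with coefficients read off from the Corollary at $\mu=0$), and the paper's stated justification of Eq. \eqref{HnkS} is precisely your step of specializing to $F(x)=x^l$ and applying binomial inversion, which you have merely written out in full with the orthogonality identity $\sum_{l=m}^{k}(-1)^l\binom{k}{l}\binom{l}{m}=(-1)^k\delta_{k,m}$. Your remark about interpreting the substitution $F(x)=x^l$ as an equality of formal power series is consistent with the paper's declared level of rigor, so no gap remains.
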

It is clear that the expansion coefficients $H_{n, k}(\varphi)$ are determined uniquely by the moments. The reason for this is that 
they are independent of $F(x),$ so one can in particular take $F(x)=x^n$ and then use binomial inversion to compute them.
\begin{remark}
The intermittency expansion is the high temperature expansion of the distribution of the total mass
and is naturally interpreted as 
the asymptotic expansion in intermittency in the limit $\mu\rightarrow 0.$ Its structure can be elucidates further
by means of the following remarkable property that follows from the structure of the multiple integral representation of
the moments in Eq. \eqref{momentsintgen}. 
\end{remark}
\begin{theorem}[Intermittency Renormalization]\label{IntRenormal}
The expansion coefficients $H_{n, k}(\varphi)$ as defined by Eq. \eqref{HnkS} satisfy the fundamental renormalizability identity 
\begin{equation}\label{Hkeyid}
H_{n, k}(\varphi) = 0\,\,\,\forall k>2n.
\end{equation}
\end{theorem}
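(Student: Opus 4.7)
The plan is to substitute the explicit multiple-integral formula for the moments into Eq. \eqref{HnkS} and exhibit the cancellation directly. Differentiating Eq. \eqref{momentsintgen} $n$ times at $\mu=0$ gives
\begin{equation*}
\frac{\partial^n S_l}{\partial \mu^n}\Big\vert_{\mu=0} = \int_{\mathcal{D}^l} \prod_{i=1}^l \varphi(x_i)\,\Bigl(\sum_{1\le i<j\le l} g(x_i,x_j)\Bigr)^n dx_1\cdots dx_l.
\end{equation*}
The key structural observation is that the polynomial $G_l(x)^n := \bigl(\sum_{i<j} g(x_i,x_j)\bigr)^n$ expands into monomials of the form $g(x_{a_1},x_{b_1})\cdots g(x_{a_n},x_{b_n})$, and each such monomial therefore involves \emph{at most} $2n$ distinct indices drawn from $\{1,\ldots,l\}$. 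This hard cap of $2n$ is the only input beyond elementary combinatorics that the proof needs.

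My first step is to organize the expansion of $G_l^n$ by the size of the index support. For each $m$ with $2 \le m \le \min(l,2n)$, let $\psi_m(y_1,\ldots,y_m)$ denote the sum of those monomials in $(\sum_{1\le i<j\le m} g(y_i,y_j))^n$ whose support is all of $\{1,\ldots,m\}$. By symmetry of $G_l^n$ under index permutations,
\begin{equation*}
G_l^n = \sum_{m=2}^{\min(l,2n)} \sum_{\substack{J\subseteq\{1,\ldots,l\}\\ |J|=m}} \psi_m(x_J).
\end{equation*}
Integrating against $\prod_i\varphi(x_i)$ and collapsing the $l-m$ free variables produces a factor $\bar\varphi^{l-m}$, yielding
\begin{equation*}
\frac{\partial^n S_l}{\partial\mu^n}\Big\vert_{\mu=0} = \sum_{m=2}^{\min(l,2n)} \binom{l}{m}\bar{\varphi}^{l-m}\Psi_m,\qquad \Psi_m := \int_{\mathcal{D}^m} \prod_{i=1}^m \varphi(y_i)\,\psi_m(y)\,dy.
\end{equation*}

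Next I would substitute this into Eq. \eqref{HnkS}, interchange the sums over $l$ and $m$, and use the absorption identity $\binom{k}{l}\binom{l}{m}=\binom{k}{m}\binom{k-m}{l-m}$ to obtain
\begin{equation*}
H_{n,k}(\varphi) = \frac{(-1)^k}{k!}\sum_{m=2}^{\min(k,2n)}\binom{k}{m}\bar{\varphi}^{k-m}\Psi_m\sum_{j=0}^{k-m}(-1)^{j+m}\binom{k-m}{j}.
\end{equation*}
The inner sum equals $(-1)^m(1-1)^{k-m}$, which vanishes unless $m=k$. Thus only the $m=k$ term of the outer sum can contribute; but since the outer sum is capped at $m\le 2n$, no such term exists when $k>2n$, giving $H_{n,k}(\varphi)=0$.

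I do not anticipate a real obstacle: the argument is essentially combinatorial bookkeeping driven by the fact that a degree-$n$ polynomial in the $g(x_i,x_j)$'s touches at most $2n$ coordinates. The only point requiring a little care is the unambiguous definition of $\psi_m$ in the support-size decomposition and the treatment of the boundary case $l=m$. As a byproduct, the same computation produces the companion formula $H_{n,k}(\varphi) = \Psi_k/k!$ for $2\le k\le 2n$, which matches the interpretation of the intermittency expansion in Theorem \ref{ExpansionTh} as a renormalized expansion in centered moments.
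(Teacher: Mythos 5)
Your proof is correct, and it rests on exactly the same structural fact as the paper's: every monomial in the expansion of $\bigl(\sum_{i<j}g(x_i,x_j)\bigr)^n$ is a product of $n$ pair factors and hence has index support of size at most $2n$. The difference is purely in how the inclusion--exclusion is executed. The paper pulls all the $l$-fold integrals up to a common $k$-fold integral by symmetrizing the integrand over subsets $\sigma\in\mathcal{S}^k_l$, and then invokes a lemma (illustrated by example and cited from an earlier paper) stating that the alternating sum over subsets leaves precisely the full-support terms of the multinomial expansion. You instead integrate first, organize the moment derivatives by support size as $\partial^n S_l/\partial\mu^n\vert_{\mu=0}=\sum_m\binom{l}{m}\bar\varphi^{l-m}\Psi_m$, and collapse the double sum with the orthogonality identity $\sum_{l=m}^k(-1)^l\binom{k}{l}\binom{l}{m}=(-1)^m\,\delta_{mk}$. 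These are dual presentations of the same cancellation; your version has the merit of actually proving the combinatorial step that the paper delegates to a lemma, and it delivers the companion identity $H_{n,k}(\varphi)=\Psi_k/k!$ for $2\le k\le 2n$ (the paper's Lemma, i.e.\ that $h_{n,k}$ is the full-support part of the multinomial expansion up to the prefactor $\prod_i\varphi(x_i)/k!$) as an immediate byproduct rather than a separate assertion. The only point worth stating explicitly in a final write-up is why the coefficient of a fixed monomial with support $J$ in $G_l^n$ equals its coefficient in $G_{|J|}^n$ (the multinomial coefficient depends only on the exponents of the individual $g_{ij}$, not on the ambient sum), which is what makes $\psi_m$ well defined.
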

\begin{corollary}
The intermittency expansion in Eq. \eqref{theexpansion}
is an exactly renormalized expansion in the centered moments of
$\int_\mathcal{D} \varphi(x) \,M_\mu(dx).$ 
\begin{equation}
{\bf E}\Bigl[F\bigl(\int_\mathcal{D} \varphi(x)
\,M_\mu(dx)\bigl)\Bigr] =  F(\bar{\varphi})+ \sum\limits_{n=1}^\infty
\frac{\mu^n}{n!}  \Bigl[ \sum_{k=0}^{\infty} \frac{F^{(k)}(\bar{\varphi})}{k!} 
\frac{\partial^n}{\partial \mu^n}\Big\vert_{\mu=0} {\bf
E}\bigl[(\int_\mathcal{D} \varphi(x) \,M_\mu(dx)-\bar{\varphi})^k\bigr] \Bigr]. \label{renormexpans}
\end{equation}
\end{corollary}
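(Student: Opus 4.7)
The plan is to identify $H_{n,k}(\varphi)$ as (up to the factor $1/k!$) the $n$th intermittency derivative at $\mu=0$ of the $k$th centered moment of the total mass, and then to invoke Theorem \ref{IntRenormal} to see that the truncated sum $\sum_{k=2}^{2n}$ in Theorem \ref{ExpansionTh} is the same as the full sum $\sum_{k=0}^\infty$ appearing in Eq. \eqref{renormexpans}.

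First I would set $Y_\mu \triangleq \int_\mathcal{D} \varphi(x)\,M_\mu(dx)$ and note that, by the normalization of the measure, ${\bf E}[Y_\mu]=\bar{\varphi}$ independently of $\mu$; equivalently $S_0[\varphi](\mu)=1$ and $S_1[\varphi](\mu)=\bar{\varphi}$ are constants in $\mu$. Expanding the $k$th centered moment by the binomial theorem,
\begin{equation*}
C_k(\mu)\triangleq {\bf E}\bigl[(Y_\mu-\bar{\varphi})^k\bigr]=\sum_{l=0}^{k}\binom{k}{l}(-\bar{\varphi})^{k-l}S_l[\varphi](\mu),
\end{equation*}
and differentiating $n\ge 1$ times at $\mu=0$, the $l=0$ and $l=1$ contributions drop out because $S_0$ and $S_1$ are $\mu$-independent, so
\begin{equation*}
\frac{\partial^n C_k}{\partial \mu^n}\Big\vert_{\mu=0}=\sum_{l=2}^{k}\binom{k}{l}(-\bar{\varphi})^{k-l}\frac{\partial^n S_l}{\partial \mu^n}[\varphi]\Big\vert_{\mu=0}.
\end{equation*}

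Next I would compare this with the definition of $H_{n,k}(\varphi)$ in Eq. \eqref{HnkS}. Using $(-1)^k(-1)^l=(-1)^{k+l}$ and $(-1)^{k+l}\bar{\varphi}^{k-l}=(-\bar{\varphi})^{k-l}$, the prefactor in Eq. \eqref{HnkS} is exactly $\binom{k}{l}(-\bar{\varphi})^{k-l}/k!$, so the sign bookkeeping yields the clean identification
\begin{equation*}
H_{n,k}(\varphi)=\frac{1}{k!}\frac{\partial^n C_k}{\partial \mu^n}\Big\vert_{\mu=0},\qquad n\geq 1,\; k\geq 2.
\end{equation*}

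Finally I would substitute this into Theorem \ref{ExpansionTh} and extend the inner sum to run from $k=0$ to $\infty$. The $k=0$ term vanishes since $C_0(\mu)\equiv 1$; the $k=1$ term vanishes since $C_1(\mu)\equiv 0$; and, crucially, Theorem \ref{IntRenormal} gives $H_{n,k}(\varphi)=0$ for $k>2n$, which via the identification above means $\partial^n_\mu C_k|_{\mu=0}=0$ for all $k>2n$. Thus the range $k=2,\dots,2n$ can be replaced by $k=0,\dots,\infty$ without changing the value of the sum, producing exactly Eq. \eqref{renormexpans}.

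There is no serious obstacle here; the entire content of the corollary is Theorem \ref{IntRenormal} together with the elementary binomial inversion between raw and centered moments. The only delicate point is the sign accounting in showing $H_{n,k}(\varphi)=\partial^n_\mu C_k|_{\mu=0}/k!$, and the only conceptual point is the use of the vanishing $H_{n,k}=0$ for $k>2n$ to justify replacing the finite truncated sum by an infinite series, which is what makes the word ``renormalized'' appropriate: the formal expansion of ${\bf E}[F(Y_\mu)]$ in derivatives of centered moments closes automatically at order $2n$ in $\mu^n$.
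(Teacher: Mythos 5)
Your proof is correct and follows essentially the same route as the paper: the paper proves the corollary by stating the identity $\partial^n_\mu\big|_{\mu=0}\,{\bf E}\bigl[(\int_\mathcal{D}\varphi(x)M_\mu(dx)-\bar{\varphi})^k\bigr]=k!\,H_{n,k}(\varphi)$ as a ``simple observation'' and then invoking Eq. \eqref{Hkeyid} to note that the $k$-sum is finite, which is exactly your argument. Your write-up merely makes explicit the binomial expansion, the dropping of the $l=0,1$ terms, and the sign bookkeeping that the paper leaves to the reader.
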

This result follows from a simple observation that
\begin{equation}
\frac{\partial^n}{\partial \mu^n}\Big\vert_{\mu=0} \,{\bf
E}\Bigl[\Bigl(\int_\mathcal{D} \varphi(x) \,M_\mu(dx)-\bar{\varphi}\Bigl)^k\Bigr]=
k!\,H_{n,k}(\varphi)
\end{equation}
As the $k$ sum in Eq. \eqref{renormexpans} is finite by Eq. \eqref{Hkeyid}, we see that Eq. \eqref{renormexpans} is equivalent to Eq. \eqref{theexpansion}.
Of course, if the moments were all finite, then one could also write
\begin{equation}
{\bf E}\Bigl[F\bigl(\int_0^1 \varphi(s)
\,M_\mu(ds)\bigl)\Bigr] =  F(\bar{\varphi})+ \sum\limits_{n=1}^\infty
\frac{\mu^n}{n!} \frac{\partial^n}{\partial \mu^n}\Big\vert_{\mu=0}  \Bigl[ \sum_{k=0}^{\infty} \frac{F^{(k)}(\bar{\varphi})}{k!}
{\bf
E}\bigl[(\int_\mathcal{D} \varphi(x) \,M_\mu(dx)-\bar{\varphi})^k\bigr] \Bigr], \label{fullexpans}
\end{equation}
which is the naive expansion in the centered moments. This is not possible in our case. The formal equivalence of the expansion in
Eq. \eqref{renormexpans} to that in Eq. \eqref{fullexpans} shows that we have removed infinity from the moments and so found
an exactly renormalized solution. 
\begin{remark}
The fundamental open problem is to determine convergence properties of the intermittency expansion in Theorem \ref{ExpansionTh}.
We conjecture that it is convergent for smooth test functions of the GMC measure. This question is particularly
important in the context of identifying the law of GMC uniquely: does a probability distribution having the same moments
and coefficients $H_{n, k}(\varphi)$ (derivatives of the centered moments at zero intermittency) 
equal that of the total mass? 
\end{remark}
The full intermittency expansion can be calculated in closed form in terms of log-moments of the total mass. 
Let us assume that the expansion of the logarithm of the moments
in $\mu$ is known. In other words,
\begin{equation}\label{cpl}
\log \int\limits_{\mathcal{D}^l} \prod_{i=1}^l \varphi(x_i)\,
\exp\Bigl(\mu \sum_{i<j}^l g(x_i-x_j)\Bigr) dx_1\cdots dx_l = l\log \bar{\varphi}+ \sum\limits_{p=1}^\infty c_p(l) \mu^p,
\end{equation}
where it is understood that the coefficients $c_p(l)$ depend on $\varphi.$ We will first consider the special case of the Mellin transform. 
The intermittency expansion in this case takes on the form
\begin{gather}
{\bf E}\Bigl[\Bigl(\int_\mathcal{D} \varphi(x)
\,M_\mu(dx)\Bigr)^q\Bigr]=\bar{\varphi}^q+ \sum\limits_{n=1}^\infty
\frac{\mu^n}{n!}
f_n(q), \\
f_n(q) = \sum\limits_{k=2}^\infty (q)_k
\bar{\varphi}^{q-k}\,H_{n, k}(\varphi), \,\,n=1,2,3,\cdots,\label{fnq}
\end{gather}
where the sum has been extended to infinity by Eq. \eqref{Hkeyid},
$(q)_k \triangleq q(q-1)(q-2)\cdots (q-k+1),$ and it is understood that 
the coefficients $f_n(q)$ depend in $\varphi.$ 
The following result generalizes the calculation of the high temperature expansion of the Mellin transform in \cite{Me4}.
\begin{theorem}[Intermittency Expansion of the Mellin Transform]\label{mellin}
The expansion coefficients of the Mellin transform satisfy
the recurrence\footnote{In \cite{Me4} and \cite{MeIMRN} this recurrence is written in terms of $b_r(q)=(r+1)\,c_{r+1}(q).$
}
\begin{equation}\label{fnplusoneidentity}
f_{n+1}(q) = n!\sum\limits_{r=0}^n
\frac{f_{n-r}(q)}{(n-r)!} \,(r+1)\,c_{r+1}(q),\; f_0(q)=\bar{\varphi}^q, 
\end{equation}
and the intermittency expansion of the Mellin transform is
\begin{equation}\label{intermittencyMellin}
{\bf E}\Bigl[\Bigl(\int_\mathcal{D} \varphi(x)
\,M_\mu(dx)\Bigr)^q\Bigr]=\bar{\varphi}^q \exp\Bigl(\sum_{r=1}^\infty
\mu^{r} \,c_r(q)\Bigr).
\end{equation}
\end{theorem}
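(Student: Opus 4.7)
The plan is to derive the recurrence \eqref{fnplusoneidentity} first at positive integer values $q=l$, where it is an immediate consequence of differentiating the log-moment identity in $\mu$, and then to lift it to arbitrary $q$ by a polynomiality argument. The closed form \eqref{intermittencyMellin} is then obtained by recognizing the recurrence as the coefficient identity of a first-order ODE in $\mu$, which integrates trivially.

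I would begin by specializing Theorem~\ref{ExpansionTh} to $F(x)=x^q$, so that $F^{(k)}(\bar{\varphi})=(q)_k\bar{\varphi}^{q-k}$ produces the expansion \eqref{fnq} for $f_n(q)$. By the renormalization identity \eqref{Hkeyid} the sum truncates at $k=2n$, so each $f_n(q)$ is a polynomial in $q$ of degree at most $2n$. Next, fix $l\in\mathbb{Z}_{>0}$ small enough that the $l$th positive integer moment is finite, so that the formal Mellin expansion agrees term by term with the Taylor series in $\mu$ of $S_l[\varphi](\mu)$. The hypothesis \eqref{cpl} rewrites as $S_l(\mu)=\bar{\varphi}^l\exp\bigl(\sum_{p\ge 1}c_p(l)\mu^p\bigr)$; differentiating in $\mu$ gives
\begin{equation*}
S_l'(\mu)=S_l(\mu)\sum_{r\ge 0}(r+1)c_{r+1}(l)\,\mu^r,
\end{equation*}
and extracting the coefficient of $\mu^n/n!$ on both sides yields \eqref{fnplusoneidentity} at $q=l$.

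The main obstacle is to promote the recurrence from integers $l$ to arbitrary complex $q$. For this I would argue that the log-moment coefficients $c_p(l)$, defined a priori only at positive integers through \eqref{cpl}, extend to polynomials in $l$ of degree at most $p+1$: this follows from the standard cumulant/connected-graph expansion of $\log S_l$ generated by the multiple integral \eqref{momentsintgen}, in which the $\mu^p$ contribution is a sum over connected pair diagrams on the $l$ particles whose combinatorial weight is polynomial in $l$. Granting this, both sides of \eqref{fnplusoneidentity} are polynomials in $q$ that coincide at every sufficiently large positive integer $l$ and are therefore equal as polynomials in $q$.

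Finally, with the recurrence established for arbitrary $q$, set $G(\mu,q)=\bar{\varphi}^q+\sum_{n\ge 1}\mu^n f_n(q)/n!$. Then \eqref{fnplusoneidentity} is precisely the Cauchy-product identity equivalent to the first-order ODE
\begin{equation*}
\partial_\mu G(\mu,q)=G(\mu,q)\,\partial_\mu\!\Bigl(\sum_{p\ge 1}c_p(q)\mu^p\Bigr),\qquad G(0,q)=\bar{\varphi}^q,
\end{equation*}
whose unique formal solution is $G(\mu,q)=\bar{\varphi}^q\exp\!\bigl(\sum_{p\ge 1}c_p(q)\mu^p\bigr)$, establishing \eqref{intermittencyMellin}. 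At the formal-power-series level adopted throughout the paper, one may alternatively bypass the polynomial-interpolation step by declaring \eqref{intermittencyMellin} to be the definition of $c_p(q)$ for non-integer $q$, in which case the recurrence reduces to the manipulation of the preceding display and the only remaining content is the consistency of the two definitions at positive integers, which is precisely what the integer-$l$ calculation above verifies.
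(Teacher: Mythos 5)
Your proposal is correct, but it reaches the recurrence \eqref{fnplusoneidentity} by a genuinely different route than the paper. The paper never passes through integer $q$ and interpolates; instead it first derives a recurrence for the coefficients $H_{n+1,k}(\varphi)$ themselves (Lemma \ref{reclemma}, obtained from the Bell-polynomial recurrence applied to $\partial^n_\mu S_l\vert_0=\bar{\varphi}^l Y_n(1!c_1(l),\dots,n!c_n(l))$), with coefficients $A_{n,k}$ and $B_{r,t,k}$ that are finite binomial transforms of $c_{n+1}(l)$ and $c_{r+1}(l)$ over integers $l\leq k$. It then writes $c_p(l)=R_p(d/dz)\vert_{z=0}e^{zl}$ using Lemma \ref{polyndepend} and evaluates the resulting sums $\sum_k(q)_kA_{n,k}$ and $\sum_k(q)_kB_{r,t,k}$ in closed form via the binomial series identities $\sum_{k\geq 2}\frac{a^k}{k!}(q)_k=(1+a)^q-qa-1$ and $\sum_{k\geq t}\frac{a^{k-t}}{(k-t)!}(q)_k=(q)_t(1+a)^{q-t}$, so the continuation from $l$ to $q$ is performed by explicit resummation rather than by polynomial interpolation. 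Your route --- verify \eqref{fnplusoneidentity} at integer $l$ by differentiating $S_l=\bar{\varphi}^l\exp(\sum_p c_p(l)\mu^p)$, then interpolate using that $f_n(q)$ has degree $\leq 2n$ (a consequence of Eq. \eqref{Hkeyid}) and that $c_p(q)$ is polynomial --- is more elementary and bypasses Lemma \ref{reclemma} and the binomial identities entirely; what it requires in exchange is exactly the same key input, the polynomiality of $c_p(l)$ in $l$, which you supply by a linked-cluster/connected-diagram argument where the paper uses Hermite generating functions and the potential partition polynomials of \cite{Charal}. Both arguments for polynomiality are standard and essentially equivalent in content. One small wrinkle to fix: you first restrict to $l$ small enough that the $l$th moment is finite, but later invoke agreement ``at every sufficiently large positive integer,'' which is incompatible as written. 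The repair is that at the formal-power-series level adopted throughout the paper the Taylor coefficients $\partial^n_\mu S_l\vert_{\mu=0}=\int_{\mathcal{D}^l}\prod_i\varphi(x_i)\bigl[\sum_{i<j}g(x_i,x_j)\bigr]^n dx$ are finite multiple integrals for \emph{every} $l$ and $n$, and $f_n(l)=\partial^n_\mu S_l\vert_{\mu=0}$ is just the binomial inversion of Eq. \eqref{HnkS}; so the coefficient identity holds at all positive integers and the interpolation goes through. The final step, recognizing the recurrence as the Cauchy-product form of $\partial_\mu G=G\,\partial_\mu(\sum_p c_p(q)\mu^p)$, is the same mechanism the paper invokes via the Bell-polynomial recurrence \eqref{Bellrecurrence}.
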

Thus, the expansion for the complex moments is obtained by replacing $l$ with $q$ in the expansion of the positive integer moments.
\begin{remark}
It is a major open question to quantify the extent to which the intermittency expansion of the Mellin transform
determines the probability distribution. The expansion of the Mellin transform in $\mu$ is not expected to be convergent in general, except for finite ranges of integer moments, cf. Propositions 4.1 and 4.2 in \cite{Me4} for the Bacry-Muzy measure on the interval, and should be interpreted as the asymptotic expansion. We conjecture the following uniqueness result: a probability distribution having the same moments
and asymptotic expansion of the Mellin transform in intermittency as the moments of the mass of the GMC measure and its intermittency
expansion, respectively, coincides with the mass distribution. Such probability distributions are known for the Bacry-Muzy measures on the
interval \cite{Me4} and circle, cf. 
Section 5 below. What is lacking is an appropriate uniqueness result.
\end{remark}
The significance of Theorem \ref{mellin} is that it allows one to reconstruct the high temperature expansion of the Mellin transform 
from the dependence of the moments on $\mu.$
Moreover, one can compute the expansion of 
a general transform of the total mass, extending the corresponding result
for the Bacry-Muzy GMC measure on the interval, cf. \cite{Me5}.
\begin{corollary}[Intermittency Expansion of the General Transform]
Consider the normalized random variable
\begin{equation}
\widetilde{M}\triangleq
\frac{1}{\bar{\varphi}}
\int_\mathcal{D} \varphi(x) \,M_\mu(dx).
\end{equation}
Given constants $a$ and $s$ and a smooth function
$F(s),$ the intermittency expansion of the general transform of
$\log\widetilde{M}$
\begin{equation}
{\bf
E}\Bigl[F\bigl(s+a\log\widetilde{M}\bigr)\Bigr]
= \sum\limits_{n=0}^\infty
F_n\bigl(a,\,s\bigr) \frac{\mu^n}{n!}
\end{equation}
is determined by
$F_0\bigl(a,\,s)=F(s),$ and
\begin{equation}
F_{n+1}\bigl(a,\,s\bigr) =
\sum\limits_{r=0}^n \frac{n!}{(n-r)!}  \, (r+1) c_{r+1}\Bigl(a\frac{d}{ds}\Bigr)
F_{n-r}\bigl(a,\,s\bigr).
\end{equation}
\end{corollary}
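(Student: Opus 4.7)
The plan is to reduce the statement to Theorem \ref{mellin} by representing $F(s+a\log\widetilde{M})$ as a power (in $\widetilde{M}$) of the operator $a\,d/ds$ acting on $F$, and then to extract the recurrence from a Leibniz expansion in $\mu$.

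First, I would invoke the Taylor shift operator identity $F(s+\alpha)=e^{\alpha\,d/ds}F(s)$, valid at the level of formal power series in $\alpha$ for smooth $F$. Setting $\alpha=a\log\widetilde{M}$ gives
\begin{equation}
F\bigl(s+a\log\widetilde{M}\bigr)=\exp\Bigl(a\log\widetilde{M}\cdot\tfrac{d}{ds}\Bigr)F(s)=\widetilde{M}^{a\,d/ds}F(s),
\end{equation}
where the right-hand side is defined by formally expanding the exponential and treating $d/ds$ as a parameter that commutes with $\log\widetilde{M}$. Taking expectations and interchanging with the operator (justified formally term-by-term), I get
\begin{equation}
{\bf E}\bigl[F(s+a\log\widetilde{M})\bigr]={\bf E}\bigl[\widetilde{M}^{a\,d/ds}\bigr]F(s).
\end{equation}

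Next, I would apply Theorem \ref{mellin} with $q$ replaced by the operator $a\,d/ds$. Since $\widetilde{M}=\bar{\varphi}^{-1}\int_\mathcal{D}\varphi\,M_\mu$, Eq.\ \eqref{intermittencyMellin} yields ${\bf E}[\widetilde{M}^q]=\exp\bigl(\sum_{r=1}^\infty \mu^r c_r(q)\bigr)$, and substituting the operator for $q$ gives
\begin{equation}
G(\mu)\triangleq{\bf E}\bigl[F(s+a\log\widetilde{M})\bigr]=\exp\Bigl(\sum_{r=1}^\infty\mu^r c_r\bigl(a\tfrac{d}{ds}\bigr)\Bigr)F(s).
\end{equation}
Clearly $G(0)=F(s)=F_0(a,s)$, so identifying $F_n(a,s)=G^{(n)}(0)$ gives the claimed expansion.

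To extract the recurrence, I would differentiate the ODE satisfied by $G$. Writing $H(\mu)\triangleq\sum_{r=0}^\infty(r+1)\mu^r c_{r+1}\bigl(a\,d/ds\bigr)$, the exponential representation gives $G'(\mu)=H(\mu)G(\mu)$. Applying the Leibniz rule to $G^{(n+1)}(0)$ and using $H^{(r)}(0)=r!\,(r+1)\,c_{r+1}\bigl(a\,d/ds\bigr)$ produces
\begin{equation}
F_{n+1}(a,s)=G^{(n+1)}(0)=\sum_{r=0}^n\binom{n}{r}H^{(r)}(0)\,G^{(n-r)}(0)=\sum_{r=0}^n\frac{n!}{(n-r)!}(r+1)\,c_{r+1}\Bigl(a\tfrac{d}{ds}\Bigr)F_{n-r}(a,s),
\end{equation}
which is the stated recurrence. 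The only subtle point—and the main thing to watch—is that all manipulations are done in the ring of formal power series in $\mu$ with coefficients in a suitable space of differential operators in $s$, so the interchange of ${\bf E}$ with the operator $\widetilde{M}^{a\,d/ds}$ and the substitution of the operator for $q$ in the Mellin expansion are legitimate at the formal level that underlies the entire Section 3.
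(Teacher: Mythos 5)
Your proof is correct and follows exactly the route the paper intends: the paper offers no explicit argument beyond the remark that the corollary is Theorem \ref{mellin} with $q$ replaced by the operator $a\,d/ds$, and your shift-operator identity $F(s+a\log\widetilde{M})=\widetilde{M}^{a\,d/ds}F(s)$ together with the Leibniz expansion of $G'(\mu)=H(\mu)G(\mu)$ is the natural formalization of that substitution (legitimate here because the $c_r(q)$ are polynomials in $q$ by Lemma \ref{polyndepend}, so all the operators commute). The recurrence you obtain is precisely Eq. \eqref{fnplusoneidentity} with $q\rightarrow a\,d/ds$, as required.
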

This result shows that the solution for the general transform is obtained by replacing $q$ with $a\,d/ds$ in the solution
for the Mellin transform in Theorem \ref{mellin}.
\begin{remark}
The only cases, in which the coefficients $c_p(l)$ are known analytically, are those of the Bacry-Muzy GMC measure 
on the interval and circle and $\varphi(s)$ such that the moments are given by the Selberg or Morris
integral, respectively. In both cases $c_p(l)$ are expressed as functions of $l$ in terms of Bernoulli polynomials, cf. \cite{Me4} and Section 5 below.
\end{remark}

We end this section with an extension of the intermittency differentiation rule and theory of intermittency renormalization to
the joint distribution of the measure, which we first considered in the special case of the Bacry-Muzy GMC on the interval  in \cite{Me6}. 
From now on, we let $\varphi(s)=1$ for simplicity. 
Let $\mathcal{D}_j,$ $j=1\cdots N,$ denote $N$ non-overlapping compact subsets
of $\mathcal{D}$ and $F_j(x),$ $j=1 \cdots N,$ denote $N$ smooth functions.
Consider the functional 
\begin{equation}
v\bigl(\mu,\,\vec{f},\,\vec{F},\,\vec{\mathcal{D}}\bigr) \triangleq {\bf
E}\Bigl[\prod\limits_{j=1}^N F_j\Bigl(\int_{\mathcal{D}_j} e^{\mu f_j(x)}
M_\mu(dx)\Bigr)\Bigr].
\end{equation}
We will use $|\mathcal{D}|$ to denote the Lebesgue measure of $\mathcal{D},$ 
and write
$\vec{f}+\vec{g}(\cdot, s)$ to denote the vector function with
components $f_j(u)+g(u, s),$ $j=1\cdots N.$ 
Denote the joint positive integer moments of 
$\int_{\mathcal{D}_1} M_\mu(dx),$ $\cdots,$ $\int_{\mathcal{D}_N} M_\mu(dx)$
by
\begin{equation}
S_{q_1\cdots q_N}(\mu, \vec{I}) \triangleq {\bf
E}\Bigl[\prod\limits_{j=1}^N \Bigl(\int_{\mathcal{D}_j}
M_\mu(dx)\Bigr)^{q_j}\Bigr].
\end{equation}

\begin{theorem}[Intermittency Renormalization for Multiple Subsets]
The rule of intermittency differentiation for multiple subsets is
\begin{gather}
\frac{\partial}{\partial \mu}
v\bigl(\mu,\,\vec{f},\,\vec{F},\,\vec{\mathcal{D}}\bigr) = \sum\limits_{j=1}^N
\int\limits_{\mathcal{D}_j} f_j(x) e^{\mu f_j(x)}
v\Bigl(\mu,\vec{f}+\vec{g}(\cdot,
x),F_1\cdots F^{(1)}_j\cdots F_N, \vec{\mathcal{D}}\Bigr) dx+ \nonumber \\
+\frac{1}{2} \sum\limits_{j=1}^N \iint\limits_{\mathcal{D}^2_j}
\exp\Bigl(\mu\bigl(f_j(x_1)+f_j(x_2)+g(x_1,x_2)\bigr)\Bigr)\,
g(x_1,\,x_2) \star \nonumber \\
\star v\Bigl(\mu, \vec{f}+\vec{g}(\cdot,x_1)+\vec{g}(\cdot,x_2),
F_1\cdots F^{(2)}_j \cdots F_N,\vec{\mathcal{D}}\Bigr) \,dx_1dx_2+ \nonumber \\
 + \sum\limits_{j<k}^N \;\;\iint\limits_{\mathcal{D}_j\times \mathcal{D}_k}
\exp\Bigr(\mu\bigl(f_j(x_1)+f_k(x_2)+g(x_1,x_2)\Bigr) \,g(x_1,\,x_2)
\star \nonumber\\
\star v\Bigl(\mu, \vec{f}+\vec{g}(\cdot,x_1)+\vec{g}(\cdot,x_2),
F_1\cdots F^{(1)}_j \cdots F^{(1)}_k \cdots F_N,\vec{\mathcal{D}}\Bigr) \,dx_1
dx_2.
\end{gather}
The intermittency expansion of the vector $\int_{\mathcal{D}_1} M_\mu(dx),\cdots ,\int_{\mathcal{D}_N} M_\mu(dx)$
is
\begin{equation}
{\bf E}\Bigl[\prod\limits_{j=1}^N F_j\Bigl(\int\limits_{\mathcal{D}_j}
M_\mu(dx)\Bigr)\Bigr] = \prod\limits_{j=1}^N
F_j(|\mathcal{D}_j|)+\sum\limits_{n=1}^\infty \frac{\mu^n}{n!}\!\!\!\!
\sum\limits_{2\leq \sum\limits_{j=1}^N k_j \leq 2n} H_{n; k_1\cdots
k_N} \prod\limits_{j=1}^N F_j^{(k_j)}(|\mathcal{D}_j|).
\end{equation}
Given $2\leq k_1+\cdots+k_N\leq 2n,$ the expansion coefficients $H_{n; k_1\cdots k_N}$
satisfy 
\begin{align}
H_{n; k_1\cdots k_N} = & \frac{(-1)^{k_1+\cdots+k_N}}{k_1!\cdots k_N!}
\sum\limits_{\substack{q_j\leq k_j \\j=1\cdots N}}
(-1)^{q_1+\cdots+q_N} \prod\limits_{j=1}^N |\mathcal{D}_j|^{k_j-q_j}
\binom{k_j}{q_j} 
\frac{\partial^n}{\partial\mu^n}\Big\vert_{\mu=0} \, S_{q_1\cdots
q_N}(\mu, \vec{\mathcal{D}}), \nonumber \\
= & \frac{1}{ k_1!\cdots k_N!}\frac{\partial^n}{\partial\mu^n}\Big\vert_{\mu=0}\;{\bf
E}\Bigl[\prod\limits_{j=1}^N \Bigl(\int_{\mathcal{D}_j}
M_\mu(ds)-|\mathcal{D}_j|\Bigr)^{k_j}\Bigr]. \label{Hcoeffmultiple}
\end{align}
The expansion coefficients as defined in Eq. \eqref{Hcoeffmultiple} for all indices
$k_1,\cdots, k_N$ satisfy 
\begin{equation}
H_{n; k_1\cdots k_N} = 0 \,\,\text{if} \,\, \sum\limits_{j=1}^N
k_j>2n.
\end{equation}
\end{theorem}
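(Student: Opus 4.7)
The plan is to extend the arguments behind Theorems~\ref{theoremdiff} and \ref{IntRenormal} to the multi-subset setting, and then handle the vanishing identity via a clean inclusion-exclusion applied to the centered-moment representation.

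For the differentiation rule, I would mirror the derivation of Eq.~\eqref{therule}, working at the $\varepsilon$-regularized level and passing to the $\varepsilon\to 0$ limit at the end. Differentiating $v(\mu,\vec{f},\vec{F},\vec{\mathcal{D}})$ in $\mu$ brings down, inside the expectation, factors from the explicit $\sqrt{\mu}$ in $\omega_\mu = \sqrt{\mu}\,X - \mu\,{\bf Var}(X)/2$ contained in each $M_\mu(dx)$, together with the explicit $\mu f_j(x)$ piece in the exponents and the compensator from $\partial_\mu {\bf Var}/2$. Applying Gaussian integration by parts (equivalently, the Girsanov shift of Eq.~\eqref{GirsanovGaussianLim}) to each resulting $X$ factor produces a covariance $g(\cdot,\cdot)$ and shifts the remaining $\omega_\mu$ fields by a $\mu g(\cdot,\text{pivot})$ drift. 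Depending on whether the two endpoints of the contraction fall into a single $\mathcal{D}_j$ or into two different $\mathcal{D}_j,\mathcal{D}_k$, one generates either an $F_j^{(2)}$ term or a cross $F_j^{(1)}F_k^{(1)}$ term, and combining these with the linear $f_j$ piece reproduces the three displayed lines of the rule.

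For the intermittency expansion and Eq.~\eqref{Hcoeffmultiple}, iterating the rule shows that $v$ is formally smooth in $\mu$ at $\mu=0$; since $\omega_0\equiv 0$ we have $M_0(dx)=dx$ and $\int_{\mathcal{D}_j} M_0(dx)=|\mathcal{D}_j|$, so Taylor expansion delivers the series with unknown coefficients $H_{n;k_1\cdots k_N}$. Specializing $F_j(x)=x^{q_j}$ converts the left-hand side to the joint moment $S_{q_1\cdots q_N}(\mu,\vec{\mathcal{D}})$ and the right-hand side to an $N$-fold sum in the falling factorials $(q_j)_{k_j}$; coordinatewise binomial inversion (exactly as in the derivation of Eq.~\eqref{HnkS}) inverts this relation and yields the first equality of Eq.~\eqref{Hcoeffmultiple}. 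The second equality then follows by expanding $\prod_j(\int_{\mathcal{D}_j} M_\mu\,dx - |\mathcal{D}_j|)^{k_j}$ with the binomial theorem and matching term by term.

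For the renormalization identity, the key step is the rewrite
\begin{equation*}
\int_{\mathcal{D}_j} M_\mu(dx) - |\mathcal{D}_j| = \int_{\mathcal{D}_j}\bigl(e^{\omega_\mu(x)}-1\bigr)\,dx,
\end{equation*}
which turns the centered moment in Eq.~\eqref{Hcoeffmultiple} into a $K$-fold integral ($K=\sum_j k_j$) of ${\bf E}\bigl[\prod_{a=1}^{K}(e^{\omega_\mu(y_a)}-1)\bigr]$. Expanding the product by inclusion-exclusion and using the Gaussian identity ${\bf E}\bigl[\prod_{a\in T}e^{\omega_\mu(y_a)}\bigr]=\exp(\mu P_T)$ with $P_T=\sum_{i<j,\,i,j\in T}g(y_i,y_j)$ reduces the integrand to $\sum_{T\subset[K]}(-1)^{K-|T|}\exp(\mu P_T)$. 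Differentiating $n$ times at $\mu=0$ gives $\sum_T(-1)^{K-|T|}P_T^{\,n}$; expanding $P_T^{\,n}$ as a sum over length-$n$ edge sequences $E$ with vertices in $T$ and swapping sums leaves an inner factor $\sum_{T'\subset[K]\setminus V(E)}(-1)^{K-|V(E)|-|T'|}$, which vanishes unless $V(E)=[K]$. Since any length-$n$ edge sequence satisfies $|V(E)|\le 2n$, no admissible $E$ exists once $K>2n$, forcing $H_{n;k_1\cdots k_N}=0$ in that range. The main obstacle I anticipate is the bookkeeping in the differentiation rule, keeping the same-subset and cross-subset cases straight with the correct combinatorial weights; the expansion is then a routine Taylor and binomial-inversion calculation, and the vanishing identity is the short inclusion-exclusion sketched above.
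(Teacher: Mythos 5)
Your proposal is correct and follows essentially the same route as the paper, which itself only states this theorem as the multi-subset extension of Theorems \ref{theoremdiff}--\ref{IntRenormal}: Girsanov/integration-by-parts for the differentiation rule with the same-subset versus cross-subset split of the double contraction, Taylor expansion plus coordinatewise binomial inversion for Eq. \eqref{Hcoeffmultiple}, and a combinatorial vanishing argument for $\sum_j k_j>2n$. Your inclusion--exclusion over subsets $T$ containing the vertex set of a length-$n$ edge sequence is just a repackaging of the paper's symmetrization lemma (the terms of the multinomial expansion of $\bigl(\sum_{i<j}g(x_i,x_j)\bigr)^n$ involving all $K$ indices), so no genuinely new idea is introduced or missing.
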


Hence, as before, the intermittency expansion is an exactly renormalized expansion in the joint centered
moments. The joint moments can be represented in the form of multiple integrals similar to Eq. \eqref{momentsint}.
It is an open question how to compute these integrals, even for $g(x, y)=-\log|x-y|$ and $N=2,$ \emph{i.e.} the joint distribution
the Bacry-Muzy measure of two subinterval of the unit interval, 
cf. \cite{Me6} and \cite{MeLMP} for detailed discussions and partial analytical results.

\section{Derivations}
\noindent In this section we will give derivations of our main results. Most of the arguments are exact at the level of formal power series
but not mathematically rigorous as we generally shun questions of convergence. We begin with a proof of the differentiation rule in the form
of Eq. \eqref{therule}. A direct proof\footnote{
The interest in the direct proof of Eq. \eqref{therulen} is that it naturally generalizes to 
the infinitely divisible case, cf. \cite{Me17}, whereas the derivation of Eq. \eqref{therule} given in this section appears to be specific to the gaussian case.
} of Eq. \eqref{therulen} that does not involve the Girsanov theorem or Eq. \eqref{GirsanovGaussianLim} 
is given in Appendix B. The argument for Eq. \eqref{therule} 
is based on repeated applications of the Girsanov theorem in the following form. 
\begin{lemma}[Girsanov]\label{girsan}
Let $X$ be a centered gaussian process as in Section 3 and $g(x,\,y)$  denote its covariance. Given a general functional $G$ of $X,$ $\alpha>0,$  
and distinct $y_i,$ $i=1\cdots n,$ there holds the identities
\begin{align}
{\bf E}\Bigl[ G(X)\, \prod\limits_{i=1}^n e^{\alpha\,X(y_i)-\frac{\alpha^2}{2} {\bf Var} X(y_i)}\Bigr] = & e^{\alpha^2
\sum\limits_{i<j}^n g(y_i,\,y_j)}\,
{\bf E}\Bigl[ G\Bigl(X+\alpha\sum\limits_{i=1}^n g(\cdot,\, y_i)\Bigr)\Bigr], \label{gir1} \\
{\bf E}\Bigl[ G(X)\, X(y)\Bigr] = & \frac{\partial}{\partial \alpha}\Big\vert_{\alpha=0} {\bf E}\Bigl[ G\Bigl(X+\alpha g(\cdot,\, y)\Bigr)\Bigr].\label{gir2}
\end{align}
\end{lemma}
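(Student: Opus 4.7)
The plan is to derive Eq. \eqref{gir1} from the Cameron--Martin shift formula for Gaussian measures and then obtain Eq. \eqref{gir2} as its infinitesimal form by specializing to $n=1$ and differentiating in $\alpha$ at $\alpha=0$. The main ingredient is the classical identity that, for any centered Gaussian process $X$ and any linear functional $L(X)$ with finite variance, the tilted expectation ${\bf E}[G(X)\exp(L(X)-\tfrac{1}{2}{\bf Var}\,L(X))]$ equals ${\bf E}[G(X+h)]$, where the deterministic shift $h$ is the covariance kernel paired with $L$, namely $h(z)={\bf Cov}(L(X),X(z))$.

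First I would introduce the linear functional $L(X)\triangleq\alpha\sum_{i=1}^n X(y_i)$ and expand
\begin{equation*}
{\bf Var}\,L(X)=\alpha^2\sum_{i=1}^n {\bf Var}\,X(y_i)+2\alpha^2\sum_{i<j}^n g(y_i,y_j).
\end{equation*}
This rewrites the product on the left-hand side of Eq. \eqref{gir1} as
\begin{equation*}
\prod_{i=1}^n e^{\alpha X(y_i)-\frac{\alpha^2}{2}{\bf Var}\,X(y_i)}=e^{\alpha^2\sum_{i<j}^n g(y_i,y_j)}\,\exp\Bigl(L(X)-\tfrac{1}{2}{\bf Var}\,L(X)\Bigr).
\end{equation*}
The scalar prefactor is exactly the one appearing on the right-hand side of Eq. \eqref{gir1}, so applying the Cameron--Martin identity to the remaining exponential produces the shift $h(z)={\bf Cov}(L(X),X(z))=\alpha\sum_{i=1}^n g(y_i,z)$, which matches the argument of $G$ on the right-hand side of Eq. \eqref{gir1}.

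Then for Eq. \eqref{gir2} I would specialize Eq. \eqref{gir1} to $n=1$ and differentiate both sides in $\alpha$ at $\alpha=0$. The left-hand side expands as ${\bf E}[G(X)(1+\alpha X(y)+O(\alpha^2))]$, whose first-order coefficient is ${\bf E}[G(X)X(y)]$; the empty-sum prefactor on the right-hand side is identically $1$, so its $\alpha$-derivative at $\alpha=0$ reduces to $\frac{\partial}{\partial\alpha}\big|_{\alpha=0}{\bf E}[G(X+\alpha g(\cdot,y))]$, which is exactly Eq. \eqref{gir2}.

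The only non-trivial step is the Cameron--Martin shift itself: for the singular log-correlated limit field, $g(\cdot,y)$ need not lie in a standard Cameron--Martin space, and $G$ is only described as a ``general functional.'' Because the paper explicitly operates at the level of formal power series in $\mu$ and invokes the lemma only through its $\varepsilon$-regularized incarnation, where both $X$ and the shift function live in a genuine reproducing-kernel Hilbert space, the identity is standard for polynomial $G$ and extends to the functionals of interest by linearity and density. The hardest task in a fully rigorous treatment would be to identify the correct class of admissible $G$ once the regularization is removed, but this is outside the formal scope declared in Section 3.
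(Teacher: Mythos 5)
Your proof is correct and follows essentially the same route as the paper: the Cameron--Martin shift you invoke is precisely the change of measure $d\mathcal{Q}$ that the paper constructs explicitly (with the cross-covariance prefactor $e^{\alpha^2\sum_{i<j}g(y_i,y_j)}$ arising from ${\bf Var}\,L(X)$ exactly as in your expansion). Your explicit derivation of Eq.~\eqref{gir2} by differentiating the $n=1$ case of Eq.~\eqref{gir1} at $\alpha=0$ is a small but welcome addition, since the paper's proof only establishes the measure change and leaves that step implicit.
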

\begin{proof}
We have by construction
\begin{equation}
{\bf E}\bigl[e^{\alpha\,X(y)-\frac{\alpha^2}{2} {\bf Var} X(y)}\bigr]=1
\end{equation}
for all $y.$ 
Introduce an equivalent probability measure
\begin{equation}
d\mathcal{Q}\triangleq  e^{-\alpha^2
\sum\limits_{i<j}^n g(y_i,\,y_j)}\prod\limits_{i=1}^n e^{\alpha X(y_i)-\frac{\alpha^2}{2} {\bf Var} X(y_i)}
\,d\mathcal{P}
\end{equation}
where $\mathcal{P}$ is the original probability measure
corresponding to ${\bf E}.$ Then, the law of the process
$y\rightarrow X(y)+\alpha\sum_{i=1}^n g(y,\,y_i)$ with respect to $\mathcal{P}$
equals the law of the original process $s\rightarrow X(y)$ with
respect to $\mathcal{Q}.$ Indeed, it is easy to show that the two
processes have the same finite-dimensional distributions by
computing their characteristic functions. The computation is straightforward. 
The continuity of sample paths can then be used
to conclude that the equality of all finite-dimensional
distributions implies the equality in law. \qed
\end{proof}
We now proceed to the derivation of Theorem \ref{theoremdiff} in the form of Eq. \eqref{therule}.
\begin{proof}
Recall the definition of $v(\mu,\,f,\,F)$ in Eq. \eqref{thefunctional}. We have
\begin{align}
\frac{\partial}{\partial \mu} v(\mu,\,f,\,F) = & {\bf E}\Bigl[F'\Bigl(\int_\mathcal{D} e^{\mu
f(y)} \varphi(y) \,M_\mu(dy)\Bigr) e^{\sqrt{\mu} X(y) - \frac{\mu}{2} {\bf Var} X(y)+\mu f(y)} \times \nonumber \\ & \times
\bigl[ \frac{1}{2\sqrt{\mu}}X(y) -\frac{1}{2}{\bf Var} X(y) + f(y) \bigr] \Bigr] \varphi(y) dy.\label{first}
\end{align}
To simplify notation, we will write $\cdot$ for the arguments of $F'$ and $F''$ below. 
By Eq. \eqref{gir1} with $\alpha=\sqrt{\mu}$ and $n=1$ we have the identity
\begin{gather}
{\bf E}\Bigl[F'(\cdot) \,e^{\sqrt{\mu} X(y) - \frac{\mu}{2} {\bf Var} X(y)+\mu f(y)} 
\bigl[ -\frac{1}{2}{\bf Var} X(y) + f(y) \bigr] \Bigr] \varphi(y) dy = \nonumber \\
\int_\mathcal{D}  v(\mu,\,f+g(\cdot, y),\,F') \, e^{\mu f(y)} \bigl[ -\frac{1}{2}{\bf Var} X(y) + f(y) \bigr]  \varphi(y) dy.\label{second}
\end{gather}
The remaining term can be reduced by means of Eq. \eqref{gir2}. We have the identity
\begin{align}
{\bf E}\Bigl[F'(\cdot) \,e^{\sqrt{\mu} X(y)} X(y) \Bigr] = & \sqrt{\mu}\, \int_\mathcal{D} {\bf E}\Bigl[F''(\cdot) \, e^{\sqrt{\mu} X(z) - \frac{\mu}{2} {\bf Var} X(z)+\mu f(z)} e^{\sqrt{\mu} X(y)}\Bigr]  \times \nonumber \\ & \times g(z,\,y) \varphi(z) dz  + \sqrt{\mu}\,
{\bf E}\Bigl[F'(\cdot) \,e^{\sqrt{\mu} X(y)}\Bigr]  {\bf Var} X(y).\label{third}
\end{align}
It follows 
\begin{gather}
\frac{1}{2\sqrt{\mu}} \int_\mathcal{D} {\bf E}\Bigl[F'(\cdot)\, e^{\sqrt{\mu} X(y) - \frac{\mu}{2} {\bf Var} X(y)+\mu f(y)}
X(y) \Bigr] \varphi(y) dy = \nonumber \\ \frac{1}{2} \int_{\mathcal{D}^2} {\bf E}\Bigl[F''(\cdot) \, e^{\sqrt{\mu} X(z) - \frac{\mu}{2} {\bf Var} X(z)}
e^{\sqrt{\mu} X(y) - \frac{\mu}{2} {\bf Var} X(y)}\Bigr] e^{\mu f(z) + \mu f(y)} g(z,\,y)  \varphi(z) \varphi(y) \times \nonumber \\ 
\times dzdy + \frac{1}{2} \int_\mathcal{D}  v(\mu,\,f+g(\cdot, y),\,F') \, e^{\mu f(y)} {\bf Var} X(y)  \varphi(y) dy.\label{fourth}
\end{gather}
It remains to substitute this expression and Eq. \eqref{second} 
into Eq. \eqref{first}, notice the cancellation, and apply Eq. \eqref{gir1} with $n=2$ to the double integral expression in Eq. \eqref{fourth}. \qed
\end{proof}

We now proceed to Theorem \ref{IntRenormal}. 
\begin{proof}
The proof of Eq. \eqref{Hkeyid} follows from the structure of the formula for the moments. Recalling
Eqs. \eqref{momentsintgen} and \eqref{HnkS}, we can write
\begin{equation}
H_{n, k}(\varphi) = \frac{(-1)^k}{k!} \sum\limits_{l=2}^k (-1)^l \binom{k}{l} \bar{\varphi}^{k-l}
\int\limits_{\mathcal{D}^l}  \prod_{i=1}^l \varphi(x_i) \Bigl[\sum\limits_{i<j}^l g(x_i,
x_j)\Bigr]^n\,dx^{(l)}.
\end{equation}
The claim is that Eq. \eqref{Hkeyid} follows from this equation and the fact that $g(x, y)$ is symmetric. 
Let $\mathcal{S}^k_l$ denote the set of all subsets of
$\{1\cdots k\}$ consisting of exactly $l$ elements. Let
\begin{equation}
h_{n, k} (x) \triangleq \frac{(-1)^k}{k!}\prod_{i=1}^k \varphi(x_i)\sum\limits_{l=2}^k
(-1)^l \sum\limits_{\sigma\in\mathcal{S}^k_l}
\Bigl[\sum\limits_{\substack{i<j\\i,j\in\sigma}} g(x_i,
x_j)\Bigr]^n.
\end{equation}
Clearly, $h_{n, k}(x)$ is symmetric in $x_1\ldots x_k.$ The size of $\mathcal{S}^k_l$ is $\binom{k}{l}$ so that we have the identity for any $l\leq k,$
\begin{equation}
\int\limits_{\mathcal{D}^k}  \prod_{i=1}^k \varphi(x_i)  \sum\limits_{\sigma\in\mathcal{S}^k_l}
\Bigl[\sum\limits_{\substack{i<j\\i,j\in\sigma}} g(x_i,
x_j)\Bigr]^n \,dx^{(k)}=\binom{k}{l} \bar{\varphi}^{k-l} \!\!
\int\limits_{\mathcal{D}^l}  \prod_{i=1}^l \varphi(x_i) \Bigl[\sum\limits_{i<j}^l g(x_i,
x_j)\Bigr]^n \!dx^{(l)},
\end{equation}
and, therefore,
$H_{n,k}(\varphi) = \int\limits_{\mathcal{D}^k}
h_{n,k}(x)\,dx^{(k)}.$
The interest in the symmetrized representation is explained in the following proposition, cf. \cite{Me3}.
\begin{lemma}
Up to the prefactor $\prod_{i=1}^k \varphi(x_i) /k!,$ the coefficients $h_{n,k}(x)$
are the same as those terms in the multinomial expansion of
\begin{equation}\label{multinom}
\Bigl(\sum\limits_{i<j}^k g(x_i, x_j)\Bigr)^n
\end{equation}
that involve all the indices $x_1\cdots x_k.$
\end{lemma}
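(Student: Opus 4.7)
The plan is to prove the lemma by a direct inclusion--exclusion argument on the Boolean lattice of subsets of $\{1,\ldots,k\}$. For each subset $T\subseteq\{1,\ldots,k\}$ with $|T|\geq 2$, let $P_T(x)$ denote the portion of the multinomial expansion of $\bigl(\sum_{i<j}^{k} g(x_i,x_j)\bigr)^n$ consisting of those monomials whose indices cover exactly $T$. The structural observation I would use is that for any $\sigma\subseteq\{1,\ldots,k\}$ with $|\sigma|\geq 2$,
\[
\Bigl(\sum_{\substack{i<j\\i,j\in\sigma}} g(x_i,x_j)\Bigr)^n \;=\; \sum_{\substack{T\subseteq\sigma\\|T|\geq 2}} P_T(x),
\]
because each monomial in the left-hand expansion is supported on a unique subset $T\subseteq\sigma$, and the collection of such monomials (with their multinomial coefficients) depends only on $T$, not on the ambient $\sigma$.

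Next I would insert this decomposition into the inner double sum in the definition of $h_{n,k}(x)$ and interchange the order of summation over $\sigma$ and $T$:
\[
\sum_{l=2}^{k}(-1)^l\!\!\sum_{\sigma\in\mathcal{S}^k_l}\Bigl(\sum_{\substack{i<j\\i,j\in\sigma}} g(x_i,x_j)\Bigr)^n \;=\; \sum_{\substack{T\subseteq\{1,\ldots,k\}\\|T|\geq 2}} P_T(x)\sum_{\sigma\supseteq T}(-1)^{|\sigma|}.
\]
The inner alternating sum evaluates to $(-1)^{|T|}\sum_{j=0}^{k-|T|}\binom{k-|T|}{j}(-1)^j$, which vanishes unless $|T|=k$, in which case it equals $(-1)^k$. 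Hence only $P_{\{1,\ldots,k\}}(x)$ survives, and after multiplying by the prefactor $(-1)^k\prod_{i=1}^{k}\varphi(x_i)/k!$ the sign cancels, leaving $h_{n,k}(x)=\tfrac{1}{k!}\prod_{i=1}^{k}\varphi(x_i)\,P_{\{1,\ldots,k\}}(x)$, which is precisely the claim.

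Since the argument reduces to the standard M\"obius identity on a Boolean lattice, I do not anticipate a real obstacle; the only care required is to verify that $P_T$ is intrinsically defined (independent of any $\sigma\supseteq T$), which is immediate because a monomial in the factors $g(x_i,x_j)$ uniquely determines the set of indices it uses. As an immediate by-product, this representation yields the renormalizability identity $H_{n,k}(\varphi)=0$ for $k>2n$ stated in Theorem~\ref{IntRenormal}: a product of $n$ factors $g(x_i,x_j)$ involves at most $2n$ distinct indices, so $P_{\{1,\ldots,k\}}\equiv 0$ whenever $k>2n$.
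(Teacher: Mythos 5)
Your proof is correct, and it rests on exactly the idea the paper intends: grouping the multinomial expansion by the exact index support $T$ and killing every $T\subsetneq\{1,\ldots,k\}$ with the alternating sum $\sum_{\sigma\supseteq T}(-1)^{|\sigma|}=(-1)^{|T|}(1-1)^{k-|T|}$. The only real difference is one of completeness: the paper states the lemma and then merely \emph{illustrates} it with the $n=3$, $k=4$ example before passing to the consequence $H_{n,k}=0$ for $k>2n$, whereas you supply the actual M\"obius-inversion argument (including the needed check that $P_T$ is intrinsic to $T$ and not to the ambient $\sigma$), and your closing observation that a product of $n$ factors $g(x_i,x_j)$ touches at most $2n$ indices is precisely how the paper then derives Theorem~\ref{IntRenormal}.
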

This is best illustrated by an example. Let $n=3$ and $k=4.$ Then,
abbreviating $g_{ij}\triangleq g(x_i,x_j),$ we have for $h_{3,4}$
\begin{gather}
3 g_{12}g_{34}^2+3g_{34}g_{12}^2+3 g_{13}g_{24}^2+3
g_{24}g_{13}^2+ 3g_{14}g_{23}^2+3g_{23}g_{14}^2 +6g_{12}g_{23}g_{34} + 6g_{12}g_{13}g_{14}\nonumber \\
+6g_{12}g_{23}g_{24}+6g_{13}g_{23}g_{34}+
6g_{14}g_{24}g_{34}+6g_{12}g_{13}g_{34}+6g_{12}g_{14}g_{23}+6g_{13}g_{14}g_{24}\nonumber\\
+6g_{12}g_{14}g_{34}+6g_{13}g_{14}g_{23}+6g_{12}g_{24}g_{34}+6g_{14}g_{23}g_{34}+6g_{14}g_{23}g_{24}\nonumber\\
+6g_{13}g_{24}g_{34} +6g_{13}g_{23}g_{24}+6g_{12}g_{13}g_{24}.
\end{gather}
It follows that the coefficients $H_{n,k}(\varphi)$ must satisfy Eq. \eqref{Hkeyid}
as each term in the multinomial expansion in Eq. \eqref{multinom} contains the
product of exactly $n$ factors, and each individual factor involves
two distinct indices. These $n$ factors together must involve all
the $k$ indices, which is only possible if $k\leq 2n.$ \qed

\end{proof}

We finally proceed to Theorem \ref{mellin}. 
The idea of the proof is to compute the expansion coefficients $H_{n,k}(\varphi)$ recursively and effectively sum the intermittency series in 
the special case of $F(x)=x^q,$ $q\in\mathbb{C}.$ We first need to establish the following property 
 of the coefficients $c_p(l)$ 
involved in the expansion of log-moments in intermittency, cf. Eq. \eqref{cpl}. 
\begin{lemma}\label{polyndepend}
The coefficients $c_p(l)$ are polynomials in the moment order $l.$ 
\end{lemma}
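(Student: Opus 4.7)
The plan is to reduce polynomiality of $c_p(l)$ to two facts: first, that the coefficients in the expansion of the moment integral itself are polynomials in $l$; second, that polynomiality is preserved under taking the formal logarithm.

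For the first step, I would expand the exponential as a power series in $\mu$ and expand $\bigl(\sum_{i<j}g(x_i,x_j)\bigr)^n$ multinomially. Each resulting monomial is a product of factors $g(x_i,x_j)^{m_{ij}}$ with $\sum m_{ij}=n$; the indices occurring with positive multiplicity form an ``active'' subset $V\subseteq\{1,\dots,l\}$ of some size $k$ with $2\le k\le 2n$. The integrals over the $l-k$ inactive indices each contribute a factor $\bar\varphi$, and by the permutation symmetry of $\prod_i\varphi(x_i)\prod g(x_i,x_j)^{m_{ij}}$ in the active labels the sum over all choices of $V$ of size $k$ yields a factor $\binom{l}{k}$ times an integral depending only on the multi-edge pattern, not on $l$. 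Collecting contributions by the number of edges $n$ and the active size $k$ gives
\[
Z_l(\mu)/\bar\varphi^l \;=\; 1 + \sum_{n=1}^\infty \frac{\mu^n}{n!}\,P_n(l), \qquad P_n(l) \;=\; \sum_{k=2}^{2n}\binom{l}{k}\,\bar\varphi^{-k}\,b_{n,k},
\]
with $b_{n,k}$ independent of $l$, so each $P_n(l)$ is a polynomial in $l$ of degree at most $2n$.

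For the second step, I would apply the formal-power-series identity $\log(1+y)=\sum_{m\ge 1}(-1)^{m-1}y^m/m$ with $y=\sum_{n\ge 1}\mu^n P_n(l)/n!$. Reading off the coefficient of $\mu^p$ gives
\[
c_p(l) \;=\; \sum_{m=1}^p\frac{(-1)^{m-1}}{m}\sum_{\substack{n_1,\dots,n_m\ge 1\\n_1+\cdots+n_m=p}}\prod_{i=1}^m\frac{P_{n_i}(l)}{n_i!},
\]
which is a finite sum of products of polynomials in $l$ and hence is itself a polynomial.

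The main obstacle is the symmetry argument in the first step: one must verify carefully that the integral attached to each multinomial monomial depends only on the multiplicity pattern of the edges (i.e.\ on the multigraph up to vertex relabeling), which is what extracts the $\binom{l}{k}$ factor from the sum over $V$. Once this combinatorial bookkeeping is in place the logarithm step is automatic. A more conceptual alternative would be to recognize $c_p(l)$ as $1/p!$ times the $p$-th cumulant of $\sum_{i<j}g(X_i,X_j)$ with $X_1,\dots,X_l$ i.i.d.\ of density $\varphi/\bar\varphi$, and to exploit the vanishing of joint cumulants across independent blocks to restrict the expansion to connected multigraphs; this refinement delivers the sharper degree bound $\deg_l c_p\le p+1$, but is not required for the polynomiality statement asserted by the lemma.
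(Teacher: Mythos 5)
Your proposal is correct, and it reaches the lemma by a genuinely different route than the paper. The paper works at the level of the random variable: it expands $e^{\sqrt{\mu}X(y)-\frac{\mu}{2}{\bf Var}X(y)}$ in powers of $\mu^{1/2}$ via the Hermite generating function, writes the regularized mass as $\bar{\varphi}\bigl(1+\sum_n \mu^{n/2}m_n(\varphi)/n!\bigr)$, expresses the $l$th power through the potential partition polynomials $C_{k,l}$, and quotes the standard fact that $C_{k,l}$ is polynomial in $l$, finishing with the logarithmic potential polynomials. You instead expand the explicit multiple-integral formula for $S_l[\varphi](\mu)$ in $\mu$, organize the multinomial expansion of $\bigl(\sum_{i<j}g(x_i,x_j)\bigr)^n$ by the multigraph of active indices, and extract the $\binom{l}{k}$ dependence from the permutation symmetry of the integrand --- which is exactly the symmetrization device the paper itself deploys in the proof of Theorem \ref{IntRenormal}, so the combinatorial step you flag as the main obstacle is sound and already implicit in the paper. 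What each buys: the paper's route is shorter once one accepts the cited properties of potential polynomials and avoids handling the integrals combinatorially; yours is more self-contained and elementary, makes the degree bound $\deg_l P_n\le 2n$ visible (a restatement of $H_{n,k}=0$ for $k>2n$), and your cumulant/connected-multigraph remark gives the sharper bound $\deg_l c_p\le p+1$, which matches the Bernoulli-polynomial formulas of Lemma \ref{logmomentscircle} where $c_p(l)$ involves $B_{p+1}(l)$. The logarithm step is handled identically in substance in both arguments, as a finite formal composition preserving polynomiality.
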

\begin{proof}
Recalling the generating function of Hermite polynomials, 
\begin{equation}
e^{xt-t^2/2} = \sum\limits_{n=0}^\infty H_n(x) \frac{t^n}{n!},
\end{equation}
we can formally write with $t=\sqrt{\mu\,{\bf Var} X(y)}$ and $x=X(y)/\sqrt{{\bf Var} X(y)},$ 
\begin{align}
\int_\mathcal{D}  e^{\sqrt{\mu} X(y) - \frac{\mu}{2} {\bf Var} X(y)} \, \varphi(y) dy = &
\sum\limits_{n=0}^\infty \frac{\mu^{n/2}}{n!} \!\!\int_\mathcal{D} H_n\bigl(\frac{X(y)}{\sqrt{{\bf Var} X(y)}}\bigr) 
[{\bf Var} X(y)]^{n/2} \, \varphi(y) dy, \nonumber \\
& \equiv \bar{\varphi}\, \Bigl(1+\sum\limits_{n=1}^\infty \frac{\mu^{n/2}}{n!} m_n(\varphi)\Bigr)
.
\end{align}
It follows that the $l$th moments of the total mass is then given in terms of the potential partition polynomials $C_{n,l},$
cf. Section 11.5 of \cite{Charal},
\begin{equation}
S_l[\varphi](\mu) = \bar{\varphi}^l\, \sum\limits_{k=0}^\infty \frac{\mu^{k/2}}{k!} {\bf E}\Bigl[ C_{k, l}\bigl(m_1(\varphi),\cdots,
m_k(\varphi)\bigr)\Bigr],
\end{equation}
where the polynomials $C_{n,l},$ are defined by the formal power series identity
\begin{equation}
\Bigl(1+\sum\limits_{n=1}^\infty m_n t^n/n!\Bigr)^l = \sum\limits_{k=0}^\infty  \frac{t^k}{k!} \, C_{k, l}(m_1,\cdots,m_k).
\end{equation}
It remains to observe that the potential polynomials $C_{n,l}$ are also polynomials in $l,$ cf. Eq. (11.27) in \cite{Charal}.
It follows that the coefficients of the expansion of the $l$th moment $S_l[\varphi](\mu)$ in $\mu$ are polynomials in $l.$
Finally, the result follows from expanding the log-moment $\log S_l[\varphi](\mu)$ in powers of $\mu$ using
the logarithmic potential polynomials, cf. Section 11.4 of \cite{Charal}. \qed
\end{proof}
The key element of the derivation of Eq. \eqref{fnplusoneidentity} is the recurrence of the expansion coefficients. 
Recall the definition of the complete exponential Bell polynomials $Y_n(x_1\cdots x_n),$
\begin{equation}
\exp\Bigl(\sum\limits_{k=1}^\infty x_k \frac{t^k}{k!}\Bigr) = \sum\limits_{n=0}^\infty Y_n(x_1\cdots x_n) \frac{t^n}{n!}.
\end{equation}
This is understood as the equality of formal power series. 
Then, by the definition of Bell polynomials, we have
\begin{equation}
\frac{\partial^n S_l}{\partial
\mu^n}[\varphi]\Big\vert_{\mu=0} = \bar{\varphi}^l\,Y_n\bigl(1! c_1(l),\cdots, n! c_n(l)\bigr).
\end{equation}
where it is understood that the coefficients $c_p(l)$ depend in $\varphi.$
The recurrence relation of the Bell polynomials, 
\begin{equation}\label{Bellrecurrence}
Y_{n+1} = \sum\limits_{r=0}^n \binom{n}{r} Y_{n-r} \,x_{r+1}, \; Y_0=1,
\end{equation}
implies the following recurrence for the expansion coefficients, cf. \cite{Me4}.
\begin{lemma}\label{reclemma}
The expansion coefficients
are uniquely determined by 
\begin{gather}
H_{n+1, k}(\varphi) = \bar{\varphi}^k A_{n, k}+
\sum\limits_{r=0}^{n-1}
\binom{n}{r}\sum\limits_{t=2}^k \bar{\varphi}^{k-t} H_{n-r, \,t}(\varphi)\, B_{r, \,t, \,k}, \,\,\,n\geq 0,\, k\geq 2, \\
A_{n, k} \triangleq  (-1)^k\frac{(n+1)!}{k!} \sum\limits_{l=2}^k (-1)^l\binom{k}{l} c_{n+1}(l), \\
B_{r, \,t, \,k} \triangleq (-1)^k
t!\frac{(r+1)!}{k!}\sum\limits_{l=t}^k (-1)^l
\binom{k}{l}\binom{l}{t} c_{r+1}(l).
\end{gather}
\end{lemma}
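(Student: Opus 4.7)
The plan is to compute $H_{n+1,k}(\varphi)$ directly from its definition in Eq. \eqref{HnkS} and to reduce it to the stated form by applying the Bell polynomial recurrence \eqref{Bellrecurrence} followed by a binomial inversion. To carry this out, I first record that, since $S_l[\varphi](\mu)=\bar{\varphi}^l\exp\bigl(\sum_{p\geq1}c_p(l)\mu^p\bigr)$, a standard Fa\`a di Bruno / exponential generating function argument gives
\begin{equation*}
\frac{\partial^{n+1}S_l}{\partial\mu^{n+1}}\Big\vert_{\mu=0}=\bar{\varphi}^l\,Y_{n+1}\bigl(1!c_1(l),\dots,(n+1)!c_{n+1}(l)\bigr),
\end{equation*}
and the same formula with $n+1$ replaced by $n-r$. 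Plugging the recurrence \eqref{Bellrecurrence} for $Y_{n+1}$ into this identity and re-expressing the lower-order Bell polynomials as lower-order derivatives of the moments yields
\begin{equation*}
\frac{\partial^{n+1}S_l}{\partial\mu^{n+1}}\Big\vert_{\mu=0}=\sum_{r=0}^{n}\binom{n}{r}(r+1)!\,c_{r+1}(l)\,\frac{\partial^{n-r}S_l}{\partial\mu^{n-r}}\Big\vert_{\mu=0}.
\end{equation*}

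Next, I substitute this identity into the defining expression \eqref{HnkS} for $H_{n+1,k}(\varphi)$. The term $r=n$ contains no derivative (the $0$-th derivative of $S_l$ at $\mu=0$ is simply $\bar{\varphi}^l$), so that contribution factors as
\begin{equation*}
\frac{(-1)^k(n+1)!}{k!}\sum_{l=2}^{k}(-1)^l\binom{k}{l}c_{n+1}(l)\,\bar{\varphi}^k\;=\;\bar{\varphi}^k\,A_{n,k},
\end{equation*}
matching the first term on the right-hand side of the recurrence. For the remaining $r=0,\dots,n-1$, I need to re-express $\frac{\partial^{n-r}S_l}{\partial\mu^{n-r}}|_{\mu=0}$ in terms of the coefficients $H_{n-r,t}(\varphi)$. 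This is a standard binomial inversion: writing $D_l\triangleq\partial^{n-r}S_l/\partial\mu^{n-r}|_{\mu=0}$, the formula \eqref{HnkS} reads $(-1)^t t!\,H_{n-r,t}(\varphi)=\sum_{l=2}^{t}(-1)^l\binom{t}{l}\bar{\varphi}^{t-l}D_l$, whose Abel-type inverse is $D_l=\sum_{t=2}^{l}\binom{l}{t}\bar{\varphi}^{l-t}\,t!\,H_{n-r,t}(\varphi)$.

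Substituting this inverse formula, interchanging the order of the summations over $l$ and $t$ (the constraint becomes $2\leq t\leq l\leq k$), and collecting the $l$-sum into the coefficient
\begin{equation*}
\frac{(-1)^k\,t!\,(r+1)!}{k!}\sum_{l=t}^{k}(-1)^l\binom{k}{l}\binom{l}{t}c_{r+1}(l)\;=\;B_{r,t,k}
\end{equation*}
recovers exactly the stated recurrence. The whole argument is essentially formal algebra once the Bell polynomial recurrence and binomial inversion are in place; the only step requiring a bit of care is the inversion, since the sums over $l$ start at $l=2$ rather than $l=0$, but this is harmless because the missing $l=0,1$ terms are annihilated by the $\binom{t}{l}\bar{\varphi}^{t-l}$ kernel when paired with $H_{n-r,t}$ for $t\geq 2$. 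Uniqueness of the $H_{n+1,k}(\varphi)$ determined by the recurrence with initial data $H_{0,k}=0$ follows by induction on $n$. \qed
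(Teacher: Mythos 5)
Your proof is correct and follows essentially the route the paper itself indicates but does not spell out (it merely says the Bell-polynomial recurrence ``implies'' the lemma, deferring to [Me4]): express $\partial^{n+1}_\mu S_l|_{\mu=0}$ via $Y_{n+1}$, apply the Bell recurrence, isolate the $r=n$ term as $\bar{\varphi}^k A_{n,k}$, and recover the $H_{n-r,t}$ by binomial inversion of Eq. \eqref{HnkS}; the restricted inversion over $l,t\geq 2$ is legitimate precisely because $S_0\equiv 1$ and $S_1\equiv\bar{\varphi}$ are constant in $\mu$, so the order-$(n-r)\geq 1$ derivatives vanish for $l=0,1$. One minor inaccuracy in your closing remark: $H_{0,k}(\varphi)=\frac{(-1)^k(k-1)}{k!}\bar{\varphi}^k\neq 0$ in general, but this is harmless since the $r$-sum only involves $H_{m,t}$ with $1\leq m\leq n$, so the recurrence is self-starting at $n=0$ and needs no initial data.
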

We can now put all the pieces together. 
\begin{proof}
The property of polynomial dependence of $c_p(l)$ on $l$ means that we can write 
\begin{equation}\label{cplpol}
c_p(l) = R_p\bigl(\frac{d}{dz}\bigr)\Big|_{z=0}  e^{zl}
\end{equation}
for a polynomial $R_p(z)$ of some degree depending on $p.$  Given the recurrence in Lemma \ref{reclemma} and the definition of 
$f_n(q)$ in Eq. \eqref{fnq}, we can write 
\begin{equation}\label{fnplusone}
f_{n+1}(q) = \bar{\varphi}^q\Bigl[\sum_{k=2}^\infty (q)_k\, A_{n,k}\Bigr] + \sum\limits_{r=0}^{n-1} \binom{n}{r} \sum_{t=2}^\infty \bar{\varphi}^{q-t} 
H_{n-r, \,t} \Bigl[\sum_{k=t}^\infty (q)_k \,B_{r, \,t, \,k}\Bigr].
\end{equation}
We will now show that the representation of coefficients in Eq. \eqref{cplpol} implies the identities
\begin{align}
\sum_{k=2}^\infty (q)_k \,A_{n,k} = & (n+1)! \,c_{n+1}(q), \label{Anksum}\\
\sum_{k=t}^\infty (q)_k \,B_{r, t, k} = & (q)_t (r+1)! \,c_{r+1}(q). \label{Brtksum}
\end{align}
In fact, substituting the expression for $A_{n,k}$ in Lemma \ref{reclemma} and using the identity
\begin{equation}
\sum\limits_{l=2}^k (-1)^l \binom{k}{l} e^{zl} = (1-e^z)^k+ke^z-1,
\end{equation}
we get the expression
\begin{equation}
\sum_{k=2}^\infty (q)_k \,A_{n,k} = (n+1)! \,  R_{n+1}\bigl(\frac{d}{dz}\bigr)\Big|_{z=0} \sum_{k=2}^\infty \frac{(-1)^k}{k!} (q)_k 
\bigl[(1-e^z)^k + k e^z -1\bigr],
\end{equation}
and Eq. \eqref{Anksum} follows from  the identity
\begin{equation}
\sum\limits_{k=2}^\infty \frac{a^k}{k!} (q)_k = (1+a)^q-qa-1.
\end{equation}
Similarly, using the identity
\begin{equation}
\sum\limits_{l=t}^k (-1)^l \binom{k}{l}\binom{l}{t} e^{zl}=(-1)^t
\binom{k}{t} e^{zt} (1-e^z)^{k-t},
\end{equation}
we obtain the expression
\begin{equation}
\sum_{k=t}^\infty (q)_k \,B_{r, \,t, \,k} = (r+1)! \, R_{r+1}\bigl(\frac{d}{dz}\bigr)\Big|_{z=0} e^{zt} \sum\limits_{k=t}^\infty (q)_k \frac{(-1)^{k-t}}{(k-t)!} (1-e^z)^{k-t},
\end{equation}
and Eq. \eqref{Brtksum} follows from the identity
\begin{equation}
\sum\limits_{k=t}^\infty \frac{a^{k-t}}{(k-t)!} (q)_k = (q)_t (1+a)^{q-t}.
\end{equation}
Having established Eqs. \eqref{Anksum} and \eqref{Brtksum}, it remains to observe that
Eq. \eqref{fnplusoneidentity} is now equivalent to Eq. \eqref{fnplusone} and Eq. \eqref{intermittencyMellin}
follows from the recurrence relation of Bell polynomials in Eq. \eqref{Bellrecurrence}.\qed
\end{proof}

\section{Bacry-Muzy GMC Measure on the Circle}
\noindent
In this section we will apply the general theory to the special case of the Bacry-Muzy GMC  on the circle. 
For our purposes in this section it is convenient to introduce the quantity
\begin{equation}
\tau\triangleq \frac{2}{\mu}.
\end{equation}
Let $r(t)$ be as in Eq. \eqref{cir}. 
Recall the Morris integral, cf. Chapters 3 and 4 of \cite{ForresterBook},
\begin{gather}
\int\limits_{[-\frac{1}{2},\,\frac{1}{2}]^n} \prod\limits_{l=1}^n  e^{ 2\pi i \theta_l(\lambda_1-\lambda_2)/2}  |1+e^{2\pi i\theta_l}|^{\lambda_1+\lambda_2} \,
\prod\limits_{k<l}^n |e^{2\pi i \theta_k}-e^{2\pi i\theta_l}|^{-2/\tau} \,d\theta = \nonumber \\ =\prod\limits_{j=0}^{n-1} \frac{\Gamma(1+\lambda_1+\lambda_2- \frac{j}{\tau})\,\Gamma(1-\frac{(j+1)}{\tau})}{\Gamma(1+\lambda_1- \frac{j}{\tau})\,\Gamma(1+\lambda_2- \frac{j}{\tau})\,\Gamma(1-\frac{1}{\tau})} \label{morris2}.
\end{gather}
To bring it to the form of Eq. \eqref{momentsint}, we let
\begin{equation}
\varphi(s) = |1-e^{2\pi i s}|^{2\lambda}, 
\end{equation}
for some $\lambda\geq 0$ so that the moments are
\begin{align}
S_n[\varphi](\mu) = & \int\limits_{[0,\,1]^n} \prod\limits_{l=1}^n  |1-e^{2\pi i\theta_l}|^{2\lambda} \,
\prod\limits_{k<l}^n |e^{2\pi i \theta_k}-e^{2\pi i\theta_l}|^{-\mu} \,d\theta, \\
= & \prod\limits_{j=0}^{n-1} \frac{\Gamma(1+2\lambda- \frac{j}{\tau})\,\Gamma(1-\frac{(j+1)}{\tau})}{\Gamma(1+\lambda - \frac{j}{\tau})^2\,\Gamma(1-\frac{1}{\tau})}.
\label{momlambda}
\end{align}
\begin{lemma}[Log Moment Expansion]\label{logmomentscircle}
The expansion Eq. \eqref{cpl} of the log moments in $\mu$ near $\mu=0$ is 
\begin{gather}
\log S_l[\varphi](\mu) = l\Bigl(\log\Gamma(1+2 \lambda)-2\log\Gamma(1+\lambda)\Bigr) + \sum\limits_{p=1}^\infty c_p(l) \mu^p, \\
c_p(l) = \frac{1}{p2^p} \Bigl[\bigl(\zeta(p,\,1+2\lambda)-2\zeta(p, 1+\lambda)\bigr)\frac{B_{p+1}(l)-B_{p+1}}{p+1}+\nonumber \\ +
\zeta(p)\frac{B_{p+1}(l+1)-B_{p+1}}{p+1}-l\zeta(p)\Bigr].
\end{gather}
\end{lemma}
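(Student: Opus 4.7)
The plan is to prove the lemma by direct expansion of the closed-form expression for the moments in Eq.~\eqref{momlambda}. Taking logarithms turns the product into a sum, so
\begin{align}
\log S_l[\varphi](\mu) = \sum_{j=0}^{l-1}\Bigl[ & \log\Gamma(1+2\lambda - j/\tau) - 2\log\Gamma(1+\lambda - j/\tau) \nonumber \\
 & + \log\Gamma(1-(j+1)/\tau) - \log\Gamma(1-1/\tau)\Bigr],
\end{align}
and the task reduces to expanding each $\log\Gamma$ factor in powers of $1/\tau = \mu/2$ around a fixed base point ($a = 1+2\lambda$, $a=1+\lambda$, or $a=1$) and then summing over $j$.

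First I would Taylor expand $\log\Gamma(a+z) = \log\Gamma(a) + \psi(a)z + \sum_{p\geq 2} \frac{\psi^{(p-1)}(a)}{p!}z^p$ and substitute the polygamma evaluation $\psi^{(p-1)}(a) = (-1)^p (p-1)!\,\zeta(p,a)$ for $p\geq 2$, so that the coefficient of $z^p$ becomes $(-1)^p \zeta(p,a)/p$. Applied to $z = -j/\tau$ (and to $z = -(j+1)/\tau$ for the third factor, with $\zeta(p,1)=\zeta(p)$), this produces four single-variable series in $1/\tau$ with coefficients that are pure powers of $j$ or $j+1$ weighted by $\zeta(p,\cdot)$. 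The $\mu^0$ (i.e.\ $\tau^{-0}$) contribution collapses to $l\bigl(\log\Gamma(1+2\lambda)-2\log\Gamma(1+\lambda)\bigr)$, matching the stated constant term.

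Second I would carry out the $j$-sums by invoking Faulhaber's identity $\sum_{j=0}^{l-1} j^p = (B_{p+1}(l)-B_{p+1})/(p+1)$ for the first two Gamma factors and $\sum_{j=0}^{l-1}(j+1)^p = (B_{p+1}(l+1)-B_{p+1})/(p+1)$ for the third, while the fourth factor contributes $-l$ times the expansion of $\log\Gamma(1-1/\tau)$, producing the isolated $-l\zeta(p)$ piece. After converting $\tau^{-p} = 2^{-p}\mu^p$, the coefficient of $\mu^p$ assembles into exactly the three-term structure displayed for $c_p(l)$.

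The main subtlety, and the step I expect to require the most care, is the $p=1$ case: individually $\zeta(1,1+2\lambda)$, $\zeta(1,1+\lambda)$ and $\zeta(1)$ are all divergent, while the true coefficient of $\mu$ involves only $\psi(1+2\lambda)$, $\psi(1+\lambda)$ and $\gamma=-\psi(1)$. The resolution is that these singularities enter only through the combination
\begin{equation}
\zeta(s,1+2\lambda) - 2\zeta(s,1+\lambda) + \zeta(s),
\end{equation}
whose simple poles at $s=1$ cancel (residues $1-2+1=0$) and whose finite part is $-\psi(1+2\lambda)+2\psi(1+\lambda)+\gamma$. Since $(B_2(l)-B_2)/2 = (B_2(l+1)-B_2)/2 - l = l(l-1)/2$, the three singular pieces in the formula for $c_1(l)$ combine into exactly this regular combination, reproducing the direct Taylor calculation. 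So the statement of the lemma is to be read with $c_1(l)$ interpreted as the limit $p\to 1$ of the displayed expression, or equivalently with the understanding that the $\zeta(1,\cdot)$-terms appear only through the pole-subtracted combination that yields digamma values. Once this convention is recorded, every remaining verification is a routine bookkeeping exercise.
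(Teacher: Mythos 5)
Your proposal is correct and follows essentially the same route as the paper: take logarithms of the closed-form moments in Eq.~\eqref{momlambda}, expand each $\log\Gamma$ via $\log\Gamma(a+z)=\log\Gamma(a)+\sum_{p\ge 1}\frac{(-z)^p}{p}\zeta(p,a)$, and perform the $j$-sums with the Bernoulli-polynomial (Faulhaber) identity. Your careful discussion of the $p=1$ divergences is exactly what the paper compresses into the convention $\zeta(1,a)=-\psi(a)$, $\zeta(p,1)=\zeta(p)$, and your observation that the residues cancel with weights $1-2+1=0$ confirms that this convention is consistent.
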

\begin{proof}
This is a simple corollary of Eq. \eqref{momlambda} and the following formulas involving the Hurwitz zeta function and Bernoulli polynomials,
\begin{align}
\log\Gamma(a+z) = & \log\Gamma(a)+\sum\limits_{p=1}^\infty \frac{(-z)^p}{p} \zeta(p,a), \\
\sum\limits_{j=x}^y j^p = &\frac{B_{p+1}(y+1)-B_{p+1}(x)}{p+1},
\end{align}
and the convention $\zeta(1, a)=-\psi(a),$ $\zeta(p, 1)=\zeta(p).$ \qed
\end{proof}
By Theorem \ref{mellin} we know the asymptotic expansion of the Mellin transform. 
We now wish to construct a positive probability distribution having the properties that its moments are given by Eq. \eqref{momlambda}
and the asymptotic expansion of its Mellin transform coincides with the series in Theorem \ref{mellin}.
\begin{theorem}[Morris Integral Probability Distribution]\label{MorrisIntegralProb}
The function 
\begin{align}
\mathfrak{M}(q\,|\tau,\,\lambda_1,\,\lambda_2)=&\frac{\tau^{\frac{q}{\tau}}}{\Gamma^q\bigl(1-\frac{1}{\tau}\bigr)}
\frac{\Gamma_2(\tau(\lambda_1+\lambda_2+1)+1-q\,|\,\tau)}{\Gamma_2(\tau(\lambda_1+\lambda_2+1)+1\,|\,\tau)}
\frac{\Gamma_2(-q+\tau\,|\,\tau)}{\Gamma_2(\tau\,|\,\tau)}\times \nonumber \\ & \times
\frac{\Gamma_2(\tau(1+\lambda_1)+1\,|\,\tau)}{\Gamma_2(\tau(1+\lambda_1)+1-q\,|\,\tau)}
\frac{\Gamma_2(\tau(1+\lambda_2)+1\,|\,\tau)}{\Gamma_2(\tau(1+\lambda_2)+1-q\,|\,\tau)} \label{thefunction}
\end{align}
reproduces the product in Eq. \eqref{morris2} when $q=n<\tau$
and is the Mellin transform of the distribution
\begin{align}
M_{(\tau, \lambda_1, \lambda_2)} = & \frac{\tau^{1/\tau}}{\Gamma(1-1/\tau)} \,\beta^{-1}_{22}(\tau, b_0=\tau,\,b_1=1+\tau \lambda_1, \,b_2=1+\tau \lambda_2) \times \nonumber \\ & \times
\beta_{1,0}^{-1}(\tau, b_0=\tau(\lambda_1+\lambda_2+1)+1), \label{thedecompcircle}
\end{align}
where $\beta^{-1}_{22}(a, b)$ is the inverse Barnes beta of type $(2,2)$ and $\beta^{-1}_{1,0}(a, b)$ is the independent inverse Barnes beta of type $(1,0).$ In particular, $\log M_{(\tau, \lambda_1, \lambda_2)}$ is infinitely divisible.
\end{theorem}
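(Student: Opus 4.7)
The plan is to verify the three claims of the theorem in turn: first, that the function $\mathfrak{M}(q\,|\,\tau,\lambda_1,\lambda_2)$ in Eq.~\eqref{thefunction} reproduces the Morris integral product of Eq.~\eqref{morris2} at every positive integer $q=n<\tau$; second, that it is the Mellin transform of the specific random variable displayed in Eq.~\eqref{thedecompcircle}; and third, that $\log M_{(\tau,\lambda_1,\lambda_2)}$ is infinitely divisible.

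For the first claim the key tool is the functional equation of the Barnes double gamma,
\[
\frac{\Gamma_2(z+1\,|\,\tau)}{\Gamma_2(z\,|\,\tau)} \;=\; \frac{\sqrt{2\pi}\,\tau^{-z/\tau+1/2}}{\Gamma(z/\tau)},
\]
which allows one to telescope each of the four ratios $\Gamma_2(A-q\,|\,\tau)/\Gamma_2(A\,|\,\tau)$ appearing in Eq.~\eqref{thefunction} at $q=n$ into a product of $n$ ordinary Gamma functions. The powers of $\tau$ and factors of $\sqrt{2\pi}$ accumulated from the four telescopings combine with the explicit prefactor $\tau^{n/\tau}/\Gamma^n(1-1/\tau)$ and cancel it exactly, and the remaining $\Gamma$-factors, after a shift of the summation index, regroup into $\prod_{j=0}^{n-1}$ of the ratio displayed in Eq.~\eqref{morris2}. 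This step is algebraic bookkeeping, but it is the place where the specific combination of $\Gamma_2$'s in Eq.~\eqref{thefunction} is pinned down by the Morris integral.

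For the second claim one invokes the known Mellin transforms of inverse Barnes beta distributions from \cite{Me13}, \cite{Me14}, and the review \cite{Me16}: the type $(2,2)$ inverse Barnes beta with parameters $(\tau,b_0,b_1,b_2)$ has Mellin transform proportional to
\[
\frac{\Gamma_2(b_0-q\,|\,\tau)}{\Gamma_2(b_0\,|\,\tau)}\prod_{i=1}^{2}\frac{\Gamma_2(b_i\,|\,\tau)}{\Gamma_2(b_i-q\,|\,\tau)},
\]
while the independent type $(1,0)$ inverse Barnes beta contributes the single ratio $\Gamma_2(b_0-q\,|\,\tau)/\Gamma_2(b_0\,|\,\tau)$ with its own normalization. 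Substituting the specific parameter values $b_0=\tau$, $b_1=1+\tau\lambda_1$, $b_2=1+\tau\lambda_2$ for the type $(2,2)$ factor and $b_0=\tau(\lambda_1+\lambda_2+1)+1$ for the type $(1,0)$ factor, and using independence to multiply the two Mellin transforms together with the scalar prefactor $\tau^{1/\tau}/\Gamma(1-1/\tau)$, reproduces $\mathfrak{M}(q\,|\,\tau,\lambda_1,\lambda_2)$ of Eq.~\eqref{thefunction} exactly.

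The third claim is then immediate: the laws of $\log\beta^{-1}_{22}$ and $\log\beta^{-1}_{10}$ are known to be infinitely divisible (established in \cite{Me13}, \cite{Me14}), and the class of infinitely divisible laws is closed under finite independent sums. The main obstacle is the combinatorial bookkeeping in Step 1, where four telescoping products must align so that the accumulated powers of $\tau$, the factors of $\sqrt{2\pi}$, and the $\Gamma(1-1/\tau)^{-n}$ term all reconcile exactly to produce the Morris product with no residual factor; a secondary, though easier, issue is fixing the normalization constants in the two Barnes beta Mellin transforms so that the product matches Eq.~\eqref{thefunction} on the nose rather than merely up to a scalar.
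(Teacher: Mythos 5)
The first thing to note is that the paper does not actually prove this theorem: immediately after the statement it says ``The proof is given in \cite{Me16}'', so there is no in-paper argument to compare against. Your overall strategy --- telescoping the four $\Gamma_2$-ratios with the functional equation $\Gamma_2(z+1\,|\,\tau)/\Gamma_2(z\,|\,\tau)=\sqrt{2\pi}\,\tau^{1/2-z/\tau}/\Gamma(z/\tau)$ to recover the Morris product, then factoring $\mathfrak{M}$ as a product of Mellin transforms of independent Barnes beta factors, then invoking closure of infinite divisibility under independent sums --- is the right one and is what the cited reference does. Your Step 1 is sound: the $(2\pi)^{\pm n/2}$ contributions cancel between the two numerator-oriented and two denominator-oriented ratios, the accumulated power of $\tau$ is $n(A_1+A_2-B_1-B_2)/\tau=-n/\tau$ with $A_1=\tau(\lambda_1+\lambda_2+1)+1$, $A_2=\tau$, $B_i=\tau(1+\lambda_i)+1$, which kills the prefactor $\tau^{n/\tau}$, and the surviving ordinary gammas reindex to the product in Eq. \eqref{morris2}.

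Step 2, however, does not close as written, because the Mellin transforms you quote are inconsistent with the parameter values in Eq. \eqref{thedecompcircle}. With $b_1=1+\tau\lambda_1$ your type-$(2,2)$ formula produces the factor $\Gamma_2(b_1-q\,|\,\tau)=\Gamma_2(1+\tau\lambda_1-q\,|\,\tau)$, whereas Eq. \eqref{thefunction} requires $\Gamma_2(\tau(1+\lambda_1)+1-q\,|\,\tau)=\Gamma_2(b_0+b_1-q\,|\,\tau)$; these arguments differ by $\tau$, so the product cannot match ``on the nose.'' The correct $(2,2)$ Mellin transform is an alternating product over all four subsets $S\subseteq\{1,2\}$ with arguments $b_0+\sum_{i\in S}b_i-q$; your version has only three factors and omits the full-subset one, whose argument $b_0+b_1+b_2-q=\tau(\lambda_1+\lambda_2+1)+2-q$ is off by one from the first ratio in Eq. \eqref{thefunction}. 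The role of the independent $\beta_{1,0}^{-1}$ factor is precisely to repair this: a type-$(1,0)$ Barnes beta is a single-gamma object whose Mellin transform is a ratio of ordinary gamma functions times a power of $\tau$ (not the $\Gamma_2$-ratio you assign to it), and via the modular functional equation it shifts the full-subset argument from $\tau(\lambda_1+\lambda_2+1)+2$ down to $\tau(\lambda_1+\lambda_2+1)+1$ while supplying the residual $\tau^{-q/\tau}$. So the difficulty you defer to the end is not merely ``fixing normalization constants'': an entire $\Gamma_2$-ratio must be converted using the functional equation, and with the formulas as you state them the product of the two transforms does not equal $\mathfrak{M}(q\,|\,\tau,\lambda_1,\lambda_2)$.
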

We refer the reader to \cite{Me14} for a review of the double gamma function $\Gamma_2(z\,|\,\tau)$ and to \cite{Me16} for a review of Barnes beta
distributions. 
The proof is given in \cite{Me16}. In the special case of $\lambda_1=\lambda_2=0$ this result first appeared in \cite{FyoBou}.
\begin{theorem}[Asymptotic Expansion]
The asymptotic expansion of $\log\mathfrak{M}(q\,|\tau,\,\lambda_1,\,\lambda_2)$ in $\tau$ in the limit $\tau\rightarrow \infty$ is
\begin{gather}
\log\mathfrak{M}(q\,|\tau,\,\lambda_1,\,\lambda_2)\thicksim q\Bigl(\log\Gamma(1+\lambda_1+\lambda_2)-\log\Gamma(1+\lambda_1)-\log\Gamma(1+\lambda_2)\Bigr) + 
\nonumber \\ + \sum\limits_{p=1}^\infty 
\frac{1}{p\tau^p} \Bigl[\bigl(\zeta(p,\,1+\lambda_1+\lambda_2)-\zeta(p, 1+\lambda_1)-\zeta(p, 1+\lambda_2)\bigr)\frac{B_{p+1}(q)-B_{p+1}}{p+1}+\nonumber \\ +
\zeta(p)\frac{B_{p+1}(q+1)-B_{p+1}}{p+1}-q\zeta(p)\Bigr].
\end{gather}
\end{theorem}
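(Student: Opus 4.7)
The plan is to prove the expansion first for integer $q=n<\tau$ using the Morris product formula supplied by Theorem \ref{MorrisIntegralProb}, and then to extend the result to complex $q$ by a polynomial-interpolation argument applied separately at each order in $1/\tau$.

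At integer $n$, $\mathfrak{M}(n|\tau,\lambda_1,\lambda_2)$ is the ratio of ordinary gamma functions on the right-hand side of \eqref{morris2}, so the six summands of $\log\mathfrak{M}(n|\tau,\lambda_1,\lambda_2)$ indexed by $j=0,\ldots,n-1$ can each be expanded in powers of $1/\tau$ via the Hurwitz zeta series $\log\Gamma(a+z)=\log\Gamma(a)+\sum_{p\geq 1}(-z)^p\zeta(p,a)/p$, with the convention $\zeta(1,a)=-\psi(a)$ already in force in Lemma \ref{logmomentscircle}. The $\tau$-independent part collapses to $n(\log\Gamma(1+\lambda_1+\lambda_2)-\log\Gamma(1+\lambda_1)-\log\Gamma(1+\lambda_2))$ because $\log\Gamma(1)=0$. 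The sums over $j$ of the higher-order contributions are then closed by the Bernoulli polynomial identities $\sum_{j=0}^{n-1}j^p=(B_{p+1}(n)-B_{p+1})/(p+1)$ and $\sum_{j=0}^{n-1}(j+1)^p=(B_{p+1}(n+1)-B_{p+1})/(p+1)$, together with the obvious $\sum_{j=0}^{n-1}1=n$ from the constant summand $-\log\Gamma(1-1/\tau)$. Regrouping by power of $1/\tau$ reproduces, with $n$ in place of $q$, exactly the bracket asserted in the theorem.

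To pass from integer $n$ to complex $q$, observe that each coefficient of $1/\tau^p$ just produced is a polynomial in $n$ (through the Bernoulli polynomial $B_{p+1}(n)$). On the other hand, $\log\mathfrak{M}(q|\tau,\lambda_1,\lambda_2)$ is, by \eqref{thefunction}, a combination of ratios of double gamma functions, and so admits an asymptotic expansion in $1/\tau$ whose $p$th coefficient is an analytic function of $q$. The polynomial character of these coefficients in $q$ may be established either from the Barnes-type asymptotic expansion of $\log\Gamma_2(z|\tau)$ as $\tau\to\infty$ applied directly to \eqref{thefunction}, or, more in the spirit of this paper, by invoking Theorem \ref{mellin} together with Lemma \ref{polyndepend} applied to the Morris integral probability distribution $M_{(\tau,\lambda_1,\lambda_2)}$, whose Mellin transform is $\mathfrak{M}$. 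Two polynomials in $q$ that coincide at the infinitely many integer points $n<\tau$ must be identical, yielding the asserted expansion for complex $q$.

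The main obstacle is the presence of the prefactor $\tau^{q/\tau}$ in \eqref{thefunction}, which contributes a $q(\log\tau)/\tau$ term for non-integer $q$ that is absent from the pure $1/\tau$ expansion stated in the theorem. Its cancellation against matching $\log\tau$ pieces in the Barnes asymptotic expansions of the $\Gamma_2$ ratios is forced automatically by the analyticity of both sides in $q$ and their agreement at integer moments, so the polynomial-interpolation route sidesteps an explicit bookkeeping of these logarithms that would otherwise be the most delicate part of a direct proof. \qed
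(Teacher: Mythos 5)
Your computation at integer $q=n<\tau$ is sound: expanding each gamma factor of the Morris product \eqref{morris2} via $\log\Gamma(a+z)=\log\Gamma(a)+\sum_{p\geq 1}(-z)^p\zeta(p,a)/p$ and closing the sums over $j$ with the Bernoulli identities does reproduce the stated bracket with $n$ in place of $q$; this is the same calculation as Lemma \ref{logmomentscircle}, merely with general $\lambda_1,\lambda_2$. The genuine gap is in the passage to complex $q$. Your interpolation argument needs, as an \emph{a priori} input, that the coefficient of $\tau^{-p}$ in the expansion of $\log\mathfrak{M}(q\,|\,\tau,\lambda_1,\lambda_2)$ is a \emph{polynomial} in $q$ --- analyticity alone is useless for interpolation from the integers, since $\sin \pi q$ vanishes at every one of them. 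Of the two routes you offer for this input, the first (Barnes-type asymptotics of $\log\Gamma_2(z\,|\,\tau)$ applied to \eqref{thefunction}) already computes the coefficients explicitly for complex $q$, so it proves the theorem outright and renders the interpolation step redundant; this is essentially what the paper does, rewriting $\mathfrak{M}$ in terms of the Alexeiewsky--Barnes $G$-function as in \eqref{MG} and applying an integral-representation lemma for $\log\bigl(G(1+a+\tau\,|\,\tau)/G(1-q+a+\tau\,|\,\tau)\bigr)$ to each of the four ratios. The second route is circular: Theorem \ref{mellin} and Lemma \ref{polyndepend} are statements about the GMC measure, and transporting them to $\mathfrak{M}$ presupposes that $\mathfrak{M}$ is the Mellin transform of the total mass of the GMC measure on the circle --- which is precisely Conjecture \ref{GMCCIR}, the statement that this theorem (by matching the asymptotic expansion of $\mathfrak{M}$ against the intermittency expansion) is designed to support.

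The same issue undermines your handling of the prefactor $\tau^{q/\tau}$. The cancellation of its $q(\log\tau)/\tau$ contribution against the $\log\tau$ pieces of the $\Gamma_2$ asymptotics is invisible at integer $q$, where everything collapses to the gamma product, so it cannot be ``forced by agreement at integer moments'' unless the polynomial structure of the $\log\tau$-coefficients has already been established independently --- and establishing that is again the direct computation you were trying to avoid. In short: your integer-$n$ part is correct, but the extension to complex $q$ either collapses into the paper's direct argument or rests on a circular appeal to the conjecture under test.
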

\begin{proof}
The first step is to express $\mathfrak{M}(q\,|\tau,\,\lambda_1,\,\lambda_2)$ in terms of the Alexeiewsky-Barnes $G-$function.
We have the identity
\begin{align}
\mathfrak{M}(q\,|\tau,\,\lambda_1,\,\lambda_2)=&\frac{1}{\Gamma^q\bigl(1-\frac{1}{\tau}\bigr)}
\frac{G(\tau(\lambda_1+\lambda_2+1)+1\,|\,\tau)}{G(\tau(\lambda_1+\lambda_2+1)+1-q\,|\,\tau)}
\frac{G(\tau\,|\,\tau)}{G(-q+\tau\,|\,\tau)}\times \nonumber \\ & \times
\frac{G(\tau(1+\lambda_1)+1-q\,|\,\tau)}{G(\tau(1+\lambda_1)+1\,|\,\tau)}
\frac{G(\tau(1+\lambda_2)+1-q\,|\,\tau)}{G(\tau(1+\lambda_2)+1\,|\,\tau)},\label{MG}
\end{align}
cf. \cite{MeIMRN} for the relationship between $\Gamma_2(z\,|\,\tau)$ and $G(z\,|\,\tau).$
\begin{lemma}
Let
\begin{equation}
I(q\,|\,a, \tau) \triangleq \int\limits_0^\infty
\frac{dx}{x}\frac{e^{-ax}}{e^{x\tau}-1}
\Bigl[\frac{e^{xq}-1}{e^{x}-1}
 -q-\frac{(q^2-q)}{2}x\Bigr].
\end{equation}
Then,  $I(q\,|\, a\tau, \tau)$ has the asymptotic expansion 
\begin{equation}
I(q\,|\,a\tau, \tau) \thicksim \sum\limits_{r=1}^\infty \frac{\zeta(r+1,
\,1+a)}{r+1}\Bigl(\frac{B_{r+2}(q)-B_{r+2}}{r+2}\Bigr)/\tau^{r+1}
\end{equation}
in the limit $\tau\rightarrow +\infty$ and 
\begin{equation}
I(q\,|\,a, \tau) =
\log\frac{G(1+a+\tau\,|\,\tau)}{G(1-q+a+\tau\,|\,\tau)}
-q\log\Bigl[\Gamma\bigl(1+\frac{a}{\tau}\bigr)\Bigr]+
\frac{(q^2-q)}{2\tau}\psi\bigl(1+\frac{a}{\tau}\bigr).
\end{equation}
\end{lemma}
The proof is given in \cite{MeIMRN}.
It remains to apply this lemma to each of the four ratios of the $G-$functions in Eq. \eqref{MG}
and collect the terms. \qed
\end{proof}

The Morris integral probability distribution thus has the required properties and so is naturally conjectured to be the
distribution of the total mass of the Bacry-Muzy GMC measure on the circle, cf. Conjecture \ref{GMCCIR} below.

\section{Conjectures and Open Questions}
\noindent In this section we will present a number of key conjectures and some open questions that are associated with our work.

\begin{conjecture}[Uniqueness]\label{C1}
Let $\bar{\varphi}=\int_{\mathcal{D}} \varphi(x)\,dx.$ The intermittency expansion, 
\begin{equation}
{\bf E}\Bigl[F\bigl(\int_{\mathcal{D}} \varphi(x)
\,M_\mu(dx)\bigr)\Bigr]=F(\bar{\varphi})+ \sum\limits_{n=1}^\infty
\frac{\mu^n}{n!}\Bigl[ \sum_{k=2}^{2n} F^{(k)}(\bar{\varphi})
H_{n,k}(\varphi)\Bigr],
\end{equation}
is convergent for sufficiently smooth $F(x)$ in a neighborhood of $\mu=0$ and its sum coincides with 
the left-hand side. In other words, the coefficients of the expansion $H_{n,k}(\varphi)$ capture the distribution uniquely.
\end{conjecture}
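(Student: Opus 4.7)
The plan is to prove the conjecture in two separate pieces: convergence of the formal series in a disc around $\mu=0$, and identification of its sum with $v(\mu, F) \triangleq {\bf E}[F(\int_\mathcal{D} \varphi(x)\,M_\mu(dx))]$. For the first piece I would work with the renormalized form of the coefficients given by the Corollary to Theorem \ref{IntRenormal}, namely $k!\,H_{n,k}(\varphi) = \partial_\mu^n|_{\mu=0}\,{\bf E}[(\int_\mathcal{D} \varphi\,M_\mu - \bar{\varphi})^k]$, together with the explicit integral representation
\[
k!\,H_{n,k}(\varphi) = \int_{\mathcal{D}^k} h_{n,k}(x)\,dx,
\]
derived in the proof of Theorem \ref{IntRenormal}. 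Since $h_{n,k}$ is a symmetrized sum of products of at most $n$ values of $g$ connecting all $k$ indices, and since the singular part of $g$ is only logarithmic (hence in $L^p$ for every finite $p$), I would combine Young's inequality and the multinomial count of admissible index patterns to obtain bounds of the shape $|H_{n,k}(\varphi)| \le C(\varphi)^n\,n!\,A^k/k!$ on some disc $|\mu|<\mu^*$. Inserted into the series, and using the derivative bounds that sufficient smoothness of $F$ (say, real analyticity near $\bar{\varphi}$, or Schwartz decay for the Fourier transform) supplies on $F^{(k)}(\bar{\varphi})$, this gives absolute convergence on a definite neighborhood of $\mu=0$.

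For the second piece, I would pass through the regularization $\omega_{\mu,\varepsilon}$. For each fixed $\varepsilon>0$ the functional $v_\varepsilon(\mu, F)$ is Gaussian-smooth and in fact an entire function of $\mu$, because $g_\varepsilon = \theta_\varepsilon\star g$ is bounded and smooth, so $\mu\mapsto v_\varepsilon(\mu,F)$ coincides with its Taylor series at $\mu=0$ on all of $\mathbb{C}$. By Theorem \ref{theoremdiff} applied at $\varepsilon>0$, the Taylor coefficients are precisely $H_{n,k}^{(\varepsilon)}(\varphi)$ paired with $F^{(k)}(\bar{\varphi})$. Then I would establish two convergences as $\varepsilon\to 0$: first, $H_{n,k}^{(\varepsilon)}(\varphi)\to H_{n,k}(\varphi)$ by dominated convergence in the integral representation (the dominating function being independent of $\varepsilon$ because $g_\varepsilon$ is pointwise bounded by a fixed logarithm), uniformly in $n,k$ at a rate that preserves the summability estimate above; and second, $v_\varepsilon(\mu,F)\to v(\mu, F)$ locally uniformly in $\mu$ on $[0,\mu_c)$, which follows from the standard martingale convergence of $M_{\mu,\varepsilon}$ together with uniform integrability of $F(\int \varphi\,M_{\mu,\varepsilon})$ granted by smoothness and growth constraints on $F$. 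The two convergences together let me exchange sum and limit and identify $v(\mu,F)$ with its convergent intermittency expansion.

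The hard part, by a significant margin, will be the bound on $H_{n,k}$. Two features make it delicate: when $k$ is near the upper bound $2n$ enforced by the renormalization identity, the number of admissible terms in the symmetrized multinomial grows combinatorially, so the naïve $L^\infty$ bound on $g_\varepsilon$ loses the gain that renormalization has bought; and the desired radius of convergence $\mu^*$ should ideally equal $\mu_c = 2d$ to match the phase boundary, which requires the estimates to be essentially sharp. A probable route through this obstacle is to replace crude bounds on $\int (\sum g(x_i,x_j))^n$ by the sharper evaluations available through hypercontractivity of the Ornstein--Uhlenbeck semigroup acting on Wick exponentials, or through Wiener chaos orthogonality, both of which exploit that $\omega_\mu$ lives in the first Gaussian chaos and should capture the correct factor $1/(2d)^n$. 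A secondary obstacle is the identification of the class of ``sufficiently smooth'' $F$: because the series grows like $\sum F^{(k)}(\bar{\varphi})\,A^k/k!$, uniqueness in the strongest sense requires $F$ to range over a determining subclass of $C_c^\infty$ or $\mathcal{S}(\mathbb{R})$, and one then recovers the law of the total mass via the Riesz or Stone--Weierstrass theorem applied to this class.
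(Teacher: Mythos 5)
This statement is labeled a conjecture and the paper offers no proof of it; there is nothing to compare your argument against, and what you have written is a plan with the decisive steps left open rather than a proof. More importantly, the central estimate you propose to supply, $|H_{n,k}(\varphi)|\le C(\varphi)^n\,n!\,A^k/k!$, is not merely unproven but false in the one family of cases where the coefficients can be computed explicitly. Since $H_{n,k}(\varphi)=0$ for $k>2n$ by Theorem~\ref{IntRenormal}, such a bound would make the $k$-sum in Eq.~\eqref{fnq} finite and give $|f_n(q)|\le \mathrm{const}(q)\,C^n\,n!$ for non-integer $q>0$, hence convergence of the Mellin-transform expansion of Theorem~\ref{mellin} on a disc $|\mu|<1/C$. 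But the paper states explicitly (Remark after Theorem~\ref{mellin}) that this expansion is only asymptotic outside finite ranges of integer moments, and the explicit coefficients of Lemma~\ref{logmomentscircle} confirm it: $c_p(q)$ contains $B_{p+1}(q)/(p\,2^p(p+1))$, and Bernoulli polynomials at fixed non-integer $q$ grow factorially in $p$, so $\sum_p c_p(q)\mu^p$ diverges for every $\mu\neq 0$. Via the Bell-polynomial relation between $f_n(q)$ and $c_p(q)$, factorial growth of $c_p(q)$ forces $f_n(q)$ to grow faster than $C^n n!$, contradicting your bound. The actual content of the conjecture is that for a \emph{restricted} class of test functions $F$ (which must exclude $F(x)=x^q$) the decay of $F^{(k)}(\bar{\varphi})$, or cancellation across $k$, compensates the true super-factorial growth of the coefficients; a uniform product bound on $H_{n,k}$ cannot capture this mechanism.

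Two secondary points. First, your appeal to hypercontractivity or Wiener chaos orthogonality to recover the sharp factor $1/(2d)^n$ is a reasonable instinct, but it does not address the obstruction above, which persists for arbitrarily small $\mu$. Second, in the identification step, $v_\varepsilon(\mu,F)$ is not obviously analytic in $\mu$ at $0$ (the field enters through $\sqrt{\mu}\,X$), and the claimed uniformity in $n,k$ of the convergence $H^{(\varepsilon)}_{n,k}\to H_{n,k}$ is exactly the kind of statement that would need the failed coefficient bound as input. If you want a tractable partial result, the place to start is the one the paper points to: convergence of the expansion for $F(x)=x^m$ with $m$ in the finite range where the moments exist, where the $k$-sum truncates at $k=m$ and the series is the genuinely convergent Taylor series of $S_m[\varphi](\mu)$.
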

It is known that the intermittency expansion of the Mellin transform is convergent for finite ranges of positive and negative integer moments of the Bacry-Muzy
measure on the interval, cf. Propositions 4.1 and 4.2 in \cite{Me4}.
\begin{conjecture}[Infinite Divisibility]
The distribution of 
\begin{equation}
\log \int_{\mathcal{D}} \varphi(x)
\,M_\mu(dx)
\end{equation}
is infinitely divisible. 
\end{conjecture}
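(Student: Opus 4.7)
The starting point is Theorem~\ref{mellin}, which gives (as a formal series in $\mu$)
\[
\mathbf{E}[e^{qY_\mu}] = \exp\Bigl(q\log\bar\varphi + \sum_{r=1}^\infty \mu^r c_r(q)\Bigr) =: \exp(\Psi_\mu(q)),
\]
where $Y_\mu := \log\int_\mathcal{D}\varphi(x)\,M_\mu(dx)$. Infinite divisibility of $Y_\mu$ is equivalent to $\Psi_\mu$ being a L\'evy-Khintchine exponent, i.e.\ admitting a representation with positive L\'evy measure on $\mathbb{R}$. The plan is to promote $\Psi_\mu$ from the formal generating function of cumulant-type coefficients to a bona fide Laplace exponent.

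First I would verify the conjecture in the cases where the pieces are already on the table: the Bacry-Muzy GMC on the circle in Theorem~\ref{MorrisIntegralProb} and its Selberg analogue on the interval from \cite{MeIMRN}. In both, the total mass is represented as a product of independent inverse Barnes beta random variables whose logarithms are separately infinitely divisible, so $Y_\mu$ is a sum of i.d.\ summands and the conclusion is immediate. Substituting the explicit Bernoulli/Hurwitz form of $c_r(q)$ from Lemma~\ref{logmomentscircle} into $\Psi_\mu$, I would then resum against the Malmsten-type integral kernel $e^{-ax}/(1-e^{-x\tau})$ underlying the Barnes double gamma function and thereby read off an explicit L\'evy measure directly from the coefficients $c_r$.

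For arbitrary admissible $g(x,y)$ my strategy is to exploit Kahane's $\sigma$-positive decomposition, writing $g = \sum_{k\geq 1} g_k$ as a sum of positive-definite kernels and correspondingly $\omega_\mu = \sum_k \omega_\mu^{(k)}$ into independent Gaussian fields. Conditioning on the first $K$ scales and using the Girsanov identity of Lemma~\ref{girsan} to absorb the deterministic drifts factorises $\int_\mathcal{D}\varphi\,M_\mu(dx)$ as a product of a random factor carried by scales $\leq K$ and an independent GMC built from the tail kernel $\sum_{k>K} g_k$. Combined with the scale-similarity (``star-equation'') of the cone construction of Bacry-Muzy and Chainais, this would express $Y_\mu$ as a limit of sums of independent summands of uniformly vanishing individual size, which is precisely the Khintchine characterization of infinite divisibility.

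The main obstacle is analytic. Theorem~\ref{mellin} supplies $\Psi_\mu$ only as a formal/asymptotic identity, so even the Morris route must first upgrade it to an honest equality of analytic functions in a vertical strip of the $q$-plane before L\'evy-Khintchine can be invoked; this is precisely the uniqueness/convergence issue flagged in the remark following Theorem~\ref{mellin}. The cascade route avoids this but faces the opposite difficulty, namely controlling the joint $\varepsilon\to 0$ and $K\to\infty$ limits and verifying that the scale factors become genuinely independent rather than only conditionally so. In either approach the deepest point is positivity of the emerging L\'evy measure: for arbitrary $g$ and $\varphi$ this is essentially equivalent to the conjecture itself and will plausibly require a Bondesson-style argument showing that $q\mapsto\mathbf{E}[(\int\varphi\,M_\mu(dx))^q]$ is a generalized gamma convolution.
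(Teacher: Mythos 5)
This statement is one of the paper's open conjectures: the paper offers no proof of it, only the remark that infinite divisibility of the log-mass is \emph{known} for the Selberg and Morris integral probability distributions (via their explicit Barnes beta factorizations, cf.\ \cite{MeIMRN}, \cite{Me16}, and originally \cite{Me4} for $\lambda_1=\lambda_2=0$) --- and even there the identification of those distributions with the GMC mass is itself only conjectural (Conjectures \ref{GMCCIR} and \ref{GMCINT}). So there is no paper proof to measure you against, and your proposal, read honestly, is a research programme rather than a proof; you concede as much when you write that positivity of the emerging L\'evy measure ``is essentially equivalent to the conjecture itself.'' Your treatment of the special cases is sound and matches what the paper cites: once the mass is a product of independent inverse Barnes betas whose logarithms are separately infinitely divisible, the conclusion is immediate, and the Malmsten-type integral for $\log\Gamma_2$ does produce an explicit L\'evy measure. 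But that route presupposes Conjectures \ref{GMCCIR}/\ref{GMCINT}, and the upgrade of Theorem \ref{mellin} from a formal series to an analytic identity in a strip is exactly the unresolved uniqueness issue the paper flags.

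The concrete step that would fail is the cascade route. Decomposing $g=\sum_k g_k$ into positive-definite pieces and correspondingly $\omega_\mu=\sum_k\omega_\mu^{(k)}$ does \emph{not} factorize the total mass: conditioning on the first $K$ scales leaves $\int_{\mathcal{D}}\varphi(x)\,e^{\omega_\mu^{\leq K}(x)}\,M_\mu^{>K}(dx)$, in which the coarse field enters pointwise inside the integral as a random density. This is a GMC with random weight, not a product of a coarse-scale factor and an independent fine-scale mass, so $\log$ of the total does not split into a sum of independent summands, and the Khintchine triangular-array criterion cannot be invoked. The star-equation of Bacry--Muzy gives self-similarity of the law of the mass under dilation, which is a different structure from an additive decomposition of its logarithm. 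Girsanov (Lemma \ref{girsan}) absorbs deterministic drifts, not the random coarse field. This obstruction --- that the strong spatial dependence prevents any naive multiplicative splitting --- is precisely why the paper resorts to intermittency expansions in the first place, and why the conjecture remains open for general $g$ and $\varphi$.
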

This is known for the Selberg and Morris integral probability distributions, cf. \cite{MeIMRN} and \cite{Me16} and was first discovered
in the special case of $\lambda_1=\lambda_2=0$ in \cite{Me4}.

\begin{conjecture}[GMC on the Circle]\label{GMCCIR}
Let $M_\mu(ds)$ denote the GMC measure on the circle described in Section 5
and $M_{(\tau, \lambda, \lambda)}$ denote the special case of the Morris integral
probability distribution as in Theorem \ref{MorrisIntegralProb} with $\tau=2/\mu.$ Then, 
\begin{equation}
\int_0^1 |1-e^{2\pi i s}|^{2\lambda} \,M_\mu(ds) = M_{(\tau, \lambda, \lambda)}.
\end{equation}
\end{conjecture}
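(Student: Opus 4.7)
The plan is to establish the conjecture by reducing it to a uniqueness statement for the intermittency expansion. Specifically, I would show that the total mass $\int_0^1 |1-e^{2\pi i s}|^{2\lambda}\,M_\mu(ds)$ and the Morris distribution $M_{(\tau,\lambda,\lambda)}$ agree on (i) all positive integer moments that exist, namely for $n<\tau$, and (ii) the complete intermittency expansion of their Mellin transforms; and then to invoke a uniqueness principle of the kind formulated in Conjecture \ref{C1} to conclude equality in law.

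The moment identification is essentially immediate. For $n<\tau$, Eq. \eqref{momlambda} expresses the $n$th moment of the total mass as exactly the product appearing on the right-hand side of Morris' integral with $\lambda_1=\lambda_2=\lambda$, and Theorem \ref{MorrisIntegralProb} asserts that $\mathfrak{M}(n\,|\,\tau,\lambda,\lambda)$ reproduces this same product in the regime $n<\tau$. Thus the first $\lfloor\tau\rfloor$ moments automatically coincide; no new computation is required beyond citing these identifications.

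Next I would match the full intermittency expansion of the Mellin transforms. By Theorem \ref{mellin} and the polynomial-in-$l$ property of Lemma \ref{polyndepend}, the Mellin transform of the total mass has the formal expansion $\bar{\varphi}^q\exp\bigl(\sum_{p\geq 1}\mu^p c_p(q)\bigr)$, where $c_p(q)$ is obtained from Lemma \ref{logmomentscircle} by the substitution $l\mapsto q$. On the other side, the Asymptotic Expansion theorem of Section 5 supplies the $1/\tau$-expansion of $\log\mathfrak{M}(q\,|\,\tau,\lambda,\lambda)$. Under the change of variable $\mu=2/\tau$ the prefactor $\mu^p/2^p$ in $c_p(q)\mu^p$ collapses to $1/\tau^p$, and the bracketed expression in Lemma \ref{logmomentscircle} with $\lambda_1=\lambda_2=\lambda$ coincides term-for-term with the bracket appearing in the Asymptotic Expansion theorem. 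Consequently, the intermittency expansion produced by Theorem \ref{mellin} and the $1/\tau$-expansion of $\log\mathfrak{M}$ agree as formal power series.

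The main obstacle lies in the final step: upgrading this formal agreement to an honest equality in distribution. Because $S_n[\varphi](\mu)$ is finite only for $n<\tau$, the classical moment problem is indeterminate, and moment matching alone cannot conclude. One route is to show that the intermittency expansion of the Mellin transform, though divergent in $\mu$, is Borel summable to, or uniquely characterizes, the meromorphic function $\mathfrak{M}(q\,|\,\tau,\lambda,\lambda)$ via the integral representation $I(q\,|\,a\tau,\tau)$ exploited in the proof of the Asymptotic Expansion theorem. An alternative route is to establish infinite divisibility of $\log\int_0^1 |1-e^{2\pi i s}|^{2\lambda}\,M_\mu(ds)$ independently; the common intermittency expansion then determines the L\'evy--Khintchine data uniquely, and equality with the known infinitely divisible distribution $\log M_{(\tau,\lambda,\lambda)}$ follows. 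In either case the genuinely hard part is a uniqueness theorem converting term-by-term asymptotic equality of Mellin transforms into an equality of meromorphic functions, which is precisely the content of Conjecture \ref{C1} specialized to the circular setting.
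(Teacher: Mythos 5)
Your proposal mirrors the paper's own treatment of this statement exactly: the paper supports the conjecture by (i) noting that the moments of the total mass (Eq. \eqref{momlambda}) coincide with the Morris product reproduced by $\mathfrak{M}(n\,|\,\tau,\lambda,\lambda)$ for $n<\tau$, and (ii) checking that the asymptotic expansion of $\log\mathfrak{M}$ in $1/\tau$ matches term-for-term the intermittency expansion furnished by Theorem \ref{mellin} and Lemma \ref{logmomentscircle}, and it likewise leaves the final uniqueness step open --- which is precisely why the statement is labeled a conjecture rather than a theorem. You correctly identify that the missing ingredient is the uniqueness principle of Conjecture \ref{C1}, so your ``proof'' is as complete as the paper's, namely a reduction to that open conjecture rather than a proof.
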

The reason for the restriction $\lambda_1=\lambda_2$ is that $M_{(\tau,\lambda_1,\lambda_2)}$ is real-valued whereas  
the generalized total mass corresponding to the full Morris integral  
is not in general, unless
$\lambda_1=\lambda_2.$ This conjecture first appeared in \cite{FyoBou} for $\lambda_1=\lambda_2=0$ and in \cite{Me16} in general. 

\begin{conjecture}[GMC on the Interval]\label{GMCINT}
Let $M_\mu(ds)$ denote the GMC measure on the interval with $r(t)$ as in Eq. \eqref{int} and $\tau=2/\mu.$  
Then, the distribution of 
\begin{equation}
\int_0^1 s^{\lambda_1}(1-s)^{\lambda_2} \,M_\mu(ds) 
\end{equation}
has the Mellin transform
\begin{gather}
\Bigl(\frac{2\pi\,\tau^{\frac{1}{\tau}}}{\Gamma\bigl(1-1/\tau\bigr)}\Bigr)^q\;
\frac{\Gamma_2(1-q+\tau(1+\lambda_1)\,|\,\tau)}{\Gamma_2(1+\tau(1+\lambda_1)\,|\,\tau)}\times
\nonumber \\ \times
\frac{\Gamma_2(1-q+\tau(1+\lambda_2)\,|\,\tau)}{\Gamma_2(1+\tau(1+\lambda_2)\,|\,\tau)}
\frac{\Gamma_2(-q+\tau\,|\,\tau)}{\Gamma_2(\tau\,|\,\tau)}
\frac{\Gamma_2(2-q+\tau(2+\lambda_1+\lambda_2)\,|\,\tau)}{\Gamma_2(2-2q+\tau(2+\lambda_1+\lambda_2)\,|\,\tau)},\label{SelbIntProbDidMellin}
\end{gather}
\emph{i.e.} is the Selberg integral probability distribution.
\end{conjecture}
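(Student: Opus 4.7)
The plan is to mirror the argument used for the Morris integral case in Section 5 as closely as possible. First I would apply Theorem \ref{mellin} with $\varphi(s)=s^{\lambda_1}(1-s)^{\lambda_2}$ and $g(t)=-\log|t|$, for which the positive integer moments in Eq. \eqref{momentsint1d} are given by the classical Selberg integral as an explicit finite product of Gamma factors $\Gamma(1+\lambda_j - k/\tau)$, $\Gamma(1-(k+1)/\tau)$, and $\Gamma(2+\lambda_1+\lambda_2-(n+k-1)/\tau)^{-1}$ over $k=0,\dots,n-1$. Taking the logarithm and expanding each factor via the standard identity $\log\Gamma(a+z)=\log\Gamma(a)+\sum_{p\geq 1}(-z)^p\zeta(p,a)/p$, then collapsing the resulting sums in $k$ by the Bernoulli identity $\sum_{k=x}^{y} k^p = (B_{p+1}(y+1)-B_{p+1}(x))/(p+1)$, would produce a closed-form expression for the coefficients $c_p(l)$ as polynomials in $l$ built from Hurwitz zeta values and Bernoulli polynomials, exactly analogous to Lemma \ref{logmomentscircle}. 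This gives, via Theorem \ref{mellin}, the explicit intermittency expansion of the Mellin transform of the total mass in the Selberg case.

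Second, I would take the candidate function on the right-hand side of Eq. \eqref{SelbIntProbDidMellin} and verify two properties in parallel with Theorem \ref{MorrisIntegralProb}. Property (i): at $q=n$ with $n<\tau$, it reduces to the Selberg integral; this is a standard consequence of the functional equation $\Gamma_2(z+\tau\,|\,\tau)/\Gamma_2(z\,|\,\tau)=(2\pi)^{1/2}\tau^{z/\tau-1/2}/\Gamma(z/\tau)$ applied recursively to each of the four double-gamma ratios, collapsing them to products of ordinary Gamma functions that telescope onto the Selberg product. Property (ii): its asymptotic expansion as $\tau\to\infty$ coincides with the expansion produced in the first paragraph. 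For this I would re-use the integral representation lemma from the proof of the asymptotic expansion of $\log\mathfrak{M}$ (expressing $\log G(\cdot\,|\,\tau)$ ratios via the integral $I(q\,|\,a,\tau)$), apply it to each of the four $G$-function ratios implicit in Eq. \eqref{SelbIntProbDidMellin}, and match the resulting $\zeta(p,\cdot)$ and Bernoulli-polynomial contributions term by term with the $c_p(q)$ obtained from the Selberg moments.

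As a byproduct, the same analysis would realize the candidate as a finite product of independent inverse Barnes beta factors along the lines of Eq. \eqref{thedecompcircle}, confirming in particular that the log of the conjectured distribution is infinitely divisible. The main obstacle is exactly the one flagged by the author: these steps establish that the Selberg integral probability distribution and the true law of $\int_0^1 s^{\lambda_1}(1-s)^{\lambda_2}\,M_\mu(ds)$ share all positive integer moments (up to $n<\tau$) and have identical asymptotic intermittency expansions of their Mellin transforms, but since the moment problem here is indeterminate, this does not by itself upgrade the conjecture to a theorem. A rigorous proof therefore requires Conjecture \ref{C1} (or the weaker Mellin-transform uniqueness statement in the remark following Theorem \ref{mellin}) as an input; modulo that uniqueness input, the computation above converts Conjecture \ref{GMCINT} into a theorem by exactly the mechanism used for the Morris case.
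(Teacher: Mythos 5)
The statement you were asked about is presented in the paper as a conjecture with no proof given; the supporting evidence the paper relies on (that the candidate Mellin transform reproduces the Selberg integral at positive integer $q<\tau$ and that its asymptotic expansion in $\tau\to\infty$ matches the intermittency expansion of Theorem \ref{mellin}, exactly as carried out for the circle in Section 5 and for the interval in the cited references) is precisely what your plan reproduces. You also correctly identify that the only missing ingredient separating this evidence from a proof is a uniqueness statement such as Conjecture \ref{C1}, so your proposal takes essentially the same approach as the paper and accurately reflects its status.
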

We refer the reader to \cite{MeIMRN} and \cite{Me14} for the original construction of the Selberg integral probability distribution and to \cite{Me16} for review.
This conjecture first appeared in \cite{Me4} in the special case of $\lambda_1=\lambda_2=0$ and in \cite{FLDR} and \cite{MeIMRN} in general.

\begin{conjecture}[Self-duality of the Mellin Transform]
Let $\tau=2/\mu.$ The Mellin transform,
\begin{equation}
\mathfrak{M}(q\,|\,\tau) \triangleq {\bf E}\Bigl[\bigl(\int_0^1 M_\mu(ds)\bigr)^q\Bigr],
\end{equation}
is self-dual (involution invariant) under the transformation
\begin{gather}
\tau\rightarrow \frac{1}{\tau},\; q\rightarrow \frac{q}{\tau},   \\ 
\mathfrak{M}\bigl(\frac{q}{\tau}\,\Big|\,\frac{1}{\tau}\bigr) (2\pi)^{-\frac{q}{\tau}}\,\Gamma^{\frac{q}{\tau}}(1-\tau) \Gamma(1-\frac{q}{\tau}) = 
\mathfrak{M}(q\,|\,\tau) (2\pi)^{-q} 
\Gamma^{q}(1-\frac{1}{\tau}) \Gamma(1-q).\label{involutionint}
\end{gather}
\end{conjecture}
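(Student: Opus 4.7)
The plan is to deduce the self-duality from the explicit form of the Mellin transform provided by Conjecture \ref{GMCINT}, specialized to $\lambda_1=\lambda_2=0$. That formula represents $\mathfrak{M}(q|\tau)$ as the prefactor $(2\pi\,\tau^{1/\tau}/\Gamma(1-1/\tau))^q$ times a product of four ratios of the Barnes double gamma function $\Gamma_2(z|\tau)\equiv\Gamma_2(z|1,\tau)$. Under the involution $\tau\mapsto 1/\tau$, $q\mapsto q/\tau$ the shape of the formula is preserved but all $\Gamma_2(\cdot|1,\tau)$ become $\Gamma_2(\cdot|1,1/\tau)$, so the proof reduces to bridging $\Gamma_2(z/\tau|1/\tau)$ and $\Gamma_2(z|\tau)$ by an explicit correction, substituting into the four ratios, and comparing with the ratio of the two prefactors.

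The bridge comes from two properties of $\Gamma_2$: its symmetry $\Gamma_2(z|\omega_1,\omega_2)=\Gamma_2(z|\omega_2,\omega_1)$ and its homogeneity $\log\Gamma_2(\lambda z|\lambda\omega_1,\lambda\omega_2)=\log\Gamma_2(z|\omega_1,\omega_2)+P_\lambda(z;\omega_1,\omega_2)$, where $P_\lambda$ is a polynomial of degree at most two in $z$ whose coefficients are explicit combinations of $\log\lambda$, the digamma function, and Hurwitz zeta values. Both properties are transparent from the Shintani integral representation of $\log\Gamma_2$, the scaling being a change of variable in the integrand. Writing $\Gamma_2(z/\tau|1,1/\tau)=\Gamma_2(z/\tau|1/\tau,1)$ and then rescaling with $\lambda=\tau$ yields
\begin{equation*}
\Gamma_2(z/\tau|1/\tau)=\exp\bigl(-P_\tau(z;1/\tau,1)\bigr)\,\Gamma_2(z|\tau),
\end{equation*}
the required bridge identity.

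Substituting this into each $\Gamma_2$ factor in $\mathfrak{M}(q/\tau|1/\tau)$ yields a total polynomial-in-$q$ correction in the exponent, while the ratio of the two prefactors is immediately $(2\pi)^{q/\tau-q}\,\tau^{-q-q/\tau}\,\Gamma^q(1-1/\tau)/\Gamma^{q/\tau}(1-\tau)$. Matching with Eq. \eqref{involutionint} reduces to showing that the accumulated $\Gamma_2$ correction contributes exactly $\tau^{q+q/\tau}\,\Gamma(1-q)/\Gamma(1-q/\tau)$. The quadratic-in-$q$ piece cancels by a weighted sum over the five $q$-dependent arguments $1-q+\tau$ (multiplicity two, numerator), $-q+\tau$ (numerator), $2-q+2\tau$ (numerator), and $2-2q+2\tau$ (denominator): with $q$-coefficients $-1,-1,-1,-1,-2$ and ratio signs $+,+,+,+,-$, the weighted sum $1+1+1+1-4=0$ annihilates the $[P_\tau]_{z^2}$ contribution. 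The surviving linear-in-$q$ and constant-in-$q$ remainders must then recombine, via the reflection formula $\Gamma(z)\Gamma(1-z)=\pi/\sin\pi z$ and the difference equation of $\Gamma_2$, into the required $\Gamma(1-q)/\Gamma(1-q/\tau)$ and the power $\tau^{q+q/\tau}$. The main obstacle is this final recombination: the surviving terms mix $\log\tau$, $\psi(1+\tau)$, and Hurwitz zeta values, and their collapse to a pure gamma ratio is tight and forced by the specific arrangement of arguments in Conjecture \ref{GMCINT}. A cleaner but less elementary alternative is to recognize the four $\Gamma_2$ factors together with the prefactor as a Liouville-type $\Upsilon_\tau$-combination whose $\tau\leftrightarrow 1/\tau$ invariance is built into its Ponsot--Teschner normalization, which would bypass the explicit bookkeeping.
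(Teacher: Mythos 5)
The first thing to note is that the statement you are proving is presented in the paper as a \emph{conjecture} about the actual Mellin transform ${\bf E}\bigl[(\int_0^1 M_\mu(ds))^q\bigr]$ of the GMC measure, and the paper offers no proof of it; it only remarks that the involution identity is known to hold for the explicit Selberg and Morris integral probability distributions. Your argument derives Eq.~\eqref{involutionint} from the explicit formula \eqref{SelbIntProbDidMellin} of Conjecture~\ref{GMCINT}, which is itself an unproven conjecture. So what you have is a reduction of one conjecture to another, not a proof: the conditional step (``if the total mass has the Selberg integral probability distribution, then its Mellin transform is self-dual'') is exactly the content the paper attributes to \cite{FLDR}, \cite{Me14}, \cite{Me16}, while the unconditional statement remains open precisely because the identification in Conjecture~\ref{GMCINT} is open. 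You should state this dependence explicitly rather than present the computation as a proof of the conjecture.

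Within the conditional framework your verification strategy is essentially the standard one and is sound in outline: the period symmetry and homogeneity of $\Gamma_2$, the reduction of the prefactors to $\tau^{-q}$ versus $\tau^{q/\tau}$, and the cancellation of the quadratic-in-$q$ correction via the weighted sum $1+1+1+1-4=0$ all check out. However, the final step is both incomplete and slightly misattributed. After applying symmetry and homogeneity with $\lambda=\tau$, three of the four ratios land back on their original arguments exactly ($1-q+\tau$, $1-q+\tau$, and $2-q+2\tau$ over $2-2q+2\tau$ are fixed points of the rescaled involution); the entire transcendental factor $\Gamma(1-q)/\Gamma(1-q/\tau)$ arises from the single mismatched ratio, where the dual produces $\Gamma_2(1-q\,|\,\tau)/\Gamma_2(1\,|\,\tau)$ in place of $\Gamma_2(\tau-q\,|\,\tau)/\Gamma_2(\tau\,|\,\tau)$, and the two are bridged by the quotient of the two difference equations, $\Gamma_2(z+1\,|\,\tau)/\Gamma_2(z+\tau\,|\,\tau)=\tau^{1/2-z/\tau}\,\Gamma(z)/\Gamma(z/\tau)$ at $z=-q$ and $z=0$. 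It cannot come from the ``surviving linear-in-$q$ and constant-in-$q$ remainders'' of the homogeneity polynomial, since an exponential of a polynomial in $q$ has no poles, whereas $\Gamma(1-q)/\Gamma(1-q/\tau)$ does; the polynomial remainders can only supply the power $\tau^{q}$, and the reflection formula is not needed. Organizing the computation around this one mismatched ratio would let you actually close the bookkeeping you flag as the main obstacle.
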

This is known for both Morris\footnote{This holds for $\mathfrak{M}(q\,|\tau,\,\lambda_1,\,\lambda_2)$ in Eq. \eqref{thefunction}
provided it is multiplied by $(2\pi)^q$ and the $\lambda$s transform $\lambda_i=\tau\lambda_i,$ cf. \cite{Me16} for details.}
 and Selberg integral probability distributions and was first discovered in \cite{FLDR}, 
cf. also \cite{FLD}, \cite{Me14}, \cite{Me16} for extensions to nonzero $\lambda_1,\lambda_2$ and 
\cite{CRS} for self-duality in the model of the 2D Gaussian Free Field (GFF) restricted to circles, cf. Eq. \eqref{GFFoncircle} below. 


\begin{conjecture}[Addition of $O(\varepsilon)$ terms in Covariance]
If a gaussian process $\widetilde{\omega}_{\mu,\varepsilon}(s)$ has covariance that satisfies Eq. \eqref{covr} up to $O(\varepsilon)$ in the limit
$\varepsilon\rightarrow 0$ and has the mean that satisfies Eq. \eqref{mean}, then its exponential functional gives rise to the same GMC measure as 
in Section 2. 
\end{conjecture}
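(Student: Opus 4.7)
The plan is to establish the conjecture via the Gaussian interpolation argument of Kahane, which is the same mechanism underlying the universality of the construction with respect to the bump function $\theta$ in the Robert--Vargas theory \cite{RV}. Let $X_{\mu,\varepsilon}$ and $\widetilde{X}_{\mu,\varepsilon}$ denote two independent centered Gaussian processes defined on a common probability space with covariances $K$ and $\widetilde{K}$ respectively, both satisfying Eq.~\eqref{covr} up to an $O(\varepsilon)$ error $R(s,u)=K(s,u)-\widetilde{K}(s,u)$. Interpolate by $Z_\alpha(s)=\sqrt{\alpha}\,X_{\mu,\varepsilon}(s)+\sqrt{1-\alpha}\,\widetilde{X}_{\mu,\varepsilon}(s)$ for $\alpha\in[0,1]$, which is centered Gaussian with covariance $K_\alpha=\alpha K+(1-\alpha)\widetilde{K}$, and form the normalized field $\omega^\alpha_{\mu,\varepsilon}(s)=\sqrt{\mu}\,Z_\alpha(s)-\frac{\mu}{2}\mathbf{Var}\,Z_\alpha(s)$ together with its regularized total mass $M_\alpha^{(\varepsilon)}=\int_\mathcal{D}\varphi(s)\,e^{\omega^\alpha_{\mu,\varepsilon}(s)}\,ds$.

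A direct application of Gaussian integration by parts, together with the cancellation between the cross term and the variance correction along the diagonal, yields the Kahane derivative identity
\begin{equation}
\frac{d}{d\alpha}\,\mathbf{E}\bigl[F\bigl(M_\alpha^{(\varepsilon)}\bigr)\bigr]=\frac{\mu}{2}\iint_{\mathcal{D}^2}R(s,u)\,\varphi(s)\varphi(u)\,\mathbf{E}\Bigl[F''\bigl(M_\alpha^{(\varepsilon)}\bigr)\,e^{\omega^\alpha_{\mu,\varepsilon}(s)+\omega^\alpha_{\mu,\varepsilon}(u)}\Bigr]\,ds\,du.
\end{equation}
Integrating in $\alpha$ from $0$ to $1$ and recognizing the remaining expectation as an $F''$-weighted second moment of $M_\alpha^{(\varepsilon)}$ at the interpolated covariance, one obtains the bound
\begin{equation}
\bigl|\mathbf{E}[F(M_1^{(\varepsilon)})]-\mathbf{E}[F(M_0^{(\varepsilon)})]\bigr|\le\frac{\mu}{2}\,\|F''\|_\infty\,\sup_{s,u}|R(s,u)|\,\sup_{\alpha\in[0,1]}\iint_{\mathcal{D}^2}\varphi(s)\varphi(u)\,e^{\mu K_\alpha(s,u)}\,ds\,du.
\end{equation}

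Because the interpolated covariance $K_\alpha$ retains the same leading logarithmic singularity $-\mu\log r(s-u)$ as both $K$ and $\widetilde{K}$ for $|s-u|\ge\varepsilon$, the sub-critical hypothesis $\mu<2$ guarantees that the remaining double integral is bounded uniformly in $\alpha$ and $\varepsilon$ by the standard Selberg/Morris type estimate. Letting $\varepsilon\to0$ then forces the right-hand side to vanish, so that the limit laws of $\int_\mathcal{D}\varphi(s)\,M_\mu(ds)$ and $\int_\mathcal{D}\varphi(s)\,\widetilde{M}_\mu(ds)$ coincide for every admissible test $\varphi$ and every smooth $F$ with bounded second derivative. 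Since both $M_1^{(\varepsilon)}$ and $M_0^{(\varepsilon)}$ converge in law by the Kahane--Robert--Vargas theory, the limits must agree, and the extension to equality of the full random measures follows by applying the same interpolation to the multi-subset functional $v(\mu,\vec{f},\vec{F},\vec{\mathcal{D}})$ of Theorem~5. The principal obstacle is the uniform moment control: while the leading singularity is preserved under the perturbation, one must verify that the Kahane--Robert--Vargas second-moment bound is robust against covariance perturbations that remain of order $\varepsilon$ but may lack the usual monotonicity structure; this in turn reduces to checking that the exponent in Eq.~\eqref{regioncontrib} is unaffected by the $O(\varepsilon)$ correction, which is manifest in the strict sub-critical range.
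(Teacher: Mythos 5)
This statement is listed in Section~6 as an open conjecture; the paper offers no proof of it (it only remarks that the conjecture is motivated by \cite{Menon}), so your attempt cannot be compared against an argument of the author's. Judged on its own terms, your choice of Kahane's Gaussian interpolation is the natural and standard device for exactly this kind of universality statement, and the derivative identity you write down (with the diagonal cancellation coming from the variance normalization in Eq.~\eqref{omegaX}) is correct.

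The genuine gap is in the quantitative step. Your bound controls the $\alpha$-derivative by $\|F''\|_\infty\,\sup|R|\,\iint_{\mathcal{D}^2}\varphi(s)\varphi(u)e^{\mu K_\alpha(s,u)}\,ds\,du$, and that double integral is precisely the second moment $S_2[\varphi](\mu)$ of the regularized mass. By Eq.~\eqref{nc} (equivalently, by setting $n=2$ in Eq.~\eqref{regioncontrib}, which gives $O(\varepsilon^{d-\mu})$), this is bounded uniformly in $\varepsilon$ only when $\mu<d$, \emph{not} throughout the sub-critical range $\mu<2d$ that the conjecture concerns. On the interval or circle ($d=1$) your argument therefore covers $\mu<1$ but says nothing for $1\le\mu<2$; your closing sentence conflates the non-degeneracy threshold $\mu_c=2d$ with the $L^2$ threshold $\mu=d$. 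This is not a technicality: the passage from the $L^2$ phase to the full sub-critical phase is exactly where the known universality proofs (Robert--Vargas \cite{RV}, Berestycki \cite{B}, Shamov \cite{Shamov}, Junnila--Saksman \cite{JS}) must abandon naive second-moment bounds in favor of truncations on sets of ``good points,'' Cameron--Martin/Girsanov rooted-measure arguments, or field decompositions, and none of that machinery appears in your proposal. A secondary soft spot: you invoke ``Kahane--Robert--Vargas theory'' to assert that $M_0^{(\varepsilon)}$ (the perturbed field's mass) converges in law, but the perturbed covariance need not have the convolution or $\sigma$-positive structure those theories assume, so convergence of the perturbed functional is itself part of what must be established rather than something that can be quoted.
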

This is motivated by mesoscopic statistics of Riemann zeroes that we considered in \cite{Menon}. 

\begin{conjecture}[Centered GMC Measure]
Considered the centered version of the underlying gaussian field,
\begin{equation}\label{centeredomega}
\widetilde{\omega}_{\mu, \varepsilon}(u) \triangleq \omega_{\mu, \varepsilon}(u)-\omega_{\mu, \varepsilon}(0).
\end{equation}
Let $-1/\mu-1/2<\Re(q)<2/\mu,$ then 
\begin{equation}
\lim\limits_{\varepsilon\rightarrow 0} e^{\mu(\log r(\varepsilon)-1)\frac{q(q+1)}{2}} 
{\bf E} \Bigl[\Bigl(\int_0^1 \varphi(u)\,e^{\widetilde{\omega}_{\mu, \varepsilon}(u)} du\Bigr)^q\Bigr] =  {\bf E} \Bigl[ \Bigl(\int_0^1 r(u)^{\mu q}\,\varphi(u)\, M_\mu(du)\Bigr)^q\Bigr]. \label{gen1}
\end{equation}
\end{conjecture}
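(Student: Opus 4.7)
The identity is essentially a single-point Cameron--Martin/Girsanov shift applied to the underlying centered gaussian field, combined with a variance counter-term that exactly cancels the logarithmic divergence $\mu\log r(\varepsilon)$ built into the prescribed normalization. Writing $\omega_{\mu,\varepsilon}(u)=\sqrt{\mu}X(u)-\tfrac{\mu}{2}G$ with $G\triangleq{\bf Var}X(u)$ independent of $u$ by stationarity, the mean corrections cancel in the difference so that $\widetilde{\omega}_{\mu,\varepsilon}(u)=\sqrt{\mu}\bigl(X(u)-X(0)\bigr)$. Pulling the boundary value out of the integral yields
$$\Bigl(\int_0^1\varphi(u)\,e^{\widetilde{\omega}_{\mu,\varepsilon}(u)}\,du\Bigr)^q=e^{-q\sqrt{\mu}\,X(0)}\Bigl(\int_0^1\varphi(u)\,e^{\sqrt{\mu}\,X(u)}\,du\Bigr)^q,$$
so the left-hand side expectation is of the form ${\bf E}[e^{-q\sqrt{\mu}X(0)}\,H(X)]$ with $H$ the $q$-th power of the integral.

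Next I would apply Lemma \ref{girsan} in the single-point form ($n=1$, $y_1=0$, $\alpha=-q\sqrt{\mu}$, extended to complex $\alpha$ by analyticity of both sides as gaussian integrals) to get
$${\bf E}\bigl[e^{-q\sqrt{\mu}X(0)}\,H(X)\bigr]=e^{q^2\mu G/2}\,{\bf E}\bigl[H\bigl(X-q\sqrt{\mu}\,g(\cdot,0)\bigr)\bigr].$$
The shift transforms the integrand into $e^{\sqrt{\mu}X(u)-q\mu g(u,0)}$. On $(\varepsilon,1]$ we have $g(u,0)=-\log r(u)$ from Eq. \eqref{covr}, so $e^{-q\mu g(u,0)}=r(u)^{q\mu}$; on $[0,\varepsilon]$ one obtains the $\varepsilon$-regularized weight $r_\varepsilon(u)^{q\mu}\triangleq e^{-q\mu g(u,0)}$, continuous at $u=\varepsilon$. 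Returning from $\sqrt{\mu}X$ to $\omega_{\mu,\varepsilon}$ via $\sqrt{\mu}X(u)=\omega_{\mu,\varepsilon}(u)+V/2$ with $V=\mu G$ extracts a further factor $e^{qV/2}$ from the $q$-th power, yielding the exact pre-limit identity
$${\bf E}\Bigl[\Bigl(\int_0^1\varphi\,e^{\widetilde{\omega}_{\mu,\varepsilon}}\,du\Bigr)^q\Bigr]=e^{q(q+1)V/2}\,{\bf E}\Bigl[\Bigl(\int_0^1\varphi(u)\,r_\varepsilon(u)^{q\mu}\,e^{\omega_{\mu,\varepsilon}(u)}\,du\Bigr)^q\Bigr].$$

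Substituting $V=-\mu\log r(\varepsilon)+\mu\varepsilon(\log r)'(\varepsilon)$ from Eq. \eqref{covr}, the prescribed normalization telescopes with the Girsanov prefactor:
$$e^{\mu(\log r(\varepsilon)-1)q(q+1)/2}\cdot e^{q(q+1)V/2}=\exp\Bigl(\tfrac{\mu q(q+1)}{2}\bigl(\varepsilon(\log r)'(\varepsilon)-1\bigr)\Bigr)\longrightarrow 1$$
as $\varepsilon\to 0$ by Eq. \eqref{derivbound}. What remains is then the limit of the right-hand side, which is ${\bf E}\bigl[\bigl(\int_0^1\varphi(u)r(u)^{q\mu}M_\mu(du)\bigr)^q\bigr]$, because $r_\varepsilon\to r$ pointwise on $(0,1]$ and the regularized random measure converges to $M_\mu$ against the weight $\varphi\, r^{q\mu}$.

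The hard part is justifying this last $\varepsilon\to 0$ passage on the full strip $-1/\mu-1/2<\Re(q)<2/\mu$. For real $q\in(0,2/\mu)$ it follows from standard $L^q$-convergence of regularized GMC measures against a bounded positive weight. For real $q<0$ one needs control of the negative moments of $\int_0^1\varphi\,r^{q\mu}M_\mu(du)$; the condition $q\mu>-1$ only secures Lebesgue integrability of the weight at $u=0$, while the additional margin of $1/2$ accommodates the multifractal fluctuations of $M_\mu$ near the singularity (as is already visible in the pole structure of the Mellin transforms in Theorem \ref{MorrisIntegralProb} and Conjecture \ref{GMCINT}). The extension to complex $q$ inside the strip is then by analytic continuation, since both sides are holomorphic in $q$ wherever finite and the pre-limit identity is exact in $\alpha=-q\sqrt{\mu}$. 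Finally, one must check that the regularization region $[0,\varepsilon]$ contributes negligibly, which reduces to the estimate $\varepsilon\cdot r(\varepsilon)^{\Re(q)\mu}\to 0$, satisfied strictly inside the stated strip.
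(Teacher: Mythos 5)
The paper does not prove this statement: it is presented as a conjecture, with only the remark that it is ``based on Girsanov's theorem for gaussian fields'' and a pointer to \cite{Menon}. Your computation is, in effect, a correct reconstruction of that motivating heuristic. The bookkeeping checks out: with $\omega_{\mu,\varepsilon}=\sqrt{\mu}X-\tfrac{\mu}{2}{\bf Var}X$ the mean terms cancel in \eqref{centeredomega}, the single-point shift from Lemma~\ref{girsan} with $\alpha=-q\sqrt{\mu}$ at $y=0$ produces the weight $e^{-q\mu g(u,0)}=r(u)^{q\mu}$ on $u\geq\varepsilon$ together with the prefactor $e^{q(q+1)V/2}$, $V={\bf Var}\,\omega_{\mu,\varepsilon}=\mu(-\log r(\varepsilon)+\varepsilon(\log r)'(\varepsilon))$ by \eqref{covr}, and this telescopes against the stated normalization to $\exp\bigl(\tfrac{\mu q(q+1)}{2}(\varepsilon(\log r)'(\varepsilon)-1)\bigr)\to 1$ by \eqref{derivbound}. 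This correctly explains where the constant $e^{\mu(\log r(\varepsilon)-1)q(q+1)/2}$ in \eqref{gen1} comes from, which is worth recording.

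However, the proposal is not a proof, and the gap is exactly the content of the conjecture. The final step --- passing $\varepsilon\to 0$ inside the expectation of $\bigl(\int_0^1\varphi(u)\,r_\varepsilon(u)^{q\mu}e^{\omega_{\mu,\varepsilon}(u)}du\bigr)^q$ on the full strip $-1/\mu-1/2<\Re(q)<2/\mu$ --- is asserted, not established. For $\Re(q)<0$ the weight $r(u)^{q\mu}$ is singular at $u=0$, and one must prove (i) that the limiting random variable $\int_0^1 r(u)^{\mu q}\varphi(u)M_\mu(du)$ has finite moments of the required complex orders, and (ii) that the regularized moments converge to them; your remark that the extra margin of $1/2$ ``accommodates the multifractal fluctuations'' is a heuristic reading of the expected pole structure, not an argument. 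Likewise, the appeal to analytic continuation in $q$ requires uniform-in-$\varepsilon$ local bounds on the strip (e.g.\ for a Vitali-type argument) before the pointwise limit on a real segment can be upgraded, and those bounds are again the hard estimates. Even on $\Re(q)\in(1,2/\mu)$ the ``standard $L^q$-convergence'' of regularized GMC against a weight vanishing at a point is a nontrivial input that should be cited or proved. So the proposal should be read as a derivation of the precise form of the conjecture from Girsanov, with the analytic core left open --- which is consistent with the paper's own classification of the statement as a conjecture.
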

This is motivated by conjectured mod-gaussian limit theorems and based on Girsanov's theorem
for gaussian fields, cf. \cite{Menon} for details. For example, combining this conjecture with Eq. \eqref{SelbIntProbDidMellin}, one obtains for the Bacry-Muzy GMC on the interval
\begin{gather}
\lim\limits_{\varepsilon\rightarrow 0} e^{\mu(\log \varepsilon-1)\frac{q(q+1)}{2}} 
{\bf E} \Bigl[\Bigl(\int_0^1 e^{\widetilde{\omega}_{\mu, \varepsilon}(u)} du\Bigr)^q\Bigr]  = 
\frac{ (2\pi \tau^{\frac{1}{\tau}})^{q}}{\Gamma^{q}\bigl(1-1/\tau\bigr)}
\frac{\Gamma_2(1+q+\tau\,|\,\tau)}{\Gamma_2(1+2q+\tau\,|\,\tau)} \times \nonumber \\  \times
\frac{\Gamma_2(1-q+\tau\,|\,\tau)}{\Gamma_2(1+\tau\,|\,\tau)}
\frac{\Gamma_2(-q+\tau\,|\,\tau)}{\Gamma_2(\tau\,|\,\tau)}
\frac{\Gamma_2(2+q+2\tau\,|\,\tau)}{\Gamma_2(2+2\tau\,|\,\tau)}. \label{M1}
\end{gather}

We also want to mention a few open questions. 
\begin{enumerate}
\item Are there any examples of $r(t)$ different from Eq. \eqref{int} and Eq. \eqref{cir} for which the moments in Eq. \eqref{momentsint} and the full intermittency
expansion can be computed in closed form and the intermittency expansion can be re-summed to give the Mellin transform of a valid probability distribution? This is particularly interesting for the model of the 2D Gaussian Free Field (GFF) restricted to circles
that was recently considered in \cite{CRS} and corresponds to 
\begin{equation}\label{GFFoncircle}
r(t) = \Big|\frac{1-e^{2\pi i t}}{1-qe^{2\pi i t}}\Big|
\end{equation}
for some $q\in (-1, 1).$
\item Is the distribution of the total mass always expressible in terms of Barnes beta distributions (as is the case of both Selberg and Morris integral distributions)?
\item What is the dependence structure of GMC measures? We showed that the dependence structure of the GMC measure can be recovered, 
in the sense of intermitency expansions, from the joint integer moments. 
For example, for two subintervals $I_1$ and $I_2$  and $r(t)=|t|$
one needs to calculate for all $n$ and $m$ 
\begin{equation}
{\bf E}\Bigl[\Bigl( \int_{I_1} M_\mu(ds) \Bigr)^{n}\Bigl(  \int_{I_2} M_\mu(ds)\Bigr)^{m}\Bigr] =  \int\limits_{I_1^{n}\times I_2^{m}} 
\prod\limits_{k<l}^{n+m} |x_k-x_l|^{-\mu} dx, 
\end{equation}
The calculation of such integrals presents a particular challenge, cf. \cite{Me6} and \cite{MeLMP} for details.
\item How to extend the theory to non-stationary GMC measures? The simplest non-trivial example is to replace $\omega_{\mu,\varepsilon}(s)$ with
the centered process 
$\omega_{\mu,\varepsilon}(s)-\omega_{\mu,\varepsilon}(0)$ as in Eq. \eqref{centeredomega} above. This is particularly interesting in the context of extrema of 
a regularized version of the fractional Brownian motion with zero Hurst index considered in \cite{FLD} and \cite{FKS}  and mesoscopic statistics of Riemann zeroes considered in \cite{Menon}. 
\item How to extend the theory of intermittency expansions to complex functionals of the total mass? For example, the Morris integral probability
distribution is defined and is real-valued for $\lambda_1\neq \lambda_2$ in Theorem \ref{MorrisIntegralProb}. Yet, the corresponding functional of
the total mass is complex-valued, unless $\lambda_1=\lambda_2,$ in spite of the fact the integer moments of the total mass are real-valued. 
Therefore, one needs to consider more general moments than just the integer
moments of the total mass and develop the corresponding theory of intermittency renormalization.

\end{enumerate}

\section{Conclusions}
We have presented a theory of renormalization of multi-dimensional GMC measures. The theory is based on the rule of intermittency differentiation. 
The rule is an exact 
functional equation that prescribes how to differentiate a general class of functionals of the GMC measure
with respect to intermittency. A repeated application of this rule leads to a perturbative expansion of the functional in intermittency
known as the intermittency expansion (or the high temperature expansion). The intermittency expansion is a renormalized expansion in the centered moments
of the total mass of the GMC measure. We have shown that the full intermittency expansion of the Mellin transform can be computed for the whole
class of GMC measures considered in this paper provided one knows the expansion of positive integer moments, which effectively solves
the renormalization problem at the level of the high temperature expansion. 
We have illustrated the theory with the case of the periodized Bacry-Muzy GMC measure on the circle. 
We have explicitly computed the intermittency expansion, summed it in closed form, and showed that the resulting
Mellin transform is the Mellin transform of the Morris integral probability distribution, which is then conjectured to be
the distribution of the total mass on the circle.  

We have formulated two versions of the intermittency differentiation rule and given two separate formal (exact at the level of formal 
power series) derivations. The first version is non-local, \emph{i.e.} involves the measure of a continuum of subsets of the given 
set due to the very strong stochastic dependence of the measure, and its derivation is based 
on repeated application of the Girsanov theorem. The second version is in the form of an infinite hierarchy of local equations.
Its derivation is based on the intermittency invariance
of the underlying gaussian field, which we formulated for a general gaussian process in this paper. 
This invariance is a technical device
that substitutes for the non-existent Markov property of the underlying gaussian field and allows one to derive a Feynman-Kac equation
for the distribution of the total mass by considering a stochastic flow in intermittency (as opposed to time in the classical framework
of diffusions). The intermittency invariance gives two ways of evaluating the limit of the flow, which results in the differentiation rule.
The first way is the backward Kolmogorov equation, the second way involves detailed analysis of certain infinite series expansions,
combined with a combinatorial property of the measure. While the first derivation is simpler, the second does not rely on the use
of the Girsanov theorem and so has a natural generalization to the total mass of infinitely divisible multiplicative chaos measures. 

The interest in the intermittency differentiation rule goes beyond the calculational aspect of the ensuing intermittency expansions.
We have noted that the rule can be formulated in the form of a hierarchy of 
three-term recurrences and also gives rise to a three-term recurrence for the integrands involved in the multiple integral
representation of the expansion coefficients. These recurrences are of independent interest and deserve an in-depth study
as they suggest a connection with the continuous fraction theory and raise the possibility of analyzing the
non-classical moment problem of GMC by their means. Similarly, the theory of intermittency expansions is also of independent
theoretical interest for it provides a framework for posing the moment problem and 
leads to a number of conjectured solutions 
that we proposed in the paper. 
Our main conjecture is that the intermittency expansion captures the distribution of the total mass uniquely. 
Aside from the uniqueness problem, we have presented several other conjectures and formulated a number of open
questions that we hope will help stimulate future research on GMC measures. 




\section*{Acknowledgments}

The author wishes to express gratitude to the participants of the workshop ``Extrema of Logarithmically Correlated Processes, Characteristic Polynomials, and the Riemann Zeta Function'', Bristol, May 2016, for
drawing our interest to this project. 

\appendix
\section{A Review of the GMC on the Interval}
\noindent The construction of $\omega_{\mu, \varepsilon}(s)$ is based on the idea of using conical sets as in Bacry and Muzy \cite{BM} and modifying the intensity measure to match
the desired covariance. The conical sets live on the upper half-plane in the case of the boundary condition in Eq. \eqref{boundline} 
and on the torus in the case of Eq. \eqref{boundcircle}. 

The starting point is a gaussian independently scattered random measure
$P$ on the time-scale plane $\mathbb{H}_+=\{(t, \, l), \, \, l>0\},$
distributed uniformly with respect to some positive intensity measure $\rho.$
This means that $P(A)$ is a gaussian random variable for measurable subsets
$A\subset\mathbb{H}_+.$  The property of being independently
scattered means that $P(A)$ and $P(B)$ are independent if $A$ and
$B$ do not intersect. Uniform distribution with respect to $\rho$
means that the characteristic function of $P(A)$ is given by
\begin{equation}
{\bf{E}} \left[ e^{i q P(A)} \right]=e^{\mu\phi(q)\rho(A)},
\,\,\,q\in\mathbb{R},
\end{equation}
where $\mu>0$ is the intermittency parameter and $\phi(q)$ is the logarithm of the characteristic
function of the underlying gaussian distribution and is given by 
\begin{equation}
\phi(q) = -i\frac{q}{2}-\frac{q^2}{2}.
\end{equation}
The mean is chosen in such a way that
\begin{equation}
\phi(-i)=0 \,\,\, \text{so that}\,\,\,{\bf E}
\bigl[e^{P(A)}\bigr]=1\,\,\forall A\subset\mathbb{H}_+,
\end{equation}
which gives rise to the martingale property of the limit measure.
The existence of such random measures is established in \cite{RajRos}.
Next, following Bacry and Muzy \cite{BM}, we introduce special
conical sets $\mathcal{A}_{\varepsilon}(u)$ in the time-scale
plane defined by
\begin{equation}
\mathcal{A}_{\varepsilon}(u) = \left\{(t,l) \,\,\Big\vert\,\,
|t-u|\leq\frac{l}{2} \,\,\text{for}\,\,\varepsilon\leq l\leq 1
\,\,\text{and}\,\,|t-u|\leq\frac{1}{2}\,\,\text{for}\,\,l\geq 
1\right\}.
\end{equation}
The last
preparatory step is to define a family of gaussian processes with
dependent increments $\omega_{\mu, \varepsilon}(u)$ by
\begin{equation}\label{omegadef}
\omega_{\mu, \varepsilon}(u) =
P\left(\mathcal{A}_{\varepsilon}(u)\right).
\end{equation}
It is clear that $\omega_{\mu, \varepsilon}(u)$ and $\omega_{\mu,
\varepsilon}(v)$ are dependent in general if $|u-v|<1$ and are
independent otherwise so this case corresponds to $r(1)=1.$ 

We now wish to choose the intensity measure $\rho(dt\,dl)$ in such a way
that the process $\omega_{\mu, \varepsilon}(u)$ has the covariance given in Eq. \eqref{covr}.
We make the ansatz
\begin{equation}\label{ansatz}
\rho(dt \, dl)=\frac{f(l)}{l^2}\,dt\,dl.
\end{equation}

\begin{lemma}
Define $f(l)$ by 
\begin{gather}
\frac{f(l)}{l^2} = -\frac{d^2}{dl^2} \log r(l), \; l\in (0, 1). \label{fr} \\
f(l) = \frac{d}{dz}\Big\vert_{z=1} \log r(z),\;  l\geq 1.\label{fr2}
\end{gather}
and assume that these quantities are positive. Then, the process $\omega_{\mu, \varepsilon}(u)$ in 
Eq. \eqref{omegadef} with the intensity measure defined in Eq. \eqref{ansatz} has the covariance specified in Eq. \eqref{covr}.
\end{lemma}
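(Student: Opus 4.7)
The plan is to exploit the fact that $P$ is a gaussian independently scattered random measure with intensity $\mu\rho,$ so that for any measurable $A,B\subset\mathbb{H}_+,$
\begin{equation*}
\mathrm{Cov}\bigl[P(A),P(B)\bigr] = \mu\,\rho(A\cap B).
\end{equation*}
Applied with $A=\mathcal{A}_\varepsilon(s)$ and $B=\mathcal{A}_\varepsilon(t)$ this reduces the lemma to the purely geometric problem of computing $\rho\bigl(\mathcal{A}_\varepsilon(s)\cap \mathcal{A}_\varepsilon(t)\bigr)$ and matching it, for each of the two ranges of $|s-t|,$ with the expression prescribed in \eqref{covr} divided by $\mu.$

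The next step is to slice the intersection by the scale variable $l.$ At height $l$ the horizontal section of the cone $\mathcal{A}_\varepsilon(u)$ is an interval of length $\min(l,1)$ centered at $u$ when $l\geq \varepsilon,$ and empty otherwise. Hence the section of $\mathcal{A}_\varepsilon(s)\cap\mathcal{A}_\varepsilon(t)$ has length $\max\bigl(0,\min(l,1)-|s-t|\bigr).$ Writing $\delta\triangleq|s-t|$ and $\psi(l)\triangleq-\log r(l),$ the intensity ansatz \eqref{ansatz} together with \eqref{fr}--\eqref{fr2} gives
\begin{equation*}
\rho\bigl(\mathcal{A}_\varepsilon(s)\cap\mathcal{A}_\varepsilon(t)\bigr) = \int_{\max(\varepsilon,\delta)}^{1}(l-\delta)\,\psi''(l)\,dl \,+\, \int_{1}^{\infty}(1-\delta)\,\frac{f(1)}{l^{2}}\,dl,
\end{equation*}
where we use $\psi''(l)=f(l)/l^{2}$ on $(0,1)$ and the constant value $f(l)\equiv f(1)$ on $[1,\infty).$

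The core of the calculation is a single integration by parts in the first integral, with $u=l-\delta$ and $dv=\psi''(l)\,dl.$ The upper boundary term $(1-\delta)\psi'(1)$ is cancelled exactly by the second integral, since \eqref{fr2} is precisely the statement $f(1)=-\psi'(1),$ giving $\int_{1}^{\infty}(1-\delta)f(1)l^{-2}dl=-(1-\delta)\psi'(1).$ In the regime $\varepsilon\leq\delta<1$ the lower boundary term vanishes and what remains collapses to $\psi(\delta)-\psi(1)=-\log r(\delta)$ thanks to $r(1)=1,$ matching the first branch of \eqref{covr}. In the regime $\delta\leq\varepsilon,$ the lower boundary contributes $-(\varepsilon-\delta)\psi'(\varepsilon),$ which after rewriting $\psi'(\varepsilon)=-\frac{d}{d\varepsilon}\log r(\varepsilon)$ yields exactly $-\log r(\varepsilon)+(1-\delta/\varepsilon)\,\varepsilon\,\frac{d}{d\varepsilon}\log r(\varepsilon),$ i.e.\ the second branch of \eqref{covr}.

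The main obstacle, and the reason the precise formula \eqref{fr2} is needed, is the bookkeeping of the two boundary terms at $l=1$: the exact cancellation between the IBP contribution and the large-scale integral pins down $f(l\geq 1)$ uniquely, and this is also what ties the two pieces of the ansatz together. A secondary verification, needed for the construction of $P$ in the sense of \cite{RajRos}, is that $\rho$ is a bona fide positive measure; under the lemma's positivity assumption on the quantities in \eqref{fr} and \eqref{fr2} (which is the content of the footnote following \eqref{boundcircle}) this is automatic. For the circle boundary condition \eqref{boundcircle} the cones $\mathcal{A}_\varepsilon(u)$ are instead built on the torus, which adds a second, symmetric slice to the intersection once $|s-t|>1/2$; the symmetry $r(t)=r(1-t)$ guarantees that this extra slice reproduces the local contribution with $\delta$ replaced by $1-\delta,$ so the preceding argument applies on each period separately, yielding the same covariance in the corresponding range $\varepsilon\leq|s-t|\leq 1-\varepsilon.$
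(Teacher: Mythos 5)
Your proposal is correct and follows essentially the same route as the paper: both reduce the claim to computing the $\rho$-measure of $\mathcal{A}_\varepsilon(s)\cap\mathcal{A}_\varepsilon(t)$ by slicing in the scale variable and then matching the result with Eq.~\eqref{covr} in the two regimes $\varepsilon\leq|s-t|<1$ and $|s-t|\leq\varepsilon$. The only difference is cosmetic — you perform a single integration by parts where the paper packages the same algebra into the auxiliary function $g(z)=\int_z^1 f(l)l^{-2}\,dl$ and the identities \eqref{gcomputed}--\eqref{gintegralcomputed} — and your boundary-term bookkeeping at $l=1$ and $l=\varepsilon$ reproduces the paper's cancellations exactly.
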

Note that for $r(l)=l$ we recover the original result of Bacry and Muzy, namely,
\begin{equation}
f(l)=1.
\end{equation}
\begin{proof}
We need to compute the $\rho$ measure of the 
intersection of $\mathcal{A}_{\varepsilon}(u)$ and $\mathcal{A}_{\varepsilon}(v).$ 
Defining
\begin{equation}
g(z)=\int_z^1 \frac{f(l)}{l^2} dl,
\end{equation}
it is easy to show that we have the identity
\begin{equation}\label{gintegral}
\int_z^1 g(x)\,dx = \int_z^1 \frac{f(l)}{l^2}(l-z) dl.
\end{equation}
Using the definition of $f(l)$ in Eq. \eqref{fr} and Eqs. \eqref{fr2} and \eqref{boundline}, we have the additional identities
\begin{align}
g(z) & = -\frac{d}{dx}\Big\vert_{x=1} \log r(x) + \frac{d}{dz} \log r(z), \label{gcomputed}\\
\int_z^1 g(x)\,dx & = -(1-z)\frac{d}{dl}\Big\vert_{l=1} \log r(l) - \log r(z). \label{gintegralcomputed}
\end{align}
Assume first that $\varepsilon\leq|u-v|<1.$ Let $z=v-u,$ $u<v.$ Then, for $z\in[\varepsilon,\,1],$
\begin{equation}\label{totalarea}
\rho\Bigl(\mathcal{A}_{\varepsilon}(u)\cap \mathcal{A}_{\varepsilon}(v)\Bigr) = (1-z)\int_1^\infty \frac{f(l)}{l^2} dl +
\int_z^1 \frac{f(l)}{l^2}(l-z) dl.
\end{equation}
It follows from Eqs. \eqref{fr2}, \eqref{gintegral}, and \eqref{gintegralcomputed}, that the $\rho$ measure of the intersection equals $- \log r(z)$ as desired. 

Now, we need to consider the case of $z\leq \varepsilon.$ The $\rho$ measure of the intersection is
\begin{equation}
\rho\Bigl(\mathcal{A}_{\varepsilon}(u)\cap \mathcal{A}_{\varepsilon}(v)\Bigr) = (1-z)\int_1^\infty \frac{f(l)}{l^2} dl +
\int_\varepsilon^1 \frac{f(l)}{l^2}(l-z) dl.
\end{equation}
Using Eqs. \eqref{fr2} and \eqref{gintegral}, we can write for the $\rho$ measure of the intersection,
\begin{equation}
\rho\Bigl(\mathcal{A}_{\varepsilon}(u)\cap \mathcal{A}_{\varepsilon}(v)\Bigr) = \int_\varepsilon^1 g(x)\,dx
+(\varepsilon- z) g(\varepsilon) + (1-z) \frac{d}{dl}\Big\vert_{l=1} \log r(l).
\end{equation}
Finally, substituting Eqs. \eqref{gcomputed} and \eqref{gintegralcomputed} and noticing several cancellations, we get
\begin{equation}
\rho\Bigl(\mathcal{A}_{\varepsilon}(u)\cap \mathcal{A}_{\varepsilon}(v)\Bigr) = - \log r(\varepsilon) + \bigl(1-\frac{z}{\varepsilon}\bigr) \varepsilon\frac{d}{dl}\Big\vert_{l=\varepsilon} \log r(l).
\end{equation}
Hence, we arrive at the desired form of covariance in Eq. \eqref{covr}.  
\qed
\end{proof}

\section{Derivation of the Intermittency Differentiation Rule}
\noindent
In this section we will give a derivation of Eq. \eqref{therulen} that does not rely on the use of the Girsanov theorem. 
While admittedly more involved, the interest in this approach has to do with the fact that it naturally generalizes to
the infinitely divisible case, cf. \cite{Me17}, in which case the use of the Girsanov theorem appears to be significantly more
difficult. 

The derivation of Eq. \eqref{therulen} from first principles is essentially based on the \emph{intermittency invariance} of the underlying
gaussian field $\omega_{\mu, \varepsilon}(x),$ see \cite{Me2}, \cite{Me3}, and \cite{Me5}, \cite{Me17} for an extension to the general infinitely divisible field. 
Recall that the covariance of $\omega_{\mu, \varepsilon}(x)$ is of the form   $\mu\,g_\varepsilon(x, \,y),$ where
\begin{equation}
g_\varepsilon(x, \,y) =  \bigl(\theta_\varepsilon\star g\bigr)(x-y),
\end{equation}
cf. Eq. \eqref{gtheta}, for some singular positive definite function $g(x)$ such as in Eq. \eqref{glog}.
For our purposes in this section, it is convenient to define another gaussian field by
\begin{equation}
\omega_{\mu, L, \varepsilon}(x)\triangleq 
\omega_{\mu, \varepsilon}(x) + \mathcal{N}(-(\mu/2)\,\log L,\, \mu\log L),
\end{equation}
where $L\geq 1$ and $\mathcal{N}(-(\mu/2) \log L, \mu\log L)$ is an independent gaussian random variable with the mean
$-(\mu/2) \log L$ and variance $\mu\log L.$
It is obvious due to Eq. \eqref{bump} that the covariance of $\omega_{\mu, L, \varepsilon}(x)$ is of the same functional form as that
of $\omega_{\mu, \varepsilon}(x),$
\begin{align}
{\bf Cov}\bigl( \omega_{\mu, L, \varepsilon}(x), \, \omega_{\mu, L, \varepsilon}(y)\bigr) = & \mu\, \bigl(\theta_\varepsilon\star g_L\bigr)(x-y), 
\label{covgen} \\
g_L(x) = & g(x) + \log L.
\end{align}
\begin{lemma}[Intermittency Invariance]\label{keyinvlem}
Let $B(\delta)$ be the Brownian motion with drift $-\delta/2$ independent of
$\omega_{\mu,L,\varepsilon}(x).$ Then, we have the equality of gaussian processes in law,
\begin{equation}\label{interminvariance}
B(\delta)+\omega_{\mu,L,\varepsilon}(x)=\omega_{\mu-\delta,L,\varepsilon}(x)+
{\bar \omega}_{\delta,Le,\varepsilon}(x),
\end{equation}
which are viewed as random functions of $x$ 
at fixed
$0<\delta<\mu$ and $\varepsilon.$  ${\bar \omega}_{\delta,Le,\varepsilon}(x)$ denotes an
independent copy of the $\omega_{\mu,L,\varepsilon}(x)$ process at the
intermittency parameter $\delta$ and $L$ replaced with $Le,$ $e=2.718\cdots.$
\end{lemma}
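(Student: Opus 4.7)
The plan is to prove equality in law by the standard route for Gaussian processes: both sides of \eqref{interminvariance} are centered-plus-drift Gaussian families indexed by $x$, so it is enough to match their (finite-dimensional) means and covariance functions. The key combinatorial identity driving the statement is simply $\log(Le) = \log L + 1$, together with the linearity of the map $L \mapsto g_L = g + \log L$ in the representation of the covariance in \eqref{covgen}.

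First I would unpack the three ingredients on both sides. On the left, $B(\delta)$ contributes mean $-\delta/2$, variance $\delta$, and constant covariance $\delta$ across all pairs $(x,y)$; $\omega_{\mu,L,\varepsilon}(x)$ has covariance $\mu(\theta_\varepsilon\star g)(x-y)+\mu\log L$ and mean $-\tfrac{1}{2}{\bf Var}\,\omega_{\mu,L,\varepsilon}(x)$ by the normalization condition \eqref{mean}. On the right, the two independent summands contribute covariances
\begin{align*}
(\mu-\delta)(\theta_\varepsilon\star g)(x-y) + (\mu-\delta)\log L, \qquad \delta(\theta_\varepsilon\star g)(x-y) + \delta\log L + \delta,
\end{align*}
using $\log(Le)=\log L+1$ in the second term. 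Summing these by independence gives exactly $\mu(\theta_\varepsilon\star g)(x-y)+\mu\log L+\delta$, which matches the left-hand side covariance.

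Next I would match means. The right-hand side has mean
\begin{align*}
-\tfrac{\mu-\delta}{2}\bigl[(\theta_\varepsilon\star g)(0)+\log L\bigr] -\tfrac{\delta}{2}\bigl[(\theta_\varepsilon\star g)(0)+\log L+1\bigr] = -\tfrac{\mu}{2}(\theta_\varepsilon\star g_L)(0) - \tfrac{\delta}{2},
\end{align*}
which coincides with $-\tfrac{\delta}{2} - \tfrac{1}{2}{\bf Var}\,\omega_{\mu,L,\varepsilon}(x)$, i.e.\ the mean of the left-hand side. Finally I would invoke independence of $B(\delta)$, $\omega_{\mu-\delta,L,\varepsilon}$ and $\bar\omega_{\delta,Le,\varepsilon}$ to conclude that both sides are Gaussian processes with identical finite-dimensional distributions, hence equal in law.

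There is no real obstacle here: the content of the lemma is essentially that the two parameters $(\mu,\log L)$ combine additively in the covariance kernel, so shifting $\mu\mapsto \mu-\delta$ can be compensated by adding an independent field at intermittency $\delta$ with the $\log L$ parameter shifted by $1$, plus an independent Brownian increment that absorbs the uniform $\delta$ term. The only mildly subtle point is to check that the drift on $B(\delta)$ is precisely what is needed to preserve the mean-variance relation \eqref{mean} under the decomposition, which the calculation above confirms.
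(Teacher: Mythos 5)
Your proposal is correct and follows essentially the same route as the paper: both arguments reduce the equality in law to matching the means and covariance functions of the Gaussian processes on the two sides, using $\log(Le)=\log L+1$ so that the covariances $(\mu-\delta)(g_\varepsilon(x,y)+\log L)+\delta(g_\varepsilon(x,y)+\log L+1)$ sum to $\mu g_\varepsilon(x,y)+\mu\log L+\delta$, with the drift $-\delta/2$ and the normalization \eqref{mean} accounting for the means. Your write-up is, if anything, slightly more explicit about the mean computation than the paper's.
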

\begin{proof}
It is sufficient to compare the means and covariances of the gaussian processes on the left- and right-hand sides of this equation.
The covariance of the left-hand side of Eq. \eqref{interminvariance} is 
\begin{align}
{\bf{Cov}}\Bigl(B(\delta)+\omega_{\mu, L, \varepsilon}(x), \,B(\delta)+\omega_{\mu, L, \varepsilon}
(y)\Bigr) & = \delta+\mu\,g_\varepsilon(x,\,y)+\mu\log L.
\end{align}
The covariance of the right-hand side of Eq. \eqref{interminvariance} is
\begin{gather}
{\bf{Cov}}\Bigl(\omega_{\mu-\delta, L, \varepsilon}(x) + {\bar \omega}_{\delta,Le,\varepsilon}(x), \,\omega_{\mu-\delta, L, \varepsilon}
(y)+{\bar \omega}_{\delta,Le,\varepsilon}(y)\Bigr) =  \nonumber \\ 
(\mu-\delta) \, \bigl(g_\varepsilon(x,\,y)+\log L\bigr) + \delta\, \bigl(g_\varepsilon(x,\,y)+\log (Le)\bigr),
\end{gather}
so the covariances are the same. We have for the mean
\begin{align}
{\bf{E}} \bigl[\omega_{\mu, L, \varepsilon}(x)\bigr]   = & -\frac{1}{2} \,
{\bf Var} \bigl[\omega_{\mu, L, \varepsilon}(x)\bigr], \nonumber \\
= &  -\frac{1}{2} \,
{\bf Var} \bigl[\omega_{\mu, \varepsilon}(x)\bigr] -\frac{\mu}{2}\log L 
\end{align}
so the means are also the same.
\qed
\end{proof}

We need two additional lemmas. 
It is convenient to introduce the quantity $g_{L,\varepsilon}(x,y)$ by
\begin{equation}
\mu\,g_{L,\varepsilon}(x,y) = {\bf{Cov}}\bigl(\omega_{\mu, L, \varepsilon}(x), \,\omega_{\mu, L, \varepsilon}
(y)\bigr).
\end{equation}
\begin{lemma}\label{lemmaexpec}
Let $\mathfrak{f}(\delta,x)$ be an arbitrary continuous function
that vanishes as $\delta\rightarrow 0.$ Let
$\mathcal{B}(x)\triangleq
e^{\mathfrak{f}(\delta,x)+\omega_{\delta,L,\varepsilon}(x)}-1.$
Then, given any distinct $x_1, \cdots, x_k,$ as
$\delta\rightarrow 0,$
\begin{gather}
{\bf E}[\mathcal{B}(x_1)\mathcal{B}(x_2)]=
\bigl(e^{\mathfrak{f}(\delta,x_1)}-1\bigr)\bigl(e^{\mathfrak{f}(\delta,x_2)}-1\bigr)+\delta\,g_{L,\varepsilon}(x_1,x_2)+o(\delta),\\
{\bf E} [\mathcal{B}(x_1)\cdots\mathcal{B}(x_k)] =
\bigl(e^{\mathfrak{f}(\delta,x_1)}-1\bigr)\cdots\bigl(e^{\mathfrak{f}(\delta,x_k)}-1\bigr)+o(\delta),\label{B2}
\,\, k\neq 2.
\end{gather}
\end{lemma}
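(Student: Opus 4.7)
The plan is to expand $\mathcal{B}(x_1)\cdots\mathcal{B}(x_k)$ by inclusion--exclusion and then apply the gaussian exponential moment formula. Abbreviating $\mathfrak{f}_i \triangleq \mathfrak{f}(\delta, x_i)$ and $\omega_i \triangleq \omega_{\delta, L, \varepsilon}(x_i)$, I would write
\[
\prod_{i=1}^k \mathcal{B}(x_i) = \sum_{S \subseteq \{1,\ldots,k\}} (-1)^{k-|S|} \prod_{i \in S} e^{\mathfrak{f}_i + \omega_i},
\]
and, because $\omega_{\delta, L, \varepsilon}$ is normalized in the sense of Eq. \eqref{mean}, exploit the identity
\[
{\bf E}\Bigl[\prod_{i \in S} e^{\omega_i}\Bigr] = \exp\Bigl(\delta \sum_{\substack{i<j \\ i,j \in S}} g_{L,\varepsilon}(x_i, x_j)\Bigr).
\]

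Next I would expand this exponential to first order in $\delta$, decomposing the resulting expectation into a zeroth-order piece, a $\delta$-correction, and an $O(\delta^2)$ remainder. The zeroth-order piece $\sum_S (-1)^{k-|S|} \prod_{i \in S} e^{\mathfrak{f}_i}$ collapses by the standard inclusion--exclusion identity to $\prod_{i=1}^k (e^{\mathfrak{f}_i} - 1)$, producing the ``product'' term appearing on the right-hand side of both claimed formulas. For the first-order correction I would swap the sums over $S$ and over pairs $(i,j) \in S$; collecting all $S$ containing a fixed pair $\{i,j\}$ and applying inclusion--exclusion on the complementary index set yields
\[
\delta \sum_{i<j} e^{\mathfrak{f}_i + \mathfrak{f}_j}\, g_{L,\varepsilon}(x_i, x_j) \prod_{l \neq i,j} (e^{\mathfrak{f}_l} - 1).
\]

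At this point the two cases split naturally. When $k = 2$ the product on the right is empty, so the $\delta$-correction becomes $\delta\, e^{\mathfrak{f}_1 + \mathfrak{f}_2}\, g_{L,\varepsilon}(x_1, x_2)$; since $\mathfrak{f}(\delta, x) \to 0$ by hypothesis, this is exactly $\delta\, g_{L,\varepsilon}(x_1, x_2) + o(\delta)$. When $k > 2$ at least one factor $(e^{\mathfrak{f}_l} - 1) = o(1)$ sits outside the chosen pair, so the entire first-order correction is $\delta \cdot o(1) = o(\delta)$ and only the zeroth-order product survives. The case $k = 1$ is trivial, as ${\bf E}[\mathcal{B}(x_1)] = e^{\mathfrak{f}_1} - 1$ holds exactly. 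The $O(\delta^2)$ remainder coming from truncating the exponential is $o(\delta)$ in every case, giving both parts of the lemma.

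The calculation is essentially bookkeeping, so no single step is difficult. The point to watch is that the hypothesis $\mathfrak{f}(\delta, x) \to 0$ is precisely what drives the dichotomy between $k = 2$ and $k \neq 2$: the excess factors $(e^{\mathfrak{f}_l} - 1)$ associated with indices beyond a fixed pair are what kill the $\delta$-correction for $k > 2$. Thus the lemma is really a structural statement that only pairwise covariances among the $\omega_i$ survive at order $\delta$, which is exactly what is needed to turn the intermittency flow in Lemma \ref{keyinvlem} into the two-point differentiation rule of Theorem \ref{theoremdiff}.
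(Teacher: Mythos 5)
Your proposal is correct and follows essentially the same route as the paper: inclusion--exclusion over subsets, the gaussian exponential moment identity, and a first-order expansion in $\delta$, with the zeroth-order term collapsing to $\prod_i(e^{\mathfrak{f}(\delta,x_i)}-1)$ and the $\delta$-term surviving only for $k=2$. The only cosmetic difference is that you kill the $k>2$ correction via the leftover $o(1)$ factors $(e^{\mathfrak{f}_l}-1)$, whereas the paper first drops the $e^{\mathfrak{f}}$ prefactors into the $o(\delta)$ remainder and then observes that the purely combinatorial alternating sum over pairs vanishes; both are valid and amount to the same cancellation.
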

\begin{proof}
Let $0\leq l\leq k,$ and $(p_1<\cdots <p_l)$ denote an increasing tuple of numbers from $1\cdots k$ of length $l$ 
and $\sum_{(p_1<\cdots < p_l)}$ denote the sum over all such $l-$tuples. It is easy to show that for any such
tuple 
\begin{equation}\label{auxexp}
{\bf E}\Bigl[
\exp\bigl(\sum\limits_{(p_1<\cdots <p_l)}\omega_{\delta, L, \varepsilon}(x_{p_i})\bigr)\Bigr]
= \exp\Bigl(\delta\,\sum\limits_{i<j}^l 
g_{L,\varepsilon}(x_{p_i},x_{p_j})\Bigr).
\end{equation}
Given $k$ distinct numbers, we have the algebraic identity
\begin{equation}\label{auxalg}
(a_1-1)\cdots (a_k-1) = \sum\limits_{l=0}^k (-1)^{k-l}
\sum\limits_{(p_1<\cdots <p_l)} \prod\limits_{i=1}^l a_{p_i},
\end{equation}
taking all empty sums to mean zero and empty products to mean one.
It is easily verified by induction. If we now expand the brackets on
the left-hand side of Eq. \eqref{B2} and make use of Eqs. \eqref{auxexp} and \eqref{auxalg},
we obtain
\begin{equation}
\sum\limits_{l=0}^k (-1)^{k-l} \sum\limits_{(p_1<\cdots <p_l)}
\exp\Bigl(\sum\limits_{i=1}^l \mathfrak{f}(\delta,x_{p_i})+\delta\sum_{i<j}^l g_{L,\varepsilon}(x_{p_i},x_{p_j})\Bigr).
\end{equation}
It remains to expand this expression in $\delta$ and recall that
$\mathfrak{f}(\delta,x)\rightarrow 0$ as $\delta\rightarrow 0$ by assumption. There
results
\begin{equation}\label{auxexp2}
\sum\limits_{l=0}^k (-1)^{k-l} \sum\limits_{(p_1<\cdots <p_l)}
e^{\sum\limits_{i=1}^l \mathfrak{f}(\delta,x_{p_i})} + \delta\sum\limits_{l=0}^k
(-1)^{k-l} \sum\limits_{(p_1<\cdots <p_l)}
\sum_{i<j}^l
g_{L,\varepsilon}(x_{p_i},x_{p_j})+o(\delta).
\end{equation}
By Eq. \eqref{auxalg}, the first term in Eq. \eqref{auxexp2} is exactly $\prod_{i=1}^k
\bigl(\exp(\mathfrak{f}(\delta,x_i))-1\bigr)$ that occurs on the right-hand side
of Eq. \eqref{B2}. It is not difficult to see that the second term in Eq. \eqref{auxexp2} equals $\delta \,g_{L,\varepsilon}(x_1,x_2)$ if $k=2$ and is zero
otherwise. \qed
\end{proof}
\begin{lemma}\label{lemmadiff}
Let 
$F(x)$ be smooth. 
Then, there holds the following identity
\begin{gather}
\frac{\partial}{\partial\mu} F\Bigl(z\int_\mathcal{D} e^{\omega_{\mu, L, \varepsilon}(x)}\,dx\Bigr)  \prod\limits_{i=1}^n e^{ \omega_{\mu,L,\varepsilon}(x_i) }  = 
-\lim\limits_{\delta\rightarrow 0} \frac{1}{\delta} \Bigl[\sum_{k=1}^\infty
F^{(k)}\Bigl(  z\int_\mathcal{D} e^{\omega_{\mu, L, \varepsilon}(x)}\,dx   \Bigr) \times \nonumber \\
\times
\frac{z^k}{k!}  \Bigl(\int\limits_\mathcal{D} e^{\omega_{\mu, L, \varepsilon}(x)}
\bigl(e^{\mathcal{A}_\varepsilon(x)}-1\bigr)\,dx\Bigr)^k  \sum\limits_{l=0}^n \sum\limits_{(p_1<\cdots <p_l)} \prod\limits_{i=1}^l \bigl(e^{\mathcal{A}_\varepsilon(x_{p_i})}-1\bigr) + \nonumber \\
+ F\Bigl(  z\int_\mathcal{D} e^{\omega_{\mu, L, \varepsilon}(x)}\,dx   \Bigr) \sum\limits_{l=1}^n \sum\limits_{(p_1<\cdots <p_l)} \prod\limits_{i=1}^l \bigl(e^{\mathcal{A}_\varepsilon(x_{p_i})}-1\bigr)
\Bigr] \prod\limits_{i=1}^n e^{ \omega_{\mu,L,\varepsilon}(x_i) },
\end{gather}
where
\begin{equation}\label{intermeq5}
\mathcal{A}_\varepsilon(x) \triangleq
\omega_{\mu-\delta,L,\varepsilon}(x)-\omega_{\mu,L,\varepsilon}(x).
\end{equation}
\end{lemma}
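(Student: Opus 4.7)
The plan is to use the definition of the partial derivative together with the additive structure $\omega_{\mu-\delta,L,\varepsilon}(x) = \omega_{\mu,L,\varepsilon}(x) + \mathcal{A}_\varepsilon(x)$ that follows directly from Eq. \eqref{intermeq5}. Writing
\begin{equation*}
\frac{\partial}{\partial\mu} G(\mu) = -\lim_{\delta\to 0}\frac{G(\mu-\delta)-G(\mu)}{\delta},
\end{equation*}
applied to $G(\mu) = F\bigl(z\int_\mathcal{D} e^{\omega_{\mu,L,\varepsilon}(x)}\,dx\bigr)\prod_{i=1}^n e^{\omega_{\mu,L,\varepsilon}(x_i)}$, the entire content of the lemma is to reorganize $G(\mu-\delta)-G(\mu)$ into the two sums on the right-hand side. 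No probabilistic input (Girsanov, intermittency invariance) is needed at this stage — the identity is purely algebraic, with the limit being the only analytic ingredient.

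First I would use the factorization $e^{\omega_{\mu-\delta,L,\varepsilon}(x)} = e^{\omega_{\mu,L,\varepsilon}(x)}\bigl(1 + (e^{\mathcal{A}_\varepsilon(x)}-1)\bigr)$ in two places. For the integral inside $F$, this gives
\begin{equation*}
z\int_\mathcal{D} e^{\omega_{\mu-\delta,L,\varepsilon}(x)}\,dx = z\int_\mathcal{D} e^{\omega_{\mu,L,\varepsilon}(x)}\,dx + z\int_\mathcal{D} e^{\omega_{\mu,L,\varepsilon}(x)}\bigl(e^{\mathcal{A}_\varepsilon(x)}-1\bigr)\,dx,
\end{equation*}
so that Taylor expanding $F$ around $z\int_\mathcal{D} e^{\omega_{\mu,L,\varepsilon}(x)}\,dx$ produces the $F^{(k)}$ terms with their $k$th powers of the perturbation integral, weighted by $z^k/k!$. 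For the external product, I would apply the binomial-style expansion
\begin{equation*}
\prod_{i=1}^n e^{\omega_{\mu-\delta,L,\varepsilon}(x_i)} = \prod_{i=1}^n e^{\omega_{\mu,L,\varepsilon}(x_i)}\,\sum_{l=0}^n \sum_{(p_1<\cdots<p_l)} \prod_{i=1}^l \bigl(e^{\mathcal{A}_\varepsilon(x_{p_i})}-1\bigr),
\end{equation*}
which is just the identity $\prod_i(1+a_i)=\sum_{l}\sum_{(p_1<\cdots<p_l)}\prod_i a_{p_i}$.

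Multiplying the two expansions and subtracting the $(k=0,\,l=0)$ term, which reproduces $G(\mu)$, gives $G(\mu-\delta)-G(\mu)$ as the sum of the two pieces appearing in the lemma: the piece with $k\geq 1$ and $0\leq l\leq n$ coming from the $F^{(k)}$ contributions (the $l=0$ case included because the external product survives even when no $x_i$ is touched), and the piece with $k=0$ and $l\geq 1$ where $F$ itself is unchanged but at least one factor in the product is perturbed. Dividing by $\delta$ and inserting the minus sign from the difference-quotient representation of $\partial_\mu$ yields the stated formula.

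The one subtle point — the main obstacle in making this rigorous rather than formal — is the justification of interchanging the $\delta\to 0$ limit with the infinite sum over $k$ and the spatial integrals. Since the paper explicitly works at the level of formal power series (and the subsequent application combines this with Lemma \ref{lemmaexpec} under expectation, where only finitely many terms survive to leading order in $\delta$), I would simply treat the identity as a formal expansion and defer convergence issues to the downstream application, noting that the $k$th term scales like $\delta^k$ through $\bigl(e^{\mathcal{A}_\varepsilon}-1\bigr)^k$ once an expectation is taken via Lemma \ref{lemmaexpec}.
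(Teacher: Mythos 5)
Your proposal is correct and takes essentially the same route as the paper: the paper's own (very terse) proof likewise writes $\int_\mathcal{D} e^{\omega_{\mu-\delta,L,\varepsilon}(x)}\,dx$ as the unperturbed integral plus the perturbation $\int_\mathcal{D} e^{\omega_{\mu,L,\varepsilon}(x)}\bigl(e^{\mathcal{A}_\varepsilon(x)}-1\bigr)dx$ and Taylor expands $F$ in that small parameter, leaving the expansion of the external product $\prod_i\bigl(1+(e^{\mathcal{A}_\varepsilon(x_i)}-1)\bigr)$ and the difference-quotient sign convention implicit. Your write-up just makes those two bookkeeping steps explicit, which matches the stated formula term by term.
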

\begin{proof}
The result follows from representing
\begin{equation}
\int_\mathcal{D} e^{\omega_{\mu-\delta, L, \varepsilon}(x)}\,dx = 
\int_\mathcal{D} e^{\omega_{\mu, L, \varepsilon}(s)}\,dx + 
\int_\mathcal{D} e^{\omega_{\mu, L, \varepsilon}(x)}\,\bigl( e^{\mathcal{A}_\varepsilon(x)}-1 \bigr)dx,
\end{equation}
and Taylor expanding in the ``small" parameter \[\int_\mathcal{D} e^{\omega_{\mu, L, \varepsilon}(x)}
\bigl(e^{\mathcal{A}_\varepsilon(x)}-1\bigr) dx\] that vanishes as
$\delta\rightarrow 0.$ \qed
\end{proof}
We can now give a formal derivation of Theorem \ref{theoremdiff} in the form of Eq. \eqref{therulen}.
\begin{proof}
The main idea of the proof is to consider a stochastic flow and derive the corresponding
Feynman-Kac equation regarding intermittency as time.
We can assume $\varphi(x)=1$ without any loss of generality. 
Let 
\begin{equation}
M_{\mu, \varepsilon} = \int_\mathcal{D} e^{\omega_{\mu,
\varepsilon}(x)} dx.
\end{equation}
The starting point is the limit
\begin{equation}\label{thelim}
A\triangleq \frac{\partial}{\partial\delta}\Big\vert_{\delta=0}\,\,
{\bf{E}^*} \left[{\bf{E}} \Bigl[ F\bigl(ze^{B(\delta)} \,M_{\mu, \varepsilon}\bigr)\bigl(ze^{B(\delta)}\bigr)^n\,e^{ \omega_{\mu,\varepsilon}(x_1)+\cdots+\omega_{\mu,\varepsilon}(x_n)}\Bigr]\right],
\end{equation}
where $B(\delta)$ is the Brownian motion with drift $-\delta/2$ independent of
$\omega_{\mu,\varepsilon}(x),$ 
and the star is used to distinguish the expectation with respect to $B(\delta)$ from that with respect to
$\omega_{\mu,\varepsilon}(x).$ By the backward Kolmogorov
equation, we have
\begin{align}
A = &\frac{1}{2} \left[- \frac{\partial}{\partial x}  +\frac{\partial^2}{\partial x^2} \right] \Big\vert_{x=0}\,\,
{\bf{E}} \Bigl[ F\bigl(ze^{x} \,M_{\mu, \varepsilon}\bigr)\bigl(ze^{x}\bigr)^n\,e^{ \omega_{\mu,\varepsilon}(x_1)+\cdots+\omega_{\mu,\varepsilon}(x_n)}\Bigr], \nonumber \\
 = &  \frac{1}{2} z^2\frac{\partial^2}{\partial
z^2}\, {\bf{E}} \Bigl[ F\bigl(z \,M_{\mu, \varepsilon}\bigr)z^n\,e^{ \omega_{\mu,\varepsilon}(x_1)+\cdots+\omega_{\mu,\varepsilon}(x_n)}\Bigr], \nonumber \\
= & \frac{1}{2} z^{n+2} \, {\bf{E}} \Bigl[ F^{(2)}\bigl(z \,M_{\mu, \varepsilon}\bigr) M^2_{\mu, \varepsilon}\,e^{ \omega_{\mu,\varepsilon}(x_1)+\cdots+\omega_{\mu,\varepsilon}(x_n)}\Bigr] + \nonumber \\
 &  + n z^{n+1} \, {\bf{E}} \Bigl[ F^{(1)}\bigl(z \,M_{\mu, \varepsilon}\bigr) M_{\mu, \varepsilon}\,e^{ \omega_{\mu,\varepsilon}(x_1)+\cdots+\omega_{\mu,\varepsilon}(x_n)}\Bigr]+ \nonumber \\
 & + \frac{1}{2} n(n-1)z^n {\bf{E}} \Bigl[ F\bigl(z \,M_{\mu, \varepsilon}\bigr) \,e^{ \omega_{\mu,\varepsilon}(x_1)+\cdots+\omega_{\mu,\varepsilon}(x_n)}\Bigr]
. \label{firsteq}
\end{align}

On the other hand, this limit can be computed in a different way.
By Lemma \ref{keyinvlem}, there holds the following equality in law 
\begin{equation}
F\Bigl(z e^{B(\delta)} M_{\mu, \varepsilon}\Bigr) \prod\limits_{i=1}^n e^{ \omega_{\mu, \varepsilon}(x_i) + B(\delta)} 
=  F\Bigl(z\int_\mathcal{D} e^{\omega_{\mu-\delta,\varepsilon}(x)+{\bar
\omega}_{\delta,e,\varepsilon}(x)} dx\Bigr) 
\prod\limits_{i=1}^n e^{ \omega_{\mu-\delta,\varepsilon}(x_i)+{\bar
\omega}_{\delta,e,\varepsilon}(x_i) }.\label{zidentity}
\end{equation}
Thus, to compute the limit in Eq. \eqref{thelim}, we need to expand
\begin{gather}
{\bf E}^*\left[{\bf E}\Bigl[F\Bigl(z\int_\mathcal{D} e^{\omega_{\mu-\delta,\varepsilon}(x)+{\bar
\omega}_{\delta,e,\varepsilon}(x)} dx\Bigr)  \prod\limits_{i=1}^n e^{ \omega_{\mu-\delta,\varepsilon}(x_i)+{\bar
\omega}_{\delta,e,\varepsilon}(x_i) }\Bigr]\right]  \label{intermeq4} 
\end{gather}
in $\delta$ up to $o(\delta)$ terms. The star now indicates the expectation with respect to
${\bar
\omega}_{\delta,e,\varepsilon}(x),$ which is independent of $\omega_{\mu-\delta,\varepsilon}(x)$ by construction. 
Let $\mathcal{A}_\varepsilon(x)
\triangleq
\omega_{\mu-\delta,\varepsilon}(x)-\omega_{\mu,\varepsilon}(x)$
as in Eq. \eqref{intermeq5} and
\begin{gather}
\bar{\mathcal{A}}_\varepsilon(x)\triangleq  {\bar
\omega}_{\delta,e,\varepsilon}(x), \\
\mathcal{C}\triangleq \int_\mathcal{D} e^{\omega_{\mu,\varepsilon}(x)}
\bigl(e^{\mathcal{A}_\varepsilon(x)+\bar{\mathcal{A}}_\varepsilon(x)}-1\bigr)\,dx.
\end{gather}
While we do not know how to expand these quantities 
in $\delta,$ they all clearly vanish as $\delta\rightarrow 0$ so 
we can write
\begin{gather}
F\Bigl(z\int_\mathcal{D} e^{\omega_{\mu-\delta,\varepsilon}(x)+{\bar
\omega}_{\delta,e,\varepsilon}(x)} dx\Bigr) = \sum\limits_{l=0}^\infty \frac{z^l}{l!} F^{(l)}(z  M_{\mu, \varepsilon}) \,\mathcal{C}^l, \\
\prod\limits_{i=1}^n e^{ \omega_{\mu-\delta,\varepsilon}(x_i)+{\bar
\omega}_{\delta,e,\varepsilon}(x_i) } = \prod\limits_{i=1}^n e^{ \omega_{\mu,\varepsilon}(x_i) } 
\prod\limits_{i=1}^n \Bigl(1+ \bigl(e^{ \mathcal{A}_\varepsilon(x_i) + \bar{\mathcal{A}}_\varepsilon(x_i) }-1\bigr)\Bigr).
\end{gather}
It
follows that the expression in Eq. \eqref{intermeq4} can be written as
\begin{gather}
{\bf E}^*\left[{\bf E}\Bigl[ \sum\limits_{l=0}^\infty \frac{z^l}{l!} F^{(l)}(z  M_{\mu, \varepsilon}) \,\mathcal{C}^l \prod\limits_{i=1}^n 
e^{ \omega_{\mu,\varepsilon}(x_i) } 
\prod\limits_{i=1}^n \Bigl(1+ \bigl(e^{ \mathcal{A}_\varepsilon(x_i) + \bar{\mathcal{A}}_\varepsilon(x_i) }-1\bigr)\Bigr) \Bigr]\right]. \label{intermeq3}
\end{gather}
The advantage of this representation is that the only
$\bar{\omega}_\varepsilon$ dependence is in
$\bar{\mathcal{A}}_\varepsilon(s).$ This allows us to compute the
$\bf{E}^*$ expectation in Eq. \eqref{intermeq4}. Indeed, Eq. \eqref{intermeq4} entails two
expectations: the $\bf{E}$ with respect to $\omega_\varepsilon$
process 
and the $\bf{E}^*$ expectation with respect to
$\bar{\omega}_\varepsilon$ process. Interchanging their order, it
follows from Eq. \eqref{intermeq3} that computing the $\bf{E}^*$ expectation is
now reduced to computing 
\begin{equation}
{\bf E}^* \Bigl[ \mathcal{C}^l \, \prod\limits_{j=1}^{k} 
\bigl(e^{ \mathcal{A}_\varepsilon(x_{p_j}) + \bar{\mathcal{A}}_\varepsilon(x_{p_j}) }-1\bigr)\Bigr],
\end{equation}
where $(p_1<\cdots <p_k)$ denotes an increasing tuple of numbers from $1\cdots n$ of length $k,$ due to the algebraic
identity in Eq. \eqref{auxalg}.
As
$\mathcal{A}_\varepsilon(x)$ and $\bar{\mathcal{A}}_\varepsilon(x)$
are independent processes, it follows from Lemma \ref{lemmaexpec} applied to
$\mathcal{B}(s)=\exp\bigl(\mathcal{A}_\varepsilon(x)+\bar{\mathcal{A}}_\varepsilon(x)\bigr)-1$
with $\mathfrak{f}(\delta, x)\triangleq \mathcal{A}_\varepsilon(x)$
that we have the estimates 
\begin{gather}
{\bf E}^*[\mathcal{B}(x_1)\mathcal{B}(x_2)]=
\bigl(e^{\mathcal{A}_\varepsilon(x_1)}-1\bigr)\bigl(e^{\mathcal{A}_\varepsilon(x_2)}-1\bigr)+
\delta \,g_{e,\varepsilon}(x_1,x_2)+o(\delta), \\
{\bf E}^*[\mathcal{B}(x_1)\cdots\mathcal{B}(x_k)]=
\bigl(e^{\mathcal{A}_\varepsilon(x_1)}-1\bigr)\cdots\bigl(e^{\mathcal{A}_\varepsilon(x_k)}-1\bigr)+o(\delta),\,\,k\neq
2.
\end{gather}
Collecting what we have shown so far and using Lemma \ref{lemmadiff}, we obtain
\begin{align}
A= & -\frac{\partial}{\partial\mu} \Bigl[ F\bigl(  z M_{\mu, \varepsilon}  \bigr)  \prod\limits_{i=1}^n e^{ \omega_{\mu,\varepsilon}(x_i) } \Bigr] +\nonumber \\
& + \frac{z^{n+2}}{2}\int_{\mathcal{D}^2}{\bf E}\Bigl[ F^{(2)} (zM_{\mu, \varepsilon})  
e^{\omega_{\mu,\varepsilon}(y_1)+\omega_{\mu,\varepsilon}(y_2)}\prod\limits_{i=1}^n e^{ \omega_{\mu,\varepsilon}(x_i) } \Bigr]
\,g_{e,\varepsilon}(y_1,y_2)dy_1dy_2 +\nonumber\\
& + z^{n+1}\int_{\mathcal{D}}{\bf E}\Bigl[ F^{(1)} (zM_{\mu, \varepsilon})  e^{\omega_{\mu,\varepsilon}(y)}\prod\limits_{i=1}^n e^{ \omega_{\mu,\varepsilon}(x_i) } 
\Bigr]
\,\sum\limits_{k=1}^n g_{e,\varepsilon}(y,x_k)dy    +\nonumber\\
& + z^n\,{\bf E}\Bigl[ F(zM_{\mu, \varepsilon})  \prod\limits_{i=1}^n e^{ \omega_{\mu,\varepsilon}(x_i) } 
\Bigr]
\,\sum\limits_{l<k}^n g_{e,\varepsilon}(x_l,x_k).\label{intermeq1}
\end{align}
Finally, observing that
$g_{e,\varepsilon}(x_1,x_2)=1+g_{1,\varepsilon}(x_1,\,x_2)$ and
comparing the resulting expression for $A$ with that in Eq. \eqref{firsteq},
and then letting $z=1$ and $\varepsilon\rightarrow 0,$ we arrive at Eq. \eqref{therulen}. \qed
\end{proof}



\begin{thebibliography}{50}


\bibitem{Jones} K. Astala, P. Jones, A. Kupiainen, E. Saksman, Random conformal weldings, {\it Acta Mathematica} {\bf 207} (2011) 203--254.

\bibitem{B} N. Berestycki, An elementary approach to Gaussian multiplicative chaos, arXiv:1506.09113  [math.PR] 
(2015).

\bibitem{MRW} E. Bacry, J. Delour, J.-F. Muzy,
Multifractal random walk, {\it Phys. Rev. E} {\bf 64} (2001) 026103.

\bibitem{BM1} E. Bacry, J.-F. Muzy, Log-infinitely divisible multifractal random
walks, {\it Comm. Math. Phys.} {\bf 236}  (2003) 449--475.

\bibitem{Pulses} J. Barral, B. B. Mandelbrot, Multifractal products of
cylindrical pulses, {\it Prob. Theory Relat. Fields} {\bf 124}  (2002) 
409--430.

\bibitem{barralID} J. Barral, X. Jin, On exact scaling log-infinitely divisible cascades, {\it Probab. Theory Relat. Fields} {\bf 160}  (2014) 521-565.

\bibitem{barraletal} J. Barral, A. Kupiainen, M. Nikula, E. Saksman, C. Webb, Basic properties of critical lognormal multiplicative chaos, {\it  Ann. Probab.}
{\bf 43} (2015) 2205--2249.

\bibitem{BenSch} I. Benjamini, O. Schramm, KPZ in one dimensional random geometry
of multiplicative cascades, {\it Comm. Math. Phys.} {\bf 289} (2009) 653--662.

\bibitem{CRS} X. Cao, A. Rosso, R. Santachiara, Extreme value statistics of 2D Gaussian free field: effect 
of finite domains. \emph{J. Phys. A: Math. Theor.} {\bf 49} (2016) 02LT02. 

\bibitem{Chainais} P. Chainais, Multidimensional infinitely divisible cascades, {\it Eur. Phys. J. B} {\bf 51} (2006) 229--243.

\bibitem{Charal} C. Charalambides, \emph{Enumerative Combinatorics}, (Chapman and Hall/CRC, Boca Raton, 2002).

\bibitem{DS} B. Duplantier, S. Sheffield, Liouville quantum gravity and
KPZ, {\it Invent. Math. } {\bf 185}  (2011) 333--393.

\bibitem{dupluntieratal} B. Duplantier, R. Rhodes, S. Sheffield, V. Vargas, Critical gaussian multiplicative chaos: convergence of the derivative martingale, {\it Ann. Probab.} {\bf 42} (2014) 1769--1808.

\bibitem{ForresterBook} P. J. Forrester,  \emph{Log-Gases and Random Matrices}, (Princeton University Press, Princeton, 2010).

\bibitem{FyoBou} Y. V. Fyodorov and J. P. Bouchaud, Freezing and extreme-value statistics in a random energy model with logarithmically correlated potential, \emph{J. Phys. A, Math Theor.} \textbf{41}  (2008) 372001.

\bibitem{FLD} Y. V. Fyodorov, P. Le Doussal, Moments of the position of the maximum for GUE
characteristic polynomials and for log-correlated Gaussian processes, \emph{ J. Stat. Phys.} {\bf 164}  (2016) 190--240.

\bibitem{FLDR} Y. V. Fyodorov, P. Le Doussal, A. Rosso, Statistical mechanics of logarithmic REM:
duality, freezing and extreme value statistics of 1/f noises generated by gaussian free fields, {\it J. Stat. Mech.} (2009) P10005. 

\bibitem{FLDR2} Y. V. Fyodorov, P. Le Doussal, A. Rosso, Counting function fluctuations and extreme value threshold in multifractal patterns: the case study of an ideal $1/f$ noise. \emph{ J. Stat. Phys.} {\bf 149}  (2012) 898--920.


\bibitem{YK} Y. V. Fyodorov and J. P. Keating, Freezing transitions and extreme values: random matrix theory, $\zeta(1/2+it),$ and disordered landscapes, \emph{ Philos. Trans. R. Soc. Lond. Ser. A Math. Phys. Eng. Sci.} \textbf{372} (2014) 20120503. 

\bibitem{FKS} Y. V. Fyodorov, B. A. Khoruzhenko, and N. J. Simm, Fractional Brownian motion with Hurst index $H=0$
and the Gaussian Unitary Ensemble,  \emph{ Ann. Probab.}  {\bf 44} (2016) 2980--3031.


\bibitem{FyodSimm} Y. V. Fyodorov, N. J. Simm, On the distribution of maximum value of the characteristic
polynomial of GUE random matrices, {\it Nonlinearity} {\bf 29} (2016) 2837--2855.

\bibitem{JS} J. Junnila and E. Saksman, Uniqueness of critical Gaussian chaos, \emph{Electron. J. Probab.} \textbf{22} (2017), no. 11. 

\bibitem{K2} J.-P. Kahane, Positive martingales and random measures. {\it  Chinese Ann. Math. Ser. B} {\bf 8} (1987) 1--12.

\bibitem{Madmax} T. Madaule, Maximum of a log-correlated Gaussian field,  {\it Ann. Inst. H. Poincaré Probab. Statist.} {\bf 
 51} (2015) 1369--1431.

\bibitem{Mad} T.  Madaule,  R.  Rhodes,  V.  Vargas, Glassy  phase  and  freezing  of  log-correlated  Gaussian
potentials. {\it Ann. Appl. Probab.} {\bf 26} (2016) 643--690. 

\bibitem{secondface} B. B. Mandelbrot, Possible refinement of the log-normal hypothesis
concerning the distribution of energy dissipation in intermittent
turbulence, in {\it Statistical Models and Turbulence}, M.
Rosenblatt and C. Van Atta, eds., Lecture Notes in Physics {\bf 12},
(Springer, New York, 1972), p. 333.

\bibitem{Lan} B. B. Mandelbrot, Limit lognormal multifractal measures, in {\it Frontiers of
Physics: Landau Memorial Conference,} E. A. Gotsman {\it et al},
eds., (Pergamon, New York, 1990), p. 309.

\bibitem{BM} J.-F. Muzy, E. Bacry, Multifractal stationary random measures and multifractal
random walks with log-infinitely divisible scaling laws. {\it Phys. Rev. E} {\bf 66}  (2002) 056121.

\bibitem{Me2} D. Ostrovsky, Functional Feynman-Kac equations for limit lognormal
multifractals, {\it J. Stat. Phys.} {\bf 127} (2007) 935--965.

\bibitem{Me3} D. Ostrovsky, Intermittency expansions for limit lognormal
multifractals, {\it Lett. Math. Phys.} {\bf 83} (2008) 265--280.

\bibitem{Me4} D. Ostrovsky, Mellin transform of the limit
lognormal distribution, {\it Comm. Math. Phys.} {\bf 288}  (2009) 287--310.

\bibitem{Me5} D. Ostrovsky, On the limit lognormal and other
limit log-infinitely divisible laws, {\it J. Stat. Phys.} {\bf 138} (2010) 
890--911.

\bibitem{Me6} D. Ostrovsky, On the stochastic dependence
structure of the limit lognormal process, {\it Rev. Math. Phys.}
{\bf 23}  (2011) 127--154.

\bibitem{MeIMRN} D. Ostrovsky, Selberg integral as a meromorphic
function, \emph{Int. Math. Res. Not. IMRN}, \textbf{17} (2013) 3988--4028.

\bibitem{Me13} D. Ostrovsky, Theory of Barnes beta distributions, \emph{Elect. Comm. in Prob.} \textbf{18}, no. 59 (2013) 1--16.

\bibitem{Me14} D. Ostrovsky, On Barnes beta distributions, Selberg integral and Riemann xi, \emph{Forum Math.}, \textbf{28} (2016) 1--23.

\bibitem{Menon} D. Ostrovsky, On Riemann zeroes, lognormal multiplicative chaos, and Selberg integral, \emph{Nonlinearity} {\bf 29} (2016)
426--464.

\bibitem{Me16} D. Ostrovsky, On Barnes beta distributions and applications to the maximum distribution of the 2D Gaussian Free Field, 
{\it J. Stat. Phys.} {\bf 164} (2016) 1292--1317.

\bibitem{MeLMP} D. Ostrovsky, A note on moments of limit log infinitely divisible stochastic measures of Bacry and Muzy, \emph{Lett. Math. Phys.}  
{\bf 107} (2017) 267--289.

\bibitem{Me17} D. Ostrovsky, A theory of intermittency differentiation of 1D infinitely divisible multiplicative chaos measures, arXiv:1612.05957 [math.PR] (2016). 


\bibitem{RV3} R. Rhodes, V. Vargas,  Multidimensional multifractal random measures, {\it Electron. J. Probab.} {\bf 15} (2010) 241--258.

\bibitem{RV1} R. Rhodes, V. Vargas, KPZ formula for log-infinitely divisible multifractal random measures, {\it ESAIM Probab. Stat.}  {\bf 15} (2011)
358--371. 

\bibitem{RV2} R.  Rhodes  and  V.  Vargas: Gaussian  multiplicative chaos  and  applications:  an  overview, {\it Probability Surveys} {\bf 11} (2014) 315--392.

\bibitem{RV} R. Robert, V. Vargas, Gaussian multiplicative chaos revisited, {\it Ann. Probab.} {\bf 38} (2010) 605--631.

\bibitem{RajRos}  B. S. Rajput, J. Rosinski, Spectral representations of infinitely divisible processes. {\it Probab. Theory Relat. Fields} {\bf 82} (1989) 451--487.

\bibitem{SW} 	E. Saksman, C. Webb, The Riemann zeta function and Gaussian multiplicative chaos: statistics on the critical line, 
arXiv:1609.00027 [math.PR] (2017).

\bibitem{Schmitt} F. Schmitt, D. Marsan, Stochastic equations generating continuous
multiplicative cascades, {\it Eur. J. Phys. B} {\bf 20} (2001) 3--6.

\bibitem{Shamov} A. Shamov, On Gaussian multiplicative chaos, \emph{J. Funct. Anal.} {\bf 270} (2016), 3224--3261.


\end{thebibliography}
\end{document}